\documentclass[smallcondensed, numbook, envcountsame]{svjour3}

\usepackage[a4paper, left=4cm, right=4cm, top=3cm, bottom=3cm]{geometry}
\usepackage{tkz-graph}
\usepackage{tkz-berge}
\usepackage{circuitikz}
\usetikzlibrary{arrows}
\usepackage{nicefrac}

\usepackage[latin1]{inputenc}
\usepackage{enumitem}
\usepackage{graphicx}

\usepackage{dsfont}
\usepackage{amssymb}
\usepackage{amsmath}

\usepackage[pdfborder={0 0 0}]{hyperref}

\allowdisplaybreaks[1] %allows pagebreak in align

\renewcommand{\hat}{\widehat}
\renewcommand{\tilde}{\widetilde}

\DeclareMathOperator*{\dom}{dom}
\newcommand{\menge}[1]{\left\lbrace #1 \right\rbrace}
\newcommand{\mengedef}[2]{\left\lbrace #1  ~ | ~  #2 \right\rbrace}
\newcommand{\all}{\ \forall\ }
\newcommand{\ex}{\ \exists\ }
\newcommand{\restr}{\mathord\restriction}
\newcommand{\Fin}{\mathcal{F}in}
\newcommand{\domE}{\dom \mathcal{E}}

\def\N{\mathbb N}
\def\Z{\mathbb Z}
\def\R{\mathbb R}
\def\E{\mathbb{E}}
\def\P{\mathbb{P}}
\def\F{\mathcal{F}}
\def\lb{\left(}
\def\rb{\right)}
\def\ls{\left[}
\def\rs{\right]}

\begin{document}

\spdefaulttheorem{dfn}{Definition}{\bf}{\rm}
\spdefaulttheorem{rem}{Remark}{\bf}{\rm}
\spdefaulttheorem{lem}{Lemma}{\bf}{\rm}
\spdefaulttheorem{prop}{Proposition}{\bf}{\rm}
\spdefaulttheorem{cor}{Corollary}{\bf}{\rm}
\spdefaulttheorem{thm}{Theorem}{\bf}{\rm}
\spdefaulttheorem{exa}{Example}{\bf}{\rm}

% Redefinition of proof environment so that 
% \begin{proof}[Proof of Lemma 1.3]
% produces 'Proof of Lemma 1.3.' instead of 
% 'Proof (Proof of Lemma 1.3)'
\spnewtheorem*{xproof}{}{\itshape}{\rmfamily}% label is assigned later
\newcommand\xprooftitle{}
\renewenvironment{proof}[1][\proofname]
{\renewcommand\xproofname{#1.}\xproof}
{\endxproof}

\smartqed

\title{A Characterization of Effective Resistance Metrics}
\author{Tobias Weihrauch}
\date{\today}

\institute{T. Weihrauch
	\at Universit\"{a}t Leipzig, Fakult\"{a}t f\"{u}r Mathematik und Informatik, Augustusplatz 10, 04109 Leipzig, Germany, \email{weihrauch@math.uni-leipzig.de}}

\maketitle

\begin{abstract}
	We produce a characterization of finite metric spaces which are given by the effective resistance of a graph. This characterization is applied to the more general context of resistance metrics defined by Kigami. A countably infinite resistance metric gives rise to a sequence of finite, increasing graphs with invariant effective resistance. We show that these graphs have a unique limit graph in terms of the convergence of edge weights and that their associated random walks converge weakly to the random walk on the limit graph. If the limit graph is recurrent, its effective resistance is identified as the initial resistance metric.
\end{abstract}
\subclass{Primary 05C12 $\cdot$ 05C81 $\cdot$ 31E05; Secondary 05C50 $\cdot$ 54E70 $\cdot$ 60J45}
\keywords{Weighted graph, Effective resistance, Graph Laplacian, Resistance metrics, Random walk, Weak convergence}

\section{Introduction}
\label{section:Introduction}

Consider an undirected, connected graph on a finite vertex set $V$ with no self-loops and no multiple edges. Introducing positive weights $c(x,y)$ on the edges and interpreting them as conductances, this graph becomes an electrical network which induces an effective resistance $R(x,y)$ between every two nodes $x,y \in V$. A well-established but somewhat surprising result is that the effective resistance is in fact a metric on $V$, see \cite{Tetali1991,Klein1993}. A very nice proof of this statement utilizing the connection between electrical currents and random walks on graphs (see \cite{DoyleSnell1984,LyonsPeres2015}) is given in \cite{Tetali1991} by Tetali. An integral part of this proof is to represent the effective resistance $R(x,y)$ using the expected number of times a random walk starting in $x$ visits $x$ before reaching $y$, more precisely
\begin{equation}
\label{eq:IntroductionEffRes}
R(x,y) = \frac{1}{c_x}\E_x\left[\sum_{k=0}^{\tau_y-1} \mathds{1}_x(\omega_k)\right]
\end{equation}
where $c_x$ is the sum of all edge weights attached to $x$. 

Since every reversible, irreducible Markov chain on a finite state space $V$ which leaves its current state with probability 1 is a random walk on some weighted graph $G = (V,c)$, see \cite{AldousFill2002}, this also produces a natural way to obtain a metric from such a stochastic process. Furthermore, a metric $d$ on $V$ admits a representation as in (\ref{eq:IntroductionEffRes}) if and only if it is the effective resistance of a weighted graph on $V$. This leads to the following questions which we seek to answer in this paper. Which finite metric spaces $(V,d)$ are given by effective resistances of graphs? Does there exist a concise condition in terms of the metric $d$? Can we extend such a condition to countably infinite metric spaces? Finally, when can the representation (\ref{eq:IntroductionEffRes}) be extended to countably infinite metric spaces?

In Section \ref{section:FiniteERS} we produce the following necessary and sufficient condition for a finite metric space $(V,d)$ to be the effective resistance of a graph (Theorem \ref{theorem:CharacterizationFiniteERS}). For $x,y,z \in V$, let
$$A_y(x,z) := \begin{cases}
	1 & , ~ x = y = z\\
	\frac{1}{2}(d(x,y) + d(y,z) - d(x,z)) & , \text{ otherwise}
\end{cases}$$
and $b_y(x) = 1 - \delta_y(x)$. Then, there exists a graph $G$ with effective resistance $d$ if and only if $\det A_y > 0$ for some $y \in V$ and the unique solution matrix $c \in \R^{V \times V}$ of the family of linear equation systems
$$A_y \cdot c(\cdot , y) = b_y ~ , ~ y \in V$$
has only non-negative entries. In this case, $G = (V,c)$.

Section \ref{section:ResistanceMetrics} is devoted to recalling the theory of resistance metrics introduced by Kigami in \cite{Kigami2001} and applying Theorem \ref{theorem:CharacterizationFiniteERS} to it in order to obtain a characterization of countably infinite resistance metrics (see Theorem \ref{theorem:CharResistanceMetrics}). 

In Section \ref{section:LimitingProperties} we are given a countably infinite resistance metric $(V,R)$. This induces a sequence of growing finite graphs $(V_n,c_n)$ with invariant effective resistance $R\restr_{V_n}$. We investigate the limiting behavior of the edge weights $c_n$. The main result of this section (Theorem \ref{theorem:KantenLimesEx}) states that for fixed vertices $x,y \in V$, the sequence $c_n(x,y)$ is monotonically decreasing and thus has a limit which leads to the definition of a limit graph $G_R$ for $R$. While $G_R$ may not always admit a well-defined Laplacian, it is shown in Proposition \ref{prop:LimitGraphConsistency} that if $R$ is an effective resistance of some countably infinite graph $G$, one has $G_R = G$.

In Section \ref{section:WeakLimitRandomWalk} we utilize Prohorov's theorem to show that the random walks on these graphs have a weakly convergent subsequence if and only if the sequence of edge weights converges in a well-behaved manner (Theorem \ref{theorem:WeakConvergence}). This section's main result (Theorem \ref{theorem:WeakLimitIsRandomWalkOnLimitGraph}) states that in this case, the whole sequence is weakly convergent and the weak limit is identified as the random walk of $G_R$. Furthermore, we show that if $G_R$ is recurrent, $R$ admits a probabilistic representation as in (\ref{eq:IntroductionEffRes}) using the limit random walk (Theorem \ref{theorem:ProbabilisticRepresentationMetricInfinite}). Since such a representation is known to exist for the effective resistance of a recurrent graph, it follows that the effective resistance of $G_R$ is exactly $R$ (Corollary \ref{cor:EffResOfRecurrentLimitGraph}).

The notion of effective resistance has been extensively studied before by utilizing various methods. Besides potential theory \cite{Soardi1994}, the notion is also tightly connected to other fields of mathematical study, including random walks and reversible Markov chains \cite{DoyleSnell1984,Chandra1997}, the theory of reproducing kernels \cite{JorgensenPearse2009,Kasue2010} and algebraic graph theory \cite{DorflerBullo2013}. A comprehensive study of the probabilistic approach to electrical currents on graphs is given in the book \cite{LyonsPeres2015}.

Classically, effective resistance is defined using solutions of Kirchoff's laws which correspond to harmonic functions \cite{DoyleSnell1984}. When trying to extend the definition to infinite graphs or sets, problems arise because of the non-uniqueness of such solutions. This leads to several different notions like the \emph{free} and \emph{wired} effective resistance \cite{LyonsPeres2015}. The most general theory which allows for \emph{resistance metrics} on arbitrary sets is introduced by Kigami in \cite{Kigami2001} using \emph{resistance forms}.

Since there are so many different approaches to effective resistances and notational conventions vary a lot, we use the remainder of this section to introduce some basic definitions and notation used in this work.

\paragraph{Graphs}
A \emph{weighted graph} $G$ is a pair $(V,c)$ consisting of a finite or countably infinite set of \emph{vertices} $V \neq \emptyset$ and a \emph{weight function} $c:V \times V \to \R_{\geq 0}$ such that $c(x,x) = 0$ and $c(x,y) = c(y,x)$ for all $x,y \in V$. Furthermore, for $x \in V$, let $c_x := \sum_{y \in V} c(x,y)$. We consider two vertices $x,y$ to be adjacent if $c(x,y) > 0$. If not explicitly stated otherwise, we assume every occurring graph to be weighted, connected and locally finite in the sense that $c_x < \infty$ for all $x \in V$.

\paragraph{Laplacian}
For a locally finite graph $G = (V,c)$, a function $f: V \to \R$ and $x \in V$, let
\begin{equation}
\label{eq:GraphLaplacian}
(\Delta f)(x) = f(x) - \sum_{y \in V} \frac{c(x,y)}{c_x} f(y).
\end{equation}
We say a function $f: V \to \R$ is \emph{harmonic} if $\Delta f \equiv 0$.

We will often switch seamlessly between the operator $\Delta$ and its associated matrix which is defined by
$$\Delta(x,y) = \delta_x(y) - \frac{c(x,y)}{c_x}.$$
The operator $\Delta$ is a normalized, non-negative version of what is called the \emph{Laplacian} of $G$ (cf. \cite{JorgensenPearse2009,Kumagai2014,LyonsPeres2015}). It should not be confused with another normalized version which is used in spectral graph theory, see, e.g., \cite{Chung1997,Spielman2007}, although the spectra of both matrices are equal.

Note that $G$ does not need to be connected for $\Delta$ to be well-defined as long as $0 < c_x < \infty$ for all $x \in V$.

\paragraph{Effective resistance}
Let $G = (V,c)$ be a finite graph and $x,y \in V$. Interpreting $c(x,y)$ as the pair-wise conductance between two nodes $x$ and $y$, $G$ represents an electrical network. The \emph{effective resistance} $R(x,y)$ of $G$ between the nodes $x$ and $y$ can now be defined as the voltage drop between $x$ and $y$ when a unit current flows from $x$ to $y$ through $G$. More precisely, we have
\begin{equation}
\label{eq:DefinitionEffectiveResistance}
R(x,y) := \phi^{xy}(x)
\end{equation}
where $\phi^{xy}$ is the solution of the discrete Dirichlet problem
\begin{align*}
\label{eq:DirichletProblem}
\Delta \phi^{xy} & = \frac{1}{c_x} \mathds{1}_x - \frac{1}{c_y} \mathds{1}_y\tag{D}\\
\phi^{xy}(y) & = 0 ~.
\end{align*}
Note that there are many different ways to define effective resistance. An effort to show the equivalence of some of these different definitions can be found in \cite[Theorem 2.3]{JorgensenPearse2009}.

\paragraph{Random walk}
Let $\N_0 = \N \cup \menge{0}$. For a vertex $x \in V$, consider the random walk starting in $x$ and jumping from vertex $y$ to $z$ with probability $c(y,z)/c_y$. We will denote its distribution on the space of trajectories $\Omega = V^{\N_0}$ by $\P_x$. Hence, $\P_x$ is an irreducible, reversible Markov chain such that for $y,z \in V$, we have
\begin{equation}
\label{eq:DefinitionRandomWalk}
\P_x[\omega_0 = x] = 1 ~ , ~ \P_x[\omega_{k+1} = z ~ | ~ \omega_k = y] = \frac{c(y,z)}{c_y} ~.
\end{equation}
We denote by $\E_x$ the expectation of $\P_x$, i.e. $\E_x[f] = \int_{\Omega} f ~ \text{d}\P_x$.

For $x \in V$ and $\omega \in \Omega$, let
\begin{align}
\label{eq:DefinitionHittingTime}
\tau_x(\omega) & = \inf\mengedef{k \geq 0}{\omega_k = x} \text{ and}\\
\tau^+_x(\omega) & = \inf\mengedef{k \geq 1}{\omega_k = x}.\notag
\end{align}
be the \emph{hitting time} of $x$.

\paragraph{Metric space}
For any set $X$ and a mapping $d: X \times X \to \R_{\geq 0}$, the pair $(X,d)$ is a metric space if $d$ satisfies the following conditions for all $x,y,z \in X$.
\begin{align*}
d(x,y) = 0 & \Leftrightarrow x = y \label{eq:M1}\tag{M1}\\
d(x,y) & = d(y,x) \label{eq:M2}\tag{M2}\\
d(x,y) + d(y,z) & \leq d(x,z) \label{eq:M3}\tag{M3}
\end{align*}
(\ref{eq:M3}) is called \emph{triangle inequality}.

\paragraph{Finite exhaustion}
For a countably infinite set $V$, a \emph{finite exhaustion} is a sequence $(V_n)_{n \in \N}$ of subsets of $V$ such that $V_n \subseteq V_{n+1}$, $|V_n| < \infty$ for all $n\in \N$ and $\bigcup_{n \in \N} V_n = V$. 

If we consider (countably) infinite graphs $(V,c)$, we will encounter sums of the form $\sum_{v \in V} f(v)$. Whenever that happens, we will implicitly assume that we have some given exhaustion $(V_n)_{n \in \N}$ and define
$$\sum_{v \in V} f(v) := \lim_{n \to \infty} \sum_{v \in V_n} f(v)$$
if the right hand-side exists. Note that if $f(v) \geq 0$ for all $v \in V$, then the right hand-side is independent of the choice of $(V_n)_{n \in\N}$ since it either converges absolutely or is infinite.

\paragraph{Matrix restriction}
For a matrix $A  \in \R^{V \times W}$ and $V' \subseteq V, W' \subseteq W$, let $A\restr_{V' \times W'}$ be the matrix which results from $A$ by restricting the rows of $A$ to indices in $V'$ and the columns to indices in $W'$. If $V' = W'$, we will write $A \restr_{V'}$.

Furthermore, let $A^t$ denote the transpose of $A$.

\section{Finite effective resistance spaces}
\label{section:FiniteERS}
Throughout this section let $(V,d)$ be a finite metric space. We say $(V,d)$ is an \emph{effective resistance space (ERS)} if there exists a graph $G = (V,c)$ with effective resistance $d$.

Our goal is to find a necessary and sufficient condition for when $(V,d)$ is an ERS. Therefore, we first establish some properties of such spaces. Building on \cite{Tetali1991}, we show that the underlying graph $G = (V,c)$ of an ERS can be reconstructed from $d$ by means of a family of linear equation systems
$$A_y \cdot c(\cdot, y) = b_y ~, y \in V$$
which use triangle inequality defects of $d$ as coefficients (Proposition \ref{prop:CIsSolutionOfLES}) and that $\det A_y > 0$ always holds (Proposition \ref{prop:MatrixOfTriangleDefectsHasPositiveDeterminant}). Furthermore, by definition of an ERS, $c(x,y)$ has to be non-negative for all $x,y \in V$.

Conversely, if one assumes that $\det A_y > 0$ for an arbitrary metric space $(V,d)$, solving the corresponding linear equation systems yields a possible candidate for $c$. It turns out that assuming non-negativity of all $c(x,y)$ is then sufficient for $(V,c)$ to define a graph (Proposition \ref{prop:Symmetry}) which has effective resistance $d$ (Proposition \ref{prop:CAndMyAreCompatible}).

The statements of Theorem 1 and Corollary 3 in \cite{Tetali1991} can be generalized to fit our context of weighted graphs and are merged in the following proposition.
\begin{prop}[Tetali]
	Let $G = (V,c)$ be a finite graph with effective resistance $R$ and $x,y,z \in V$ such that $x \neq y$. Furthermore, let $\phi^{xy}$ be the solution of the Dirichlet problem (\ref{eq:DirichletProblem}). Then, the following holds.
	\begin{equation}
	\label{eq:PotentialTetali}
	\phi^{xy}(z) = \frac{1}{2}(R(x,y) + R(y,z) - R(x,z)) = \frac{1}{c_z} \E_x\left[\sum_{k=0}^{\tau_y-1} \mathds{1}_z(\omega_k)\right].
	\end{equation}
\end{prop}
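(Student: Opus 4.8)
The plan is to prove the two equalities separately, beginning with the probabilistic one on the right, since the reversibility symmetry it exposes will also power the first equality. Write $N^y(x,z) := \E_x\left[\sum_{k=0}^{\tau_y-1}\mathds{1}_z(\omega_k)\right]$ for the expected number of visits to $z$ strictly before the walk first reaches $y$, and set $\psi(z) := \frac{1}{c_z} N^y(x,z)$. The strategy is to show that $\psi$ solves the Dirichlet problem (\ref{eq:DirichletProblem}), i.e. $\Delta\psi = \frac{1}{c_x}\mathds{1}_x - \frac{1}{c_y}\mathds{1}_y$ with $\psi(y)=0$, and then to conclude $\psi = \phi^{xy}$ by uniqueness. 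Uniqueness holds because on the finite connected graph $G$ the kernel of $\Delta$ consists of the constants, the source term satisfies $\sum_z c_z\left(\frac{1}{c_x}\mathds{1}_x - \frac{1}{c_y}\mathds{1}_y\right) = 0$, and the condition $\psi(y)=0$ pins down the free additive constant. The boundary value $\psi(y) = \frac{1}{c_y}N^y(x,y) = 0$ is immediate, since the walk registers no visit to $y$ before $\tau_y$.

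To identify $\Delta\psi$, I would first run a one-step analysis: for $a \neq y$, conditioning on the first step gives $N^y(a,b) = \mathds{1}_b(a) + \sum_w \frac{c(a,w)}{c_a} N^y(w,b)$, which is exactly $\left(\Delta N^y(\cdot,b)\right)(a) = \mathds{1}_b(a)$ for $a \neq y$. The crucial step is the reversibility identity $c_a N^y(a,b) = c_b N^y(b,a)$, obtained by noting that the walk killed upon hitting $y$ is reversible with respect to the measure $c$, so each $k$-step transition probability is symmetric after weighting by $c$, and summing over $k$ gives the claim. This yields $\psi(z) = \frac{1}{c_z} N^y(x,z) = \frac{1}{c_x} N^y(z,x)$; applying the recurrence in the first argument with $b=x$ then gives $\Delta\psi(z) = \frac{1}{c_x}\mathds{1}_x(z)$ for all $z \neq y$. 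Finally the value at $y$ is forced by the identity $\sum_z c_z \Delta\psi(z) = 0$, which holds for every function $\psi$ by the symmetry $c(z,w)=c(w,z)$; isolating the $z=y$ term and using $x \neq y$ gives $c_y\Delta\psi(y) + 1 = 0$, hence $\Delta\psi(y) = -\frac{1}{c_y}$, matching (\ref{eq:DirichletProblem}). Thus $\psi = \phi^{xy}$, which is the second equality.

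For the first equality, the cases $z \in \{x,y\}$ can be checked directly, as both sides reduce to $R(x,y)$ and $0$ respectively. For $z \notin\{x,y\}$, linearity of $\Delta$ gives $\Delta(\phi^{xz}+\phi^{zy}) = \frac{1}{c_x}\mathds{1}_x - \frac{1}{c_y}\mathds{1}_y = \Delta\phi^{xy}$, so uniqueness yields $\phi^{xy} = \phi^{xz} + \phi^{zy} + C$ with $C = -\phi^{xz}(y)$ after evaluating at $y$. Evaluating this superposition at $w=x$ gives $R(x,y) = R(x,z) + \phi^{zy}(x) - \phi^{xz}(y)$, and at $w=z$ gives $\phi^{xy}(z) = R(y,z) - \phi^{xz}(y)$. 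The remaining ingredient is the reciprocity $\phi^{xy}(z) = \phi^{zy}(x)$, which is just the reversibility symmetry above rewritten through $\phi^{ab}(\cdot) = \frac{1}{c_{(\cdot)}} N^b(a,\cdot)$. Substituting these relations into the equation for $R(x,y)$ and solving for $\phi^{xy}(z)$ produces $2\phi^{xy}(z) = R(x,y) + R(y,z) - R(x,z)$, as desired. I expect the main obstacle to be the reversibility identity $c_a N^y(a,b) = c_b N^y(b,a)$ for the killed walk, together with carefully justifying the uniqueness of the Dirichlet solution; everything else is linear bookkeeping.
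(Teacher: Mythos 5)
Your argument is correct. Note that the paper itself does not prove this proposition: it imports the statement from Tetali's work, and the only related verification in the text (that $z\mapsto \frac{1}{c_z}\E_x\bigl[\sum_{k=0}^{\tau_y-1}\mathds{1}_z(\omega_k)\bigr]$ solves (\ref{eq:DirichletProblem})) is explicitly left as an exercise inside the proof of Lemma \ref{lemma:PotentialIffGraphConnected}. So there is no in-paper proof to compare against; what you have written is a self-contained substitute, and your two key ingredients --- the first-step recurrence $(\Delta N^y(\cdot,b))(a)=\mathds{1}_b(a)$ for $a\neq y$ combined with the reversibility identity $c_a N^y(a,b)=c_b N^y(b,a)$ for the walk killed at $y$, and the superposition $\phi^{xy}=\phi^{xz}+\phi^{zy}-\phi^{xz}(y)$ resolved via the reciprocity $\phi^{xy}(z)=\phi^{zy}(x)$ --- are exactly the standard route and essentially Tetali's original one. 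Two small points you should make explicit in a write-up: first, the direct check of the cases $z\in\{x,y\}$ silently uses $R(x,x)=0$ and $R(x,y)=R(y,x)$; the latter follows from the same uniqueness argument you already have, since $\phi^{yx}=R(x,y)-\phi^{xy}$ solves the reversed problem, so it costs you nothing. Second, the one-step decomposition of $N^y(a,b)$ presupposes $N^y<\infty$, which holds here because on a finite connected graph $\tau_y$ has geometric tails under every $\P_a$. Neither is a gap, just bookkeeping; the proof is complete as outlined.
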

In particular, (\ref{eq:PotentialTetali}) implies that
\begin{equation}
\label{eq:EffectiveResistanceProabilistic}
R(x,y) = \phi^{xy}(x) = \frac{1}{c_x}\E_x\left[\sum_{k=0}^{\tau_y-1} \mathds{1}_x(\omega_k)\right]
\end{equation}
which gives us a probabilistic representation of effective resistances. Furthermore, we can use (\ref{eq:PotentialTetali}) to prove that the effective resistance of a possibly unconnected graph exists if and only if this graph is connected.
\begin{lem}
	\label{lemma:PotentialIffGraphConnected}
	Let $G = (V,c)$ be a finite and possibly unconnected graph. Then, the Dirichlet problem ($\ref{eq:DirichletProblem}$) on $G$ has a solution for all $x,y \in V$ if and only if $G$ is connected. In this case, the solution is unique.
\end{lem}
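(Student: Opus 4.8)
The plan is to analyze the linear operator associated with the Dirichlet problem (D) and characterize its solvability through the connectedness of $G$. Fix $x,y \in V$ with $x \neq y$. The system (D) consists of the $|V|$ equations $\Delta \phi = \frac{1}{c_x}\mathds{1}_x - \frac{1}{c_y}\mathds{1}_y$ together with the normalization $\phi(y) = 0$. Since the last condition pins down the value at $y$, the natural approach is to eliminate the $y$-coordinate and study the restricted operator $\Delta\restr_{(V \setminus \{y\}) \times (V \setminus \{y\})}$, i.e. the Laplacian with a Dirichlet boundary condition imposed at $y$. I would show that (D) has a solution (automatically unique) for every pair $x,y$ precisely when each such restricted matrix is invertible.

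First I would treat the easy direction. Suppose $G$ is connected. I would argue that the matrix $\Delta\restr_{V\setminus\{y\}}$ is invertible: it is a diagonally dominant matrix whose diagonal entries are $1$ and whose off-diagonal row sums are $\sum_{z \neq y} c(x,z)/c_x \leq 1$, with strict inequality exactly at those $x$ adjacent to $y$. Connectedness guarantees that from every vertex there is a path to $y$, so the strict dominance "propagates," and a standard argument (for instance via the maximum principle for harmonic functions, or irreducibility of the substochastic transition matrix obtained by killing at $y$) shows the kernel is trivial. Concretely, if $\Delta\restr_{V\setminus\{y\}}\psi = 0$, then $\psi$ extended by $\psi(y)=0$ is harmonic off $y$; the maximum principle forces $\psi \equiv 0$. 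Hence the restricted system, and therefore (D), has a unique solution. Uniqueness in general follows the same way: the difference of two solutions is harmonic everywhere except possibly where the right-hand side is nonzero, vanishes at $y$, and the maximum principle yields that it is identically zero.

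For the converse I would prove the contrapositive: if $G$ is disconnected, then (D) fails to have a solution for a suitable choice of $x,y$. Let $C$ be the connected component containing $y$, and pick any $x$ in a different component. Summing the equation $(\Delta\phi)(v) = \frac{1}{c_x}\mathds{1}_x(v) - \frac{1}{c_y}\mathds{1}_y(v)$ over all $v$ in the component $C'$ containing $x$ gives a solvability (Fredholm) obstruction: because there are no edges leaving $C'$, the weighted sum $\sum_{v \in C'} c_v (\Delta\phi)(v)$ telescopes to $0$, whereas the right-hand side summed against the same weights contributes the nonzero net current $+1$ from the source at $x$ with no matching sink in $C'$. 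This contradiction shows no solution exists, so solvability of (D) for all $x,y$ forces connectedness.

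The main obstacle I anticipate is making the connected direction fully rigorous — specifically, establishing invertibility of $\Delta\restr_{V\setminus\{y\}}$ and the corresponding maximum principle cleanly. The substochasticity argument requires care: one must verify that killing the walk upon hitting $y$ yields a strictly substochastic matrix whose spectral radius is below $1$ exactly because connectedness lets every vertex reach $y$. An alternative, perhaps cleaner, route is to invoke the probabilistic representation (\ref{eq:PotentialTetali}): on a connected graph the expected occupation times $\frac{1}{c_z}\E_x[\sum_{k=0}^{\tau_y - 1}\mathds{1}_z(\omega_k)]$ are finite and well-defined, and a direct computation shows the function $z \mapsto \phi^{xy}(z)$ they define solves (D), thereby exhibiting a solution explicitly rather than arguing abstractly about matrix invertibility. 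I would likely present the maximum-principle argument as the backbone and use the probabilistic formula as the existence witness in the connected case.
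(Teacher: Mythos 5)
Your proposal is correct and follows essentially the same route as the paper: existence on a connected graph is witnessed by the occupation-time formula $\phi(z) = \frac{1}{c_z}\E_x\bigl[\sum_{k=0}^{\tau_y-1}\mathds{1}_z(\omega_k)\bigr]$, uniqueness follows from the uniqueness of harmonic functions (maximum principle), and non-solvability on a disconnected graph comes from the same weighted component sum $\sum_{v \in X} c_v(\Delta\phi)(v) = 0$ clashing with the net source term. The extra linear-algebraic packaging via invertibility of $\Delta\restr_{V\setminus\{y\}}$ is sound but not needed beyond what the harmonic-function uniqueness already provides.
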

\begin{proof}
	Suppose that $G$ is connected, let $x,y \in V$, $x \neq y$ and define
	$$\phi(z) := \frac{1}{c_z} \E_x\left[ \sum_{k=0}^{\tau_y - 1} \mathds{1}_z(\omega_k) \right].$$
	We leave the verification that this function indeed solves (\ref{eq:DirichletProblem}) as an exercise. For two solutions $\phi$ and $\phi'$ of (\ref{eq:DirichletProblem}), $h := \phi - \phi'$ satisfies $\Delta h \equiv 0$ and $h(y) = 0$. It is a standard result that harmonic functions on finite, connected graphs are unique (cf. \cite{LyonsPeres2015}).
	
	Now suppose that $G$ is not connected and $X \subseteq V$ is a connected component of $G$. For all $x \in X$, we then have $c_x = \sum_{y \in X} c(x,y)$ since $c(x,y) > 0$ implies $y \in X$. Hence, for any function $\phi: V \to \R$, we compute
	\begin{align*}
	\sum_{x \in X} (\Delta \phi)(x) \cdot c_x & = \sum_{x \in X} \left(c_x \phi(x) - \sum_{y \in V} \lb c(x,y) \phi(y) \rb \right) \\
	& = \lb \sum_{x \in X} c_x \phi(x)\rb - \sum_{x \in X} \sum_{y \in X}\lb c(x,y) \phi(y)\rb\\
	& = \lb\sum_{x \in X} c_x\phi(x)\rb - \sum_{y \in X} \left( \phi(y) \sum_{x \in X} c(x,y)\right)\\
	& = \lb \sum_{x \in X} c_x \phi(x)\rb - \sum_{y \in X} c_y \phi(y) = 0.
	\end{align*}
	Now let $x \in X$ and $y \in V \setminus X$. If $\phi: V \to \R$ satisfies $(\Delta \phi)(x) = \nicefrac{1}{c_x}$, then the above computation shows that there exists some $x' \in X$ such that $(\Delta \phi)(x') < 0$ and thus $\phi$ cannot solve $\Delta \phi = \nicefrac{1}{c_x}\mathds{1}_x - \nicefrac{1}{c_y} \mathds{1}_y$. Hence, there does not exist a solution for (\ref{eq:DirichletProblem}).\qed
\end{proof}
Note that if $(V,d)$ is an ERS, (\ref{eq:PotentialTetali}) implies that we can obtain the electrical potential $\phi^{xy}$ of a unit current between two points $x,y \in V$ via
\begin{equation}
\label{eq:PotentialViaTriangleInequality}
\phi^{xy}(z) = \frac{1}{2}(d(x,y) + d(y,z) - d(x,z)).
\end{equation}
We now turn this statement into a definition for arbitrary finite metric spaces. This gives us the ability to interpret the Dirichlet problem (\ref{eq:DirichletProblem}) as an equation system of the unknown variable $c$ rather than unknown function $\phi$. By grouping together certain equations, we obtain a family of equation systems whose solution, if it exists and satisfies an additional condition, is a graph with effective resistance $d$.
\begin{dfn}
	For $x,y,z \in V$, let
	\begin{equation}
	\label{eq:DefinitionM}
	M_y(x,z) := \frac{1}{2} ( d(x,y) + d(y,z) - d(x,z))
	\end{equation}
	be the defect occurring in the triangle inequality when using the intermediate point $y$. We denote by $M_y'$ the restriction of $M_y$ to the index set $(V \setminus\menge{y}) \times (V \setminus\menge{y})$. Furthermore, let
	\begin{equation}
	\label{eq:DefinitionA}
	A_y(x,z) := \begin{cases}
	1 & , ~ x = y = z\\
	M_y(x,z) & , \text{ otherwise}
	\end{cases}
	\end{equation}
	and
	\begin{equation}
	\label{eq:DefinitionB}
	b_y(x) = 1-\delta_{y}(x).
	\end{equation}
\end{dfn}
\begin{prop}
	\label{prop:CIsSolutionOfLES}
	If $(V,d)$ is an ERS with associated graph $G = (V,c)$ and $y \in V$, then the vector $c(\cdot, y) \in \R^V$ satisfies the equation
	$$A_y \cdot c(\cdot, y) = b_y.$$
\end{prop}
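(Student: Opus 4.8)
The plan is to verify the matrix identity $A_y \cdot c(\cdot,y) = b_y$ one component at a time. Reading off the $x$-th row, the claim is equivalent to
$$\sum_{z \in V} A_y(x,z)\, c(z,y) = b_y(x) = 1 - \delta_y(x) \quad \text{for every } x \in V,$$
so I would split into the cases $x = y$ and $x \neq y$. The case $x = y$ is immediate: by the definition (\ref{eq:DefinitionA}) of $A_y$ we have $A_y(y,z) = 1$ for $z = y$ and, for $z \neq y$, $A_y(y,z) = M_y(y,z) = \frac{1}{2}(d(y,y) + d(y,z) - d(y,z)) = 0$. Hence the left-hand side collapses to $A_y(y,y)\, c(y,y) = c(y,y) = 0$, which agrees with $b_y(y) = 0$ since $c(y,y) = 0$ by the definition of a weighted graph.

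For the main case $x \neq y$ I would first note that the exceptional entry of $A_y$ never arises, because it requires $x = y$; thus $A_y(x,z) = M_y(x,z)$ for all $z \in V$. Since $G$ has effective resistance $d$, Tetali's proposition (\ref{eq:PotentialTetali}) identifies these coefficients with the harmonic potential of the unit current from $x$ to $y$, namely $M_y(x,z) = \frac{1}{2}(d(x,y) + d(y,z) - d(x,z)) = \phi^{xy}(z)$. The $x$-th equation therefore becomes $\sum_{z \in V} \phi^{xy}(z)\, c(z,y) = 1$.

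The heart of the argument is then a single evaluation of the Laplacian at the sink $y$. Using the symmetry $c(z,y) = c(y,z)$ and rearranging the definition (\ref{eq:GraphLaplacian}) of $\Delta$ at $y$, one obtains
$$\sum_{z \in V} \phi^{xy}(z)\, c(z,y) = c_y\big(\phi^{xy}(y) - (\Delta \phi^{xy})(y)\big).$$
The boundary condition of the Dirichlet problem gives $\phi^{xy}(y) = 0$, while the equation (\ref{eq:DirichletProblem}) evaluated at $y$ with $x \neq y$ gives $(\Delta \phi^{xy})(y) = \frac{1}{c_x}\mathds{1}_x(y) - \frac{1}{c_y}\mathds{1}_y(y) = -\frac{1}{c_y}$. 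Substituting these two values yields $c_y \cdot \frac{1}{c_y} = 1$, which is exactly $b_y(x)$ for $x \neq y$.

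I do not expect a genuine obstacle: once the entries $A_y(x,\cdot)$ are recognized as the potential values $\phi^{xy}$ through Tetali's proposition, the desired identity is forced by the Dirichlet equation at $y$. The only points that demand care are the bookkeeping of the exceptional diagonal entry of $A_y$ (which is precisely what makes the degenerate $x = y$ row consistent with $b_y(y) = 0$) and keeping the roles of the source $x$ and the sink $y$ straight when reading off $(\Delta \phi^{xy})(y)$ from the right-hand side of (\ref{eq:DirichletProblem}).
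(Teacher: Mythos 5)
Your proposal is correct and follows essentially the same route as the paper: identify the row entries $A_y(x,\cdot)$ with the potential $\phi^{xy}$ via Tetali's proposition and then read off the identity from the Dirichlet equation $(\Delta\phi^{xy})(y) = -1/c_y$ together with $\phi^{xy}(y)=0$ and the symmetry of $c$. The only difference is cosmetic: you spell out that the $y$-row of $A_y$ vanishes off the diagonal, which the paper leaves implicit.
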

\begin{proof}
	First, note that the equation in the $y$-row simply states that $c(y,y) = 0$ which is true by definition of $c$. Since $(V,d)$ is an ERS, we know that $M_y(x,\cdot) = \phi^{xy}$ is a solution of (\ref{eq:DirichletProblem}). Hence, for $x\neq y$, we have
	$$-\frac{1}{c_y} = (\Delta \phi^{xy})(y) = \phi^{xy}(y) - \sum_{z \in V} \frac{c(y,z)}{c_y} \phi^{xy}(z).$$
	Using $\phi^{xy}(y) = 0$ and the symmetry of $c$, we get 
	$$1 = \sum_{z \in V} c(y,z) \phi^{xy}(z) = \sum_{z \in V} M_y(x,z) c(z,y) = (A_y \cdot c(\cdot, y))(x).	$$
	which concludes the proof.\qed
\end{proof}

It seems natural to wonder about the solvability of (\ref{eq:LES}). For an ERS, it is clear that there exists at least one solution but can there exist more? The following proposition uses the fact that $\Delta$ is a non-negative operator and the strong relationships between $A_y,M_y$ and $\phi^{xy}$ to show that $\det A_y > 0$.

\begin{prop}
	\label{prop:MatrixOfTriangleDefectsHasPositiveDeterminant}
	If $(V,d)$ is an ERS and $y \in V$, we have
	$$\det A_y = \det M_y' > 0.$$
\end{prop}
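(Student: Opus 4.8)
The plan is to separate the statement into the algebraic identity $\det A_y = \det M_y'$ and the strict inequality $\det M_y' > 0$; the former is elementary, while the latter is where the non-negativity of the Laplacian does the work. For the identity, I would inspect the $y$-th row and column of $A_y$. From (\ref{eq:DefinitionM}) together with $d(y,y) = 0$ and the symmetry of $d$, one computes $A_y(y,z) = M_y(y,z) = 0$ and $A_y(x,y) = M_y(x,y) = 0$ for all $x,z \neq y$, whereas $A_y(y,y) = 1$ by definition. Hence the $y$-th row and column of $A_y$ equal the unit vector $\mathds{1}_y$, and cofactor expansion along that row yields $\det A_y = \det M_y'$, since deleting row and column $y$ leaves precisely the matrix $M_y'$.

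For the inequality I would use that, $(V,d)$ being an ERS, (\ref{eq:PotentialViaTriangleInequality}) identifies the $x$-th row of $M_y'$ with the restriction of the potential $\phi^{xy}$ to $V \setminus \menge{y}$. Multiplying the Dirichlet problem (\ref{eq:DirichletProblem}) by the diagonal factor $c_z$ turns it into $L \phi^{xy} = \mathds{1}_x - \mathds{1}_y$, where $L = \mathrm{diag}(c_z)\,\Delta$ is the symmetric combinatorial Laplacian with entries $L(z,z) = c_z$ and $L(z,w) = -c(z,w)$ for $z \neq w$. Evaluating this at a vertex $z \neq y$ and using $\phi^{xy}(y) = 0$ to discard all $y$-terms from the sum, one obtains $\sum_{w \neq y} L'(z,w)\, M_y'(x,w) = \delta_x(z)$ for all $x,z \neq y$, where $L'$ is the restriction of $L$ to $(V \setminus \menge{y}) \times (V \setminus \menge{y})$. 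In matrix form this reads $L'\,(M_y')^t = I$, and since $M_y'$ is symmetric (by symmetry of $d$), it follows that $M_y' = (L')^{-1}$.

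It then remains to show that $L'$ is positive definite, which is exactly where the non-negativity of $\Delta$ enters. For $f : V \setminus \menge{y} \to \R$, extend $f$ to all of $V$ by $f(y) = 0$; because the terms involving the index $y$ vanish, $\langle f, L' f \rangle = \frac{1}{2} \sum_{z,w \in V} c(z,w) (f(z) - f(w))^2 \geq 0$, which is the non-negativity of the Dirichlet energy associated with $\Delta$. Equality forces $f$ to be constant on each connected component, and since $G$ is connected with $f(y) = 0$, this forces $f \equiv 0$. Hence $L'$ is symmetric positive definite, so $\det L' > 0$ and therefore $\det M_y' = \det (L')^{-1} = (\det L')^{-1} > 0$, completing the proof.

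The step I expect to be most delicate is the passage from the Dirichlet problem to the matrix identity $L'(M_y')^t = I$: one must carefully track the transpose, the weights $c_z$, and especially the grounding condition $\phi^{xy}(y) = 0$, which is precisely what allows the $y$-th row and column to be deleted cleanly when passing from $L$ to its restriction $L'$. The only other subtle point is upgrading the mere non-negativity of $\Delta$ to strict positive definiteness of $L'$, for which the connectedness of $G$ is indispensable.
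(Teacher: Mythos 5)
Your proof is correct. The algebraic reduction $\det A_y = \det M_y'$ via the $y$-row and $y$-column of $A_y$ is exactly the paper's Laplace expansion, and your matrix identity $L'(M_y')^t = I$ is, up to the diagonal scaling $\mathrm{diag}(c_z)$, the same computation the paper performs when it shows $\Delta\restr_{V'}\cdot M_y' = \mathrm{diag}(1/c_x)$; in both cases the grounding condition $\phi^{xy}(y)=0$ is what lets the sum over $V$ collapse to a sum over $V'$. Where you genuinely diverge is in justifying that the reduced combinatorial Laplacian $L' = K\restr_{V'}$ has positive determinant: the paper invokes Kirchhoff's matrix tree theorem to write $\det(K\restr_{V'})$ as a positive sum over spanning trees of the connected graph $G$, whereas you prove directly that $L'$ is symmetric positive definite by extending $f$ with $f(y)=0$, rewriting $\langle f, L'f\rangle$ as the Dirichlet energy $\frac{1}{2}\sum_{z,w} c(z,w)(f(z)-f(w))^2$, and using connectedness to rule out nonzero kernel elements. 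Your route is more self-contained (no external combinatorial theorem) and gives slightly more, namely that $M_y' = (L')^{-1}$ is itself positive definite rather than merely having positive determinant; the paper's route buys an explicit combinatorial formula for the determinant. Both arguments use the connectedness of $G$ in an essential way, yours to kill the kernel of $L'$ and the paper's to guarantee the existence of at least one spanning tree.
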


Before we prove Proposition \ref{prop:MatrixOfTriangleDefectsHasPositiveDeterminant}, let us recall Kirchhoff's matrix tree theorem (see \cite[Theorem VI.29]{Tutte2001}). The Kirchoff matrix of a graph $G = (V, c)$ is defined by
$$K(x,y) = \begin{cases}
c_x & ,  ~ x = y\\
-c(x,y) & , ~ x \neq y
\end{cases}.$$
Then, for any $y \in V$ and $V' := V \setminus\menge{y}$, we have
$$\det(K \restr_{V'}) = \sum_{\substack{T \text{ spanning}\\\text{tree of } G}} \sqrt{ \prod_{\substack{x,z \text{ adjacent}\\\text{ in } T}} c(x,z)} ~ .$$
We observe that $\Delta(x,z) = c_x^{-1} \cdot K(x,z)$ for all $x,z \in V$. Hence,
\begin{equation}
\label{eq:DeterminantOfLaplacianMinorIsPositive}
\det \Delta \restr_{V'} = \det(K\restr_{V'}) \cdot \prod_{x \in V'} \frac{1}{c_x} > 0
\end{equation}
\begin{proof}[Proof of Proposition \ref{prop:MatrixOfTriangleDefectsHasPositiveDeterminant}]
	Fix $y \in V$ and let $V' = V \setminus \menge{y}$. Since $M_y(x,z) = \phi^{xy}(z)$, we have for $x,z \in V'$,
	\begin{align*}
	(\Delta \restr_{V'} \cdot M_y')(x,z) & = \sum_{v \in V'} \Delta(x,v) M_y(v,z) = \sum_{v \in V'} \Delta(x,v) M_y(z,v) \\
	& = \sum_{v \in V'} \Delta(x,v) \phi^{zy}(v) = (\Delta \phi^{zy})(x) = \begin{cases}
	\frac{1}{c_x} & , x = z\\
	0 & , \text{otherwise}
	\end{cases}.
	\end{align*}
	By (\ref{eq:DeterminantOfLaplacianMinorIsPositive}), we have  $\det(\Delta\restr_{V'}) > 0$ and it follows that
	$$\det M_y' = \frac{\det(\Delta\restr_{V'} \cdot M_y')}{\det(\Delta\restr_{V'})} = \det(\Delta\restr_{V'})^{-1} \cdot \prod\limits_{x \neq y} \frac{1}{c_x} >0.$$
	Applying Laplace expansion to the $y$-row of $A_y$ yields $\det A_y = \det M_y'$.\qed
\end{proof}
The combination of Propositions \ref{prop:CIsSolutionOfLES} and \ref{prop:MatrixOfTriangleDefectsHasPositiveDeterminant} gives a necessary condition for a metric space to be an ERS.
\begin{cor}
	\label{cor:NecessaryCondition}
	If a finite metric space $(V,d)$ is an ERS, we have $\det A_y > 0$ for all $y \in V$. Furthermore, the unique solution $(c(x,y))_{x,y \in V}$ of the family of linear equation systems 
	\begin{align*}
	\label{eq:LES}
	A_y \cdot c(\cdot , y) & = b_y ~~,~ y \in V \tag{LES}
	\end{align*}
	has non-negative entries.
\end{cor}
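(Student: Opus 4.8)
The plan is to treat this corollary as essentially a bookkeeping assembly of the two preceding propositions, so the work is to check that each clause of the statement follows cleanly and that the uniqueness claim is justified. First I would invoke Proposition \ref{prop:MatrixOfTriangleDefectsHasPositiveDeterminant} directly: since $(V,d)$ is assumed to be an ERS, that proposition already yields $\det A_y = \det M_y' > 0$ for every $y \in V$, which is exactly the first assertion. No further computation is needed here.

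Next I would extract unique solvability from this positivity. Because $\det A_y > 0$ in particular means $A_y$ is invertible, each individual system $A_y \cdot c(\cdot,y) = b_y$ has exactly one solution vector in $\R^V$, and ranging over all $y \in V$ these assemble into a unique solution matrix $(c(x,y))_{x,y\in V}$ of the whole family (\ref{eq:LES}). This step is purely linear algebra and requires only that invertibility of each coefficient matrix is equivalent to unique solvability of the corresponding system.

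Finally I would identify this unique solution with the actual weight function of the graph. Proposition \ref{prop:CIsSolutionOfLES} guarantees that, for the graph $G=(V,c)$ realizing $d$, the column $c(\cdot,y)$ satisfies $A_y \cdot c(\cdot,y) = b_y$ for each $y$. Since we have just shown the solution is unique, the weight function $c$ of $G$ \emph{is} the solution matrix produced by (\ref{eq:LES}). Non-negativity of its entries is then immediate from the definition of a weighted graph, where the weight function takes values in $\R_{\geq 0}$.

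I do not expect a genuine obstacle in this argument; the only point that deserves a sentence of care is the logical order, namely that one must establish $\det A_y > 0$ \emph{first} so that ``the unique solution'' is well defined before one can speak of its entries, and only then appeal to Proposition \ref{prop:CIsSolutionOfLES} to pin that solution down as $c$ and read off non-negativity. All the analytic content has already been absorbed into the two cited propositions.
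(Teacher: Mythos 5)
Your proposal is correct and is exactly the argument the paper intends: the corollary is stated as the combination of Propositions \ref{prop:CIsSolutionOfLES} and \ref{prop:MatrixOfTriangleDefectsHasPositiveDeterminant}, with uniqueness following from $\det A_y > 0$ and non-negativity from the definition of a weighted graph. Your remark about establishing $\det A_y > 0$ first so that ``the unique solution'' is well defined is the right point of care, and nothing further is needed.
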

\begin{rem}
	\label{rem:BijectionERSConnectedGraphs}
	Note that $\det A_y > 0$ implies that there exists a \textbf{unique} solution for the linear equation system (\ref{eq:LES}). Hence, for every effective resistance on a finite set $V$ there exists exactly \textbf{one} graph $G = (V,c)$ which induces this effective resistance. In terms of Dirichlet forms and resistance metrics, see Section \ref{section:ResistanceMetrics}, this corresponds to \cite[Theorem 2.1.12]{Kigami2001}. 
\end{rem}
We will use the remainder of this section to prove that the condition stated in Corollary \ref{cor:NecessaryCondition} is not only necessary but also sufficient. From now on we assume that $(V,d)$ is a finite metric space such that $\det A_y > 0$ for all $y \in V$ and that the matrix $(c(x,y))_{x,y \in V}$ satisfying (\ref{eq:LES}) has non-negative entries.

We need to prove two things. First, that $c$ actually defines a graph and second, that the effective resistance of this graph is given by $d$. For the first step, the difficult part is proving that $c$ is symmetric. In order to do so, we will first prove that $\det A_y$ is actually independent of $y$. The essential step is to realize that we can produce $A_y$ from $A_x$ simply by reordering and adding up rows and columns. More precisely, there exist matrices $T_{xy}$ such that $A_y = T_{xy}A_x(T_{xy})^t$.

For $x,y \in V, x \neq y$, let $T_{xy} \in \R^{V \times V}$ be defined by
\begin{equation}
\label{eq:DefinitionT}
T_{xy}(w,z) = \begin{cases} 
1 & , w = z, w \notin \menge{x,y}\\
-1 & , w \neq y, z = y \text{ or } w = y, z = x\\
0 & , \text{ otherwise}
\end{cases}.
\end{equation}

\begin{lem}
	\label{lemma:DeterminantOfT}
	We have $\det T_{xy} = -1$ and $T_{xy}^{-1} = T_{yx}$.
\end{lem}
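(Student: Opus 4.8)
The plan is to prove both assertions by direct computation from the explicit definition of $T_{xy}$ in (\ref{eq:DefinitionT}), since the matrix is so sparse that this is entirely feasible. First I would understand the structure of $T_{xy}$ as a linear operator. Reading off the definition, for a vector $v \in \R^V$ the product $T_{xy}v$ acts as follows: for coordinates $w \notin \menge{x,y}$ the diagonal $1$ contributes $v(w)$, but the rule $w \neq y, z = y$ also puts a $-1$ in column $y$, so $(T_{xy}v)(w) = v(w) - v(y)$; for the row $w = x$ (which also satisfies $w \neq y$) we get $(T_{xy}v)(x) = v(x) - v(y)$ as well since the only nonzero entries in that row are the $-1$ at $z=y$; and for row $w = y$ the only nonzero entry is the $-1$ at $z = x$, giving $(T_{xy}v)(y) = -v(x)$. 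So $T_{xy}$ subtracts the $y$-coordinate from every other coordinate and sends the $y$-coordinate to $-v(x)$.

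To compute $\det T_{xy}$ I would expand cleverly rather than brute-force the full $|V| \times |V|$ determinant. The matrix differs from the identity only in column $y$ and in row $y$. A clean route is to perform a cofactor (Laplace) expansion along row $y$: that row has a single nonzero entry, namely $-1$ in column $x$. The resulting minor is the determinant of the matrix obtained by deleting row $y$ and column $x$; on the remaining index set this is (after tracking the sign of the permutation moving things into place) an upper- or lower-triangular matrix with $1$'s on the diagonal coming from the $w=z, w \notin \menge{x,y}$ entries, so its determinant is $1$. Combining the cofactor sign $(-1)^{\text{pos}(y)+\text{pos}(x)}$ with the entry $-1$ should yield $\det T_{xy} = -1$; I would organize the index bookkeeping so the sign works out cleanly, which is the one place care is needed.

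For the inverse identity $T_{xy}^{-1} = T_{yx}$, the cleanest approach is simply to verify $T_{xy} T_{yx} = I$ (equivalently $T_{yx} T_{xy} = I$) by composing the two operators using the coordinate-wise description above. Applying $T_{yx}$ first (which subtracts the $x$-coordinate from the others and sends coordinate $x$ to $-v(y)$) and then $T_{xy}$, I would check on each of the three coordinate classes --- namely $w \notin \menge{x,y}$, $w = x$, and $w = y$ --- that the composition returns $v(w)$. For instance, for $w = y$: applying $T_{yx}$ gives an intermediate vector $u$ with $u(y) = v(y) - v(x)$ and $u(x) = -v(y)$, and then $(T_{xy}u)(y) = -u(x) = v(y)$, as desired. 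The symmetric cases are analogous one-line checks. Since $\det T_{xy} = -1 \neq 0$ already guarantees invertibility, this single product computation identifies the inverse.

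The main obstacle I anticipate is not conceptual but bookkeeping: keeping the sign in the determinant computation correct, since the $\pm 1$ pattern and the positions of $x$ and $y$ within the (unordered) index set $V$ must be handled consistently. I would sidestep subtleties by noting that $T_{xy}$ is obtained from the identity by the elementary operations of (a) subtracting column $y$'s variable appropriately and (b) a single row/column swap-like exchange between $x$ and $y$, and that a single transposition accounts for the factor $-1$ while the triangular shear operations contribute $1$. Once the operator description is fixed, both claims reduce to routine verification, so I would present the coordinate-wise action first and then read off both the determinant and the inverse from it.
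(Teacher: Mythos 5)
Your overall strategy (read off the coordinate-wise action of $T_{xy}$, Laplace-expand for the determinant, and verify $T_{xy}T_{yx}=I$ directly) is sound and close in spirit to the paper's proof, which obtains the determinant by column operations plus a single column swap and likewise dismisses the inverse identity as a direct computation. However, two concrete errors in your execution would make the verification fail as written. First, your description of row $x$ is internally inconsistent and wrong: the case $w=z$, $w\notin\menge{x,y}$ in (\ref{eq:DefinitionT}) excludes $w=x$, so $T_{xy}(x,x)=0$; the only nonzero entry of row $x$ is the $-1$ in column $y$, hence $(T_{xy}v)(x)=-v(y)$, not $v(x)-v(y)$. With your stated description the composition check at coordinate $x$ gives $(T_{xy}T_{yx}v)(x)=v(x)-v(y)$ (or $v(x)-2v(y)$, if you also use the analogous misreading of $T_{yx}$) rather than $v(x)$; with the corrected description one gets $(T_{xy}u)(x)=-u(y)=-(-v(x))=v(x)$ and the identity does hold. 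Your explicit check at coordinate $y$ only survives because it happens not to use the faulty entry, and the remaining cases are precisely where the error would surface, so they are not safely ``analogous one-line checks.''

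Second, in the determinant computation the minor obtained by deleting row $y$ and column $x$ is \emph{not} triangular with all diagonal entries equal to $1$: after the natural reindexing its diagonal pairs row $x$ with column $y$, where the entry is $-1$, so the minor equals $-1$. As you wrote it, the product of the entry $-1$, the cofactor sign $(-1)^{1+2}=-1$, and a claimed minor of $+1$ gives $+1$, i.e.\ the wrong sign; the correct bookkeeping is $(-1)\cdot(-1)\cdot(-1)=-1$. Both errors are repairable and are exactly the sign pitfalls you flagged, but as stated the argument does not yet establish either claim. The paper's route---add every column not indexed by $x$ or $y$ to the $y$-column, then swap the $x$- and $y$-columns to reach a diagonal matrix with exactly two entries $-1$---avoids the minor-sign bookkeeping entirely and is worth adopting.
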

\begin{proof}
To see that $\det T_{xy} = -1$, add all columns which are not indexed by $x$ or $y$ to the $y$-column and swap the $x$- and $y$-column. The resulting matrix is a diagonal matrix with entries -1 at exactly two positions and 1 otherwise. The fact that $T_{xy}^{-1} = T_{yx}$ is a simple computation.\qed
\end{proof}
\begin{prop}
	\label{prop:TTransformsA}
	For $x,y \in V$, $x \neq y$, we have
	\begin{equation}
	\label{eq:TTransformsA}
	T_{xy} A_x (T_{xy})^t = A_y
	\end{equation}
	and 
	\begin{equation}
	\label{eq:DeterminantOfAIndependent}
	\det M_x' = \det A_x = \det A_y = \det M_y'.
	\end{equation}
\end{prop}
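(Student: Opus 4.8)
The plan is to reduce the entire four-term chain (\ref{eq:DeterminantOfAIndependent}) to the single congruence identity (\ref{eq:TTransformsA}), after first disposing of the two outer equalities by an elementary cofactor argument. From (\ref{eq:DefinitionM}) one checks directly that $M_y(y,v) = M_y(u,y) = 0$ for all $u,v \in V$, so by (\ref{eq:DefinitionA}) the matrix $A_y$ agrees with $M_y$ everywhere except that its $(y,y)$-entry is $1$ rather than $0$. In particular the $y$-row of $A_y$ is the unit vector $\delta_y$, and Laplace expansion along it leaves exactly the minor obtained by deleting the $y$-row and $y$-column, namely $\det M_y'$; hence $\det A_y = \det M_y'$, and symmetrically $\det A_x = \det M_x'$. (This is the same observation already used in the proof of Proposition \ref{prop:MatrixOfTriangleDefectsHasPositiveDeterminant}.) It therefore remains only to prove the congruence (\ref{eq:TTransformsA}), from which $\det A_x = \det A_y$ will follow.

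To establish $T_{xy} A_x (T_{xy})^t = A_y$, I would first decode (\ref{eq:DefinitionT}) as a pair of elementary operations. Left multiplication by $T_{xy}$ sends the $w$-row of a matrix to (row $w$) $-$ (row $y$) when $w \notin \menge{x,y}$, to $-$(row $y$) when $w = x$, and to $-$(row $x$) when $w = y$; right multiplication by $(T_{xy})^t$ performs the identical operations on the columns. With this dictionary the claim becomes an entrywise check: writing $(T_{xy} A_x (T_{xy})^t)(w,w') = \sum_{u,v} T_{xy}(w,u)\, A_x(u,v)\, T_{xy}(w',v)$ and inserting the at most two nonzero entries in each relevant row of $T_{xy}$, the generic case $w,w' \notin \menge{x,y}$ reduces to the combination $A_x(w,w') - A_x(w,y) - A_x(y,w') + A_x(y,y)$. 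Substituting the explicit values from (\ref{eq:DefinitionM}) and using the symmetry of $d$, every term containing $d(\cdot,x)$ cancels and the constant $d(x,y)$ cancels against $A_x(y,y) = d(x,y)$, leaving precisely $\tfrac12(d(w,y) + d(y,w') - d(w,w')) = M_y(w,w') = A_y(w,w')$.

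The main obstacle, and the only genuine labor, is that this verification must be repeated for each position of $w$ and $w'$ relative to $\menge{x,y}$: when an index equals $x$ or $y$ the corresponding row of $T_{xy}$ has a single nonzero entry rather than two, and the $x$-row and $x$-column of $A_x$ equal $\delta_x$, so these boundary cases behave differently. Each reduces to a short identity; for example $(w,w') = (y,y)$ collapses to $A_x(x,x) = 1 = A_y(y,y)$, while the mixed cases reproduce the corresponding $M_y$-entries after the same cancellation of the $d(\cdot,x)$-terms. Once (\ref{eq:TTransformsA}) is in hand, I take determinants and invoke Lemma \ref{lemma:DeterminantOfT}: since $\det T_{xy} = -1$, we obtain $\det A_y = \det T_{xy} \cdot \det A_x \cdot \det (T_{xy})^t = (-1)^2 \det A_x = \det A_x$. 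Combining this with the two cofactor identities yields $\det M_x' = \det A_x = \det A_y = \det M_y'$, which is exactly (\ref{eq:DeterminantOfAIndependent}).
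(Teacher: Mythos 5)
Your proposal is correct and follows essentially the same route as the paper: an entrywise verification of $T_{xy}A_x(T_{xy})^t = A_y$ split into cases according to the positions of the indices relative to $\menge{x,y}$, followed by taking determinants via Lemma \ref{lemma:DeterminantOfT} and Laplace expansion along the $y$-row of $A_y$ to get $\det A_y = \det M_y'$. Your reading of $T_{xy}$ as a combined row-reordering/row-subtraction operation and your explicit generic-case computation match the paper's Cases 1--6, just organized slightly differently (outer cofactor identities first, congruence second).
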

\begin{proof}
For simplicity, let $T = T_{xy}$. Since $(A_x)^t = A_x$, it is clear that $T A_x T^t$ is also symmetric. We now compute
$$(T A_x T^t)(w,z) =  \sum_{a \in V} \sum_{b \in V} T(w,a) \cdot A_x(a,b) \cdot T(z,b) =: \spadesuit$$
for all $w,z \in V$.
\begin{enumerate}[label={Case \arabic*:}, align=left]
	\item $w = z = y$.
	$$\spadesuit = \sum_b T(y,x) \cdot A_x(x,b) \cdot T(y,b) = (-1) \cdot1 \cdot (-1) = 1 = A_y(y,y)$$
	\item $w = z = x$.
	$$\spadesuit = \sum_b T(x,y) A_x(y,b) T(x,b) = (-1) \cdot A_x(y,y) \cdot (-1) = d(x,y) = A_y(x,x)$$
	\item $w = z, w \notin \menge{x,y}$:
	\begin{align*}
	\spadesuit & = \sum_b T(w,w) A_x(w,b)T(w,b) + \sum_b T(w,y) A_x(y,b) T(w,b)\\
	& = A_x(w,w) - A_x(w,y) - A_x(y,w) + A_x(y,y)\\
	& = d(x,w) - (d(w,x) + d(x,y) - d(w,y)) + d(x,y) = d(w,y) = A_y(w,w)
	\end{align*}
	\item $w = y, z \neq y$.
	\begin{align*}
	\spadesuit & = \sum_b T(y,x) A_x(x,b) T(z,b) = (-1) (A_x(x,y) T(z,y) + A_x(x,z) T(z,z))\\
	& = - \underbrace{A_x(x,z)}_{= \delta_{xz}} \underbrace{T(z,z)}_{= 1- \delta_{xz}} = 0 = A_y(y,z)
	\end{align*}
	\item $w = x, z \notin \menge{x,y}$.
	\begin{align*}
	\spadesuit & = \sum_b T(x,y) A_x(y,b) T(z,b) = - (A_x(y,z) - A_x(y,y))\\
	& = d(x,y) - \frac{1}{2}(d(x,y) + d(x,z) - d(y,z)) = \frac{1}{2}( d(x,y) + d(y,z) - d(x,z)) \\
	& = A_y(x,z)
	\end{align*}
	\item $w \neq z$ and $w,z \notin \menge{x,y}$.
	\begin{align*}
	\spadesuit & = \sum_b A_x(w,b) T(z,b) - \sum_b A_x(y,b) T(z,b)\\
	& = (A_x(w,z) - A_x(w,y)) - (A_x(y,z) - A_x(y,y))\\
	& = \frac{1}{2}\ls d(x,z) - d(x,y) - d(w,z) + d(w,y) - d(x,y) - d(x,z) + d(y,z)\rs + d(x,y)\\
	& = \frac{1}{2}(d(w,y) + d(y,z) - d(w,z)) = A_y(w,z)
	\end{align*}
\end{enumerate}
This concludes the proof of (\ref{eq:TTransformsA}). By Lemma \ref{lemma:DeterminantOfT}, we have
$$\det A_y = \det( T_{xy}A_x (T_{xy})^t ) = \det T_{xy} \det A_x \det T_{xy} = (-1) \det A_x (-1)  = \det A_x.$$
This implies (\ref{eq:DeterminantOfAIndependent}) by applying Laplace expansion to obtain $\det A_y = \det M_y'$.\qed
\end{proof}
In what follows, we denote by $A_{xy}$ the matrix that results from replacing the $y$-column in $A_x$ with $b_x$. Then, Cramer's rule applied to the linear equation system $A_x \cdot c(\cdot, x) = b_x$ yields
$$c(y,x) = \frac{\det A_{xy}}{\det A_x}.$$
From Proposition \ref{prop:TTransformsA}, we already know how the transformation $A \mapsto T_{xy} A (T_{xy})^t$ acts on the matrix $A_x$. We will now investigate its effects on $A_{xy}$.
\begin{lem}
	\label{lemma:Laplace}
	Let $x,y \in V$, $x \neq y$ and $A := T_{xy} A_{xy} T_{xy}^t$. Then,
	$$A(w,x) = \delta_{wx} \text{  and  } A(w,y) = \delta_{wy}$$
	holds for all $w \in V$.
\end{lem}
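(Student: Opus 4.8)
The plan is to exploit the extreme sparsity of $T_{xy}$. Reading off the definition (\ref{eq:DefinitionT}), the only nonzero entry in the $x$-row is $T_{xy}(x,y) = -1$, and the only nonzero entry in the $y$-row is $T_{xy}(y,x) = -1$; that is, $T_{xy}(x,b) = -\delta_{yb}$ and $T_{xy}(y,b) = -\delta_{xb}$. Since $A(w,z) = \sum_{a}\sum_{b} T_{xy}(w,a)\,A_{xy}(a,b)\,T_{xy}(z,b)$, the $x$-column of $A$ picks up only the $x$-row of $T_{xy}$ through the transpose factor, and the $y$-column only the $y$-row. So in both cases the inner double sum collapses to a single sum, and these two special rows are exactly what makes this happen.

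First I would handle the $y$-column. Using $T_{xy}(y,b) = -\delta_{xb}$ gives $A(w,y) = -\sum_{a} T_{xy}(w,a)\,A_{xy}(a,x)$. Since $A_{xy}$ is obtained from $A_x$ by replacing only the $y$-column and $x \neq y$, the $x$-column is untouched, so $A_{xy}(a,x) = A_x(a,x)$; and $A_x(a,x) = \delta_{ax}$, because $A_x(x,x) = 1$ while for $a \neq x$ one has $A_x(a,x) = M_x(a,x) = \tfrac{1}{2}(d(a,x) + d(x,x) - d(a,x)) = 0$. Hence $A(w,y) = -T_{xy}(w,x)$, and inspecting (\ref{eq:DefinitionT}) shows $T_{xy}(w,x) = 0$ for $w \neq y$ and $T_{xy}(y,x) = -1$, so $A(w,y) = \delta_{wy}$.

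For the $x$-column I would use $T_{xy}(x,b) = -\delta_{yb}$ to get $A(w,x) = -\sum_{a} T_{xy}(w,a)\,A_{xy}(a,y)$. Here the $y$-column of $A_{xy}$ is by construction $b_x$, i.e. $A_{xy}(a,y) = 1 - \delta_{xa}$, so $A(w,x) = -\big(\sum_{a} T_{xy}(w,a)\big) + T_{xy}(w,x)$. It then remains to combine the $w$-row sum of $T_{xy}$ with the entry $T_{xy}(w,x)$, which I would do by the three cases $w \notin \menge{x,y}$, $w = x$, $w = y$: the row sums are $0$, $-1$, $-1$ respectively and $T_{xy}(w,x)$ is $0$, $0$, $-1$, yielding $A(w,x) = 0, 1, 0 = \delta_{wx}$.

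There is no real obstacle; the argument is bookkeeping against the definition of $T_{xy}$. The only points demanding care are reading (\ref{eq:DefinitionT}) correctly in the boundary rows $w = x$ and $w = y$, where the diagonal rule $w = z,\, w \notin \menge{x,y}$ does not fire, and keeping straight that $A_{xy}$ and $A_x$ differ solely in the $y$-column, so that the $x$-column computation may reuse the identity $A_x(\cdot,x) = \delta_{\cdot\,x}$ without modification.
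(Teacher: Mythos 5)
Your proof is correct and takes essentially the same route as the paper's: both exploit that the $x$- and $y$-rows of $T_{xy}$ each have a single nonzero entry ($-1$ in positions $y$ and $x$ respectively), which collapses the double sum, and then finish by direct evaluation using $A_{xy}(\cdot,y)=b_x$ and $A_{xy}(\cdot,x)=A_x(\cdot,x)=\delta_{\cdot x}$. The only cosmetic difference is that you organize the computation column-by-column with a closed-form expression, whereas the paper cases on the row index $w$.
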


\begin{proof}
	For simplicity, let $T = T_{xy}$. For $z = x$ or $z = y$ and $b \in V$, we have
	$$T(z,b) = \begin{cases}
	-1 & , z = x, b = y \text{ or } z= y, b = x\\
	0 & , \text{ otherwise}
	\end{cases}$$
	We consider three cases and compute
	$$A(w,z) = \sum_{a \in V} \sum_{b \in V} T(w,a) A_{xy}(a,b) T(z,b).$$
	\begin{enumerate}[label={Case \arabic*:}, align=left]
		\item $w = x$.
		\begin{align*}
		A(x,x) & = A_{xy}(y,y) = b_x(y) = 1\\
		A(x,y) & = A_{xy}(y,x) = 0
		\end{align*}
		\item $w = y$.
		\begin{align*}
		A(y,x) & = A_{xy}(x,y) = b_x(x)  = 0\\
		A(y,y) & = A_{xy}(x,x) = 1
		\end{align*}
		\item $w \notin \menge{x,y}$.
		\begin{align*}
		A(w,x)& = - A_{xy}(w,y) + A_{xy}(y,y) = b_x(y) - b_x(w) = 0\\
		A(w,y) & = -A_{xy}(w,x) + A_{xy}(y,x) = 0
		\end{align*}
	\end{enumerate}
	This concludes the proof.\qed
\end{proof}

\begin{prop}
	\label{prop:detAdetB}
	Let $x,y \in V$, $x \neq y$, $A := T_{xy} A_{xy} T_{xy}^t$ and $B := T_{yx} A_{yx} T_{yx}^t$. Then, $A(w,z) = B(z,w)$ for all $w,z \notin \menge{x,y}$ and it follows that
	$$\det A  = \det B.$$
\end{prop}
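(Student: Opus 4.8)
\section*{Proof proposal}

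The plan is to prove the entrywise identity $A(w,z) = B(z,w)$ on $V' := V \setminus \menge{x,y}$ by a direct computation in the same spirit as Proposition \ref{prop:TTransformsA}, and then to deduce the determinant equality by combining this identity with Lemma \ref{lemma:Laplace} and the transpose-invariance of the determinant.

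First I would expand the triple product $A(w,z) = \sum_{a,b} T_{xy}(w,a) \, A_{xy}(a,b) \, T_{xy}(z,b)$ for $w,z \in V'$. By (\ref{eq:DefinitionT}), each row of $T_{xy}$ indexed by a vertex in $V'$ has only two nonzero entries, namely a $1$ on the diagonal and a $-1$ in column $y$, so the double sum collapses to the four terms $A_{xy}(w,z) - A_{xy}(w,y) - A_{xy}(y,z) + A_{xy}(y,y)$. The key bookkeeping is that $A_{xy}$ agrees with $A_x$ away from the $y$-column while its $y$-column equals $b_x$; since $w \neq x$ and $y \neq x$, one has $A_{xy}(w,y) = b_x(w) = 1$ and $A_{xy}(y,y) = b_x(y) = 1$, and these cancel. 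Because $w,z \in V'$ forces the generic branch of (\ref{eq:DefinitionA}), the surviving terms are entries of $M_x$, leaving $A(w,z) = M_x(w,z) - M_x(y,z)$. Substituting (\ref{eq:DefinitionM}) and using (\ref{eq:M2}) gives $A(w,z) = \tfrac12\big(d(w,x) - d(x,y) + d(y,z) - d(w,z)\big)$. Carrying out the completely analogous computation for $B$ with the roles of $x$ and $y$ exchanged yields $B(z,w) = M_y(z,w) - M_y(x,w) = \tfrac12\big(d(y,z) - d(x,y) + d(w,x) - d(w,z)\big)$, which is the same expression after reordering the symmetric terms. This establishes $A(w,z) = B(z,w)$, i.e. $A\restr_{V'} = (B\restr_{V'})^t$.

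To pass from the restricted matrices to the full determinants, I would invoke Lemma \ref{lemma:Laplace}, which says that the $x$- and $y$-columns of $A$ equal the standard basis vectors $e_x$ and $e_y$; the version of that lemma with $x$ and $y$ interchanged gives the same statement for $B$. Expanding $\det A$ by Laplace along these two columns, each contributes a single nonzero entry equal to $1$ sitting on the diagonal, so the cofactor signs are trivial and the determinant collapses to the minor over $V'$, giving $\det A = \det(A\restr_{V'})$ and, identically, $\det B = \det(B\restr_{V'})$. Finally, since a matrix and its transpose share the same determinant, $\det(A\restr_{V'}) = \det\big((B\restr_{V'})^t\big) = \det(B\restr_{V'})$, so $\det A = \det B$.

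I expect the only genuine obstacle to be the entrywise computation in the middle step: the vertices $x$ and $y$ must be excluded from $V'$ throughout, both so that the generic (off-diagonal) branch of (\ref{eq:DefinitionA}) applies and so that the four quantities $b_x(w), b_x(y), b_y(z), b_y(x)$ all evaluate to $1$. A single sign slip in the $-1$ entries of $T_{xy}$, or a failure to use the replaced $y$-column of $A_{xy}$, would destroy the cancellation that reduces the four-term sum to $M_x(w,z) - M_x(y,z)$. Once the two $\tfrac12(\cdots)$ expressions are written out, their coincidence is immediate from symmetry of $d$, and the determinant reduction is then purely formal.
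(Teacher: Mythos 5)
Your proposal is correct and follows essentially the same route as the paper: the same collapse of the triple product to the four-term sum $A_{xy}(w,z)-A_{xy}(w,y)-A_{xy}(y,z)+A_{xy}(y,y)$, the same cancellation via the replaced column $b_x$, the same symmetric expression $\tfrac12(d(x,w)+d(y,z)-d(w,z)-d(x,y))$ identifying $A(w,z)$ with $B(z,w)$, and the same reduction of the determinants to the $V\setminus\{x,y\}$ minors via Lemma \ref{lemma:Laplace} and transpose-invariance. No gaps.
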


\begin{proof}
	We have
	\begin{align*}
	A(w,z) & = \sum_{a \in V} \sum_{b \in V} T_{xy}(w,a) A_{xy}(a,b) T_{xy}(z,b) \\
	& = \sum_b A_{xy}(w,b) T(z,b) - A_{xy}(y,b) T(z,b)\\
	& = A_{xy}(w,z) - \underbrace{A_{xy}(w,y)}_{=1} - (A_{xy}(y,z) - \underbrace{A_{xy}(y,y)}_{=1})\\
	& = \frac{1}{2}(d(x,w) + d(y,z) - d(w,z) - d(y,x))\\
	& = A_{yx}(z,w) - A_{yx}(x,w) - (\underbrace{A_{yx}(z,x)}_{=1} - \underbrace{A_{yx}(x,x)}_{=1})\\
	& = \sum_b A_{yx}(z,b) T_{yx}(w,b) - A_{yx}(x,b) T_{yx}(w,b)\\
	& = \sum_a\sum_b T_{yx}(z,a) A_{yx}(a,b) T_{yx}(w,b) = B(z,w).
	\end{align*}
	Hence, we compute
	$$\det A = 1 \cdot 1 \cdot \det\lb (A(w,z))_{w,z \in V \setminus \menge{x,y}} \rb = 1 \cdot 1 \cdot \det\lb B(z,w)_{z,w \in V \setminus \menge{x,y}} \rb = \det B$$
	by applying Lemma \ref{lemma:Laplace} and Laplace expansion.	\qed
\end{proof}
We can now combine these computations to obtain the symmetry of the solution matrix $c$ of (\ref{eq:LES}).
\begin{prop}
	\label{prop:Symmetry}
	For $x,y \in V$, we have $c(x,y) = c(y,x)$.
\end{prop}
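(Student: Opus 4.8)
The plan is to read off both $c(x,y)$ and $c(y,x)$ from Cramer's rule and then match the resulting determinant ratios using the structural identities already established. The case $x = y$ is trivial, so assume $x \neq y$. The excerpt already records that applying Cramer's rule to $A_x \cdot c(\cdot, x) = b_x$ gives $c(y,x) = \det A_{xy} / \det A_x$, where $A_{xy}$ is obtained from $A_x$ by replacing its $y$-column with $b_x$. Running the identical argument with the roles of $x$ and $y$ interchanged, i.e. applying Cramer's rule to $A_y \cdot c(\cdot, y) = b_y$, yields $c(x,y) = \det A_{yx} / \det A_y$. Thus proving symmetry amounts to showing that these two fractions coincide, for which I would treat the denominators and numerators separately.

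For the denominators, Proposition \ref{prop:TTransformsA} (specifically (\ref{eq:DeterminantOfAIndependent})) already gives $\det A_x = \det A_y$, so the denominators agree outright.

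For the numerators, I would invoke the congruence identities of the auxiliary matrices. In the notation of Proposition \ref{prop:detAdetB}, set $A := T_{xy} A_{xy} T_{xy}^t$ and $B := T_{yx} A_{yx} T_{yx}^t$. Since $\det T_{xy} = \det T_{yx} = -1$ by Lemma \ref{lemma:DeterminantOfT}, multiplicativity of the determinant gives $\det A = (\det T_{xy})^2 \det A_{xy} = \det A_{xy}$ and likewise $\det B = \det A_{yx}$. Proposition \ref{prop:detAdetB} asserts $\det A = \det B$, so combining these identities yields $\det A_{xy} = \det A_{yx}$.

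Putting the two observations together gives $c(y,x) = \det A_{xy}/\det A_x = \det A_{yx}/\det A_y = c(x,y)$, which is the claim. I do not expect a genuine obstacle here: all the substantive work has been front-loaded into Propositions \ref{prop:TTransformsA} and \ref{prop:detAdetB} and Lemma \ref{lemma:DeterminantOfT}, and the present step is purely a bookkeeping assembly. The only point demanding care is the bookkeeping itself, namely keeping straight which column is replaced in which matrix when invoking Cramer's rule (so that $c(y,x)$ is paired with $A_{xy}$ and $c(x,y)$ with $A_{yx}$), and checking that the two factors of $-1$ from the congruence transformation square away so as to leave $\det A_{xy}$ and $\det A_{yx}$ untouched.
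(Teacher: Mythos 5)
Your proposal is correct and follows essentially the same route as the paper: Cramer's rule for both $c(y,x) = \det A_{xy}/\det A_x$ and $c(x,y) = \det A_{yx}/\det A_y$, equality of denominators via Proposition \ref{prop:TTransformsA}, and equality of numerators via Proposition \ref{prop:detAdetB} together with $(\det T_{xy})^2 = 1$ from Lemma \ref{lemma:DeterminantOfT}. The only difference is cosmetic: you make the cancellation of the two factors of $-1$ explicit, whereas the paper absorbs it into the chain of determinant identities.
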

\begin{proof}
	For $x = y$, there is nothing to show. For $x \neq y$, it follows from Proposition \ref{prop:detAdetB} that
	$$\det A_{xy} = \det( T_{xy} A_{xy} T_{xy}^t) = \det( T_{yx} A_{yx} T_{yx}^t) = \det A_{yx}.$$
	Hence, 
	\begin{equation*}
	c(x,y) = \frac{\det A_{yx}}{\det A_y} = \frac{\det A_{xy}}{\det A_x} = c(y,x). \tag*{\qed}
	\end{equation*}
\end{proof}

Since $c$ is a symmetric matrix with a vanishing diagonal and non-negative entries, it defines a graph $G = (V,c)$. At this point we do not know whether this graph is connected. We do know, however, that it has no isolated vertices since this would correspond to a column in $c$ consisting only of zeros. This is clearly impossible due to the form of (\ref{eq:LES}). Since there are no isolated vertices, the Laplacian of $G$  is well-defined. By Lemma \ref{lemma:PotentialIffGraphConnected}, ($\ref{eq:DirichletProblem}$) has a solution for all $x,y \in V$ if and only if $G$ is connected and we will show that $\phi^{xy}(z) := M_y(x,z)$ is such a solution for the Laplacian of $G$, implying that $G$ is connected.

\begin{prop}
	\label{prop:CAndMyAreCompatible}
	For $x,y \in V$, $x \neq y$, let $\phi^{xy}(z) := M_y(x,z)$. Then,
	\begin{align*}
	\Delta \phi^{xy} & = \frac{1}{c_x} \mathds{1}_x - \frac{1}{c_y} \mathds{1}_y\\
	\phi^{xy}(y) & = 0 ~.
	\end{align*}
	In particular, $d(x,y)$ is the effective resistance between $x$ and $y$ in the graph $(V,c)$.
\end{prop}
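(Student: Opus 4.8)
The boundary condition is immediate: $\phi^{xy}(y) = M_y(x,y) = \tfrac{1}{2}(d(x,y) + d(y,y) - d(x,y)) = 0$ since $d(y,y) = 0$. Note also that $\Delta$ is well-defined here, because the discussion preceding the statement rules out isolated vertices, so $0 < c_w < \infty$ for every $w$; connectedness is \emph{not} assumed and will instead come out as a consequence. The substantive task is the interior equation, and the plan is to fix an arbitrary $w \in V$ and show that $c_w \cdot (\Delta\phi^{xy})(w) = \delta_x(w) - \delta_y(w)$.

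Expanding $\phi^{xy}(z) = \tfrac{1}{2}(d(x,y) + d(y,z) - d(x,z))$ in $c_w\phi^{xy}(w) - \sum_z c(w,z)\phi^{xy}(z)$ and using $\sum_z c(w,z) = c_w$, the constant term $d(x,y)$ cancels, leaving
\[
c_w(\Delta\phi^{xy})(w) = \tfrac{1}{2}F_y(w) - \tfrac{1}{2}F_x(w), \qquad F_u(w) := \sum_{z} c(w,z)\bigl(d(u,w) - d(u,z)\bigr).
\]
The heart of the argument is the evaluation of $F_u(w)$. Using the identity $d(u,w) - d(u,z) = 2M_w(u,z) - d(w,z)$, which is just the definition of the defect $M_w$, I split $F_u(w) = 2\sum_z c(w,z)M_w(u,z) - \sum_z c(w,z)d(w,z)$. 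Now comes the crucial input: since $c$ solves (\ref{eq:LES}) at the vertex $w$ and is symmetric by Proposition \ref{prop:Symmetry}, the first sum equals $\sum_z M_w(u,z)c(z,w) = b_w(u) = 1 - \delta_w(u)$ (the case $u = w$ being trivial as $M_w(w,\cdot) \equiv 0$), while the second sum, call it $S(w)$, is independent of $u$. Hence $F_u(w) = 2(1 - \delta_w(u)) - S(w)$.

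Substituting this back, the unknown $S(w)$ cancels between the $F_y$ and $F_x$ terms, and I am left with $(1 - \delta_w(y)) - (1 - \delta_w(x)) = \delta_x(w) - \delta_y(w)$; dividing by $c_w$ yields exactly $\tfrac{1}{c_x}\mathds{1}_x - \tfrac{1}{c_y}\mathds{1}_y$. With both parts of the Dirichlet problem verified for all $x \neq y$, Lemma \ref{lemma:PotentialIffGraphConnected} forces $G = (V,c)$ to be connected, and evaluating $\phi^{xy}(x) = M_y(x,x) = d(x,y)$ identifies $d(x,y)$ as the effective resistance between $x$ and $y$.

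I expect the main obstacle to be spotting \emph{which} system of the family (\ref{eq:LES}) to invoke. The naive approach mimics Proposition \ref{prop:CIsSolutionOfLES} and reaches for the systems indexed by $x$ and $y$, but those only pin down the columns $c(\cdot,x)$ and $c(\cdot,y)$ and fail to close the computation. The decisive point is that the relevant system is the one indexed by the \emph{evaluation point} $w$: it is precisely this that converts $\sum_z c(w,z)M_w(u,z)$ into $1 - \delta_w(u)$ and makes the $u$-independent remainder $S(w)$ drop out. The symmetry of $c$ established in Proposition \ref{prop:Symmetry} is indispensable here, as it is what allows the coefficient $c(w,z)$ appearing in $\Delta$ to be read as $c(z,w)$ in (\ref{eq:LES}).
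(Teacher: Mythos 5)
Your proof is correct and rests on the same key mechanism as the paper's: evaluating $c_w(\Delta\phi^{xy})(w)$ reduces, via the symmetry of $c$ from Proposition \ref{prop:Symmetry}, to the rows $x$ and $y$ of the system (\ref{eq:LES}) indexed by the \emph{evaluation point} $w$. The only difference is organizational --- you run one uniform computation in which the $u$-independent term $S(w)$ cancels, whereas the paper splits into the three cases $w=x$, $w=y$, $w\notin\menge{x,y}$ and invokes ad hoc identities among the $M$'s --- so the two arguments are essentially the same.
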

\begin{proof}
First, observe some basic properties of $M_y$. For $w,x,z \in V$, we have
\begin{align}
M_y(x,x) & = d(x,y) = M_x(y,y)\label{eq:PropM1}\\
M_y(x,z) & = M_y(z,x)\label{eq:PropM2}\\
M_y(x,z) & = d(x,y) - M_x(y,z)\label{eq:PropM3}\\
M_y(x,w) - M_y(x,z) & = M_w(y,z) - M_w(x,z)\label{eq:PropM4}\\
& = M_z(x,w) - M_z(y,w).\notag
\end{align}
By (\ref{eq:LES}), we have $c(x,x) = 0$ for all $x \in V$ and
\begin{align*}
1 & = (A_y \cdot c(\cdot, y))(x) = \sum_{w \in V} M_y(x,w) c(w,y)
\end{align*}
for all $x,y \in V$ such that $x \neq y$. Using (\ref{eq:PropM1}) and (\ref{eq:PropM3}), we compute
\begin{align*}
c_x \cdot \Delta \phi^{xy}(x) & = c_x \phi^{xy}(x) - \sum_w c(x,w) \phi^{xy}(w) = c_x M_y(x,x) - \sum_w c(w,x) M_y(x,w)\\
& = c_x d(x,y) - \sum_w c(w,x) (d(x,y) - M_x(y,w)) = \sum_w  M_x(y,w) c(w,x)\\
& = (A_x \cdot c(\cdot, x))(y) = 1.
\end{align*}
For $z \notin \menge{x,y}$, we have
$$\sum_w M_z(x,w) c(w,z) = (A_z \cdot c(\cdot, z))(x) = 1 = (A_z \cdot c(\cdot, z))(y) = \sum_w M_z(y,w) c(w,z).$$
Combining this with (\ref{eq:PropM4}), we get
\begin{align*}
c_z (\Delta \phi^{xy})(z) & = c_z \phi^{xy}(z) - \sum_w c(z,w) \phi^{xy}(w) = c_z M_y(x,z) - \sum_w c(w,z) M_y(x,w)\\
& = \sum_w c(w,z) [M_y(x,z) - M_y(x,w)] =  \sum_w c(w,z) [M_z(y,w) - M_z(x,w)]\\
& = 1- 1 = 0. 
\end{align*}
Lastly, $M_y(x,y) = 0$ yields
\begin{align*}
c_y (\Delta \phi^{xy})(y) & = c_y \phi^{xy}(y) - \sum_{w} c(y,w) \phi^{xy}(w) = c_y M_y(x,y) - \sum_w c(w,y) M_y(x,w)\\
& = 0 - 1 = -1.
\end{align*}
Now that we have established that $\phi^{xy} = M_y(x,\cdot)$ solves the Dirichlet problem (\ref{eq:DirichletProblem}) on $G$, it is a direct consequence that the effective resistance $R$ of $G$ is in fact the metric $d$.
\begin{equation*}
R(x,y) = \phi^{xy}(x) = M_y(x,x) = d(x,y)\tag*{\qed}
\end{equation*}
\end{proof}

Combining all results of this section, we obtain a concise characterization of finite effective resistance spaces.

\begin{theorem}
	\label{theorem:CharacterizationFiniteERS}
	Let $(V,d)$ be a finite metric space and let $M_y',A_y,b_y$ be as in (\ref{eq:DefinitionM}) through (\ref{eq:DefinitionB}). Then, there exists a graph $G = (V,c)$ with effective resistance $d$ if and only if
	$$\det M_y' > 0$$
	for some $y \in V$ and the family of linear equation systems
	$$A_y \cdot c(\cdot , y) = b_y ~~ , ~ y \in V$$
	admits a solution matrix $(c(x,y))_{x,y \in V}$ with non-negative entries.
\end{theorem}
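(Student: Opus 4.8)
The plan is to assemble the theorem from the propositions already proved in this section, treating the two directions of the equivalence separately. For the forward implication I would assume $(V,d)$ is an ERS with associated graph $G = (V,c)$. Then Proposition \ref{prop:CIsSolutionOfLES} shows that for every $y \in V$ the vector $c(\cdot, y)$ solves $A_y \cdot c(\cdot, y) = b_y$, so the family (\ref{eq:LES}) is solved by the matrix $(c(x,y))_{x,y \in V}$; since $c$ is a weight function, its entries are non-negative. Proposition \ref{prop:MatrixOfTriangleDefectsHasPositiveDeterminant} gives $\det M_y' = \det A_y > 0$ for every $y$, in particular for some $y$, which is exactly the stated condition.

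For the converse I would assume $\det M_y' > 0$ for some $y$ and that (\ref{eq:LES}) admits a non-negative solution matrix $c$. The first step is to upgrade the quantifier on $y$: by identity (\ref{eq:DeterminantOfAIndependent}) in Proposition \ref{prop:TTransformsA} --- which is a pure identity on the metric $d$ and does not presuppose that $(V,d)$ is an ERS --- one has $\det M_x' = \det M_y'$ for all $x,y$, hence $\det A_y = \det M_y' > 0$ for every $y$. Each system is therefore uniquely solvable, so $c$ is well-defined, and Proposition \ref{prop:Symmetry} yields $c(x,y) = c(y,x)$. Together with the vanishing diagonal forced by the $y$-row of $A_y$ and the assumed non-negativity, this makes $c$ a legitimate weight function, so $G = (V,c)$ is a graph.

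It then remains to verify that $G$ has effective resistance $d$. I would first note that $G$ has no isolated vertices, since an isolated $y$ would make $c(\cdot, y)$ the zero vector, contradicting $A_y \cdot c(\cdot, y) = b_y \neq 0$; hence $c_x > 0$ for all $x$ and the Laplacian is well-defined. Proposition \ref{prop:CAndMyAreCompatible} shows that $\phi^{xy}(z) := M_y(x,z)$ solves the Dirichlet problem (\ref{eq:DirichletProblem}) on $G$, so by Lemma \ref{lemma:PotentialIffGraphConnected} the graph $G$ is connected, and the same proposition identifies $R(x,y) = \phi^{xy}(x) = M_y(x,x) = d(x,y)$. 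Thus $(V,d)$ is an ERS.

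Since all the analytic content lives in the cited propositions, no hard computation remains; the only points requiring care are the quantifier upgrade from ``some $y$'' to ``all $y$'', which relies on the determinant being an invariant of the metric rather than of the ERS structure, and the short isolated-vertex argument needed to license the use of Lemma \ref{lemma:PotentialIffGraphConnected}.
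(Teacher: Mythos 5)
Your proposal is correct and follows essentially the same route as the paper, which proves the theorem by assembling Corollary \ref{cor:NecessaryCondition} (itself Propositions \ref{prop:CIsSolutionOfLES} and \ref{prop:MatrixOfTriangleDefectsHasPositiveDeterminant}) for necessity and Propositions \ref{prop:TTransformsA}, \ref{prop:Symmetry} and \ref{prop:CAndMyAreCompatible} together with Lemma \ref{lemma:PotentialIffGraphConnected} for sufficiency. Your explicit remark that the quantifier upgrade from ``some $y$'' to ``all $y$'' rests on (\ref{eq:DeterminantOfAIndependent}) being an identity of the metric alone is exactly the point the paper leaves implicit, and your isolated-vertex argument matches the paper's.
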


\begin{exa}[Geodesic metric of cycle graph]
	We give an example for a naturally occurring metric space which is not representable as an effective resistance. Let $V = \menge{v_0,\ldots, v_3}$ and consider the graph $\mathcal{C}_4 = (V, c)$ where 
$$c(v_i, v_j) = \begin{cases}
1 & , ~ |i-j| \in \menge{1,3}\\
0 & , ~ \text{ otherwise}
\end{cases},$$
see Figure \ref{fig:CycleGraph4}. 
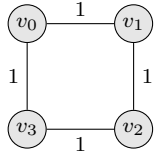
\begin{figure}
	\centering
	\begin{tikzpicture}[-,auto, node distance = 1.4cm, every loop/.style={}, vertex/.style={draw, circle, fill=black!10, inner sep=0mm, minimum size=5mm}]
	\node[vertex] (A) {$v_0$};
	\node[vertex, right of= A] (B) {$v_1$};
	\node[vertex,below of =B] (C) {$v_2$};
	\node[vertex,left of=C] (D) {$v_3$};
	
	\path 	(A) edge node {$1$} (B)
	(B) edge node {$1$} (C)
	(C) edge node {$1$} (D)
	(D) edge node {$1$} (A);
	\end{tikzpicture}
	\caption{\label{fig:CycleGraph4}The cycle-graph $\mathcal{C}_4$.}
\end{figure}
The \emph{geodesic metric} $d_G(x,y)$ between vertices $x,y \in V$ of a weighted graph $G = (V',c')$ is defined as the length of the shortest path from $x$ to $y$, i.e.,
$$d_G(x,y) = \inf\mengedef{\sum_{k=1}^{n-1} c'(x_k,x_{k+1})^{-1}}{(x_1,\ldots, x_n) \text{ path from } x \text{ to } y \text{ in } G}.$$
For $\mathcal{C}_4$, we have
$$(d_{\mathcal{C}_4}(v_i,v_j))_{i,j\in \menge{0,\ldots,3}} = \begin{pmatrix}
0 & 1 & 2 & 1\\
1 & 0 & 1 & 2\\
2 & 1 & 0 & 1\\
1 & 2 & 1 & 0 
\end{pmatrix} \text{ and } M_{v_0}' = \begin{pmatrix}
1 & 1 & 0\\
1 & 2 & 1\\
0 & 1 & 1
\end{pmatrix}.$$
Hence, $\det M_{v_0}' = 0$ and thus $(V,d_{\mathcal{C}_4})$ is no effective resistance space.
\end{exa}
\begin{exa}
	This example demonstrates that the second condition in Theorem \ref{theorem:CharacterizationFiniteERS}, namely that the family of linear equation systems admits a solution with non-negative entries, can not be dropped. Moreover, it does not suffice to check ${c(\cdot,y) \geq 0}$ for only one $y \in V$.
	Consider the metric space $(V, d)$ where $V = \menge{v_0,v_1,v_2,v_3}$ and $d$ is given by
	$$ (d(v_i,v_j))_{i,j \in \menge{0,\ldots,3}} = \frac{1}{260}
	\begin{pmatrix}
	0 & 23 & 36 & 40 \\ 
	23 & 0 & 39 & 23 \\ 
	36 & 39 & 0 & 36 \\ 
	40 & 23 & 36 & 0
	\end{pmatrix}.$$
	It follows that 
	$$M_{v_0}' = \frac{1}{130}\begin{pmatrix}
	23 & 10 & 20\\
	10 & 36 & 20\\
	20 & 20 & 40  
	\end{pmatrix}$$ and thus $\det M_{v_0}' = \frac{26}{4225} > 0$. Hence, the family of linear equation systems (\ref{eq:LES}) has a unique solution matrix $c$. This matrix has two negative entries. Interpreting it as a graph with negative edge weights, we get $G_- = (V,c)$ which is shown in Figure \ref{fig:GraphOneNegativeEdge}.
	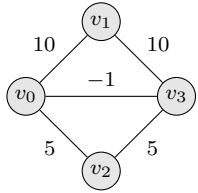
\begin{figure}
		\centering
		\begin{tikzpicture}[-,auto, node distance = 1.4cm, every loop/.style={}, vertex/.style={draw, circle, fill=black!10, inner sep=0mm, minimum size=5mm}]
		\node[vertex] (A) {$v_0$};
		\node[vertex, above right of= A] (B) {$v_1$};
		\node[vertex, below right of =A] (C) {$v_2$};
		\node[vertex, below right of=B] (D) {$v_3$};
		
		\path 	
		(A) edge node {$10$} (B)
		(A) edge node [below left] {$5$} (C)
		(A) edge node {$-1$} (D)
		(B) edge node {$10$} (D)
		(C) edge node [below right] {$5$} (D);
		\end{tikzpicture}
		\caption{\label{fig:GraphOneNegativeEdge}The graph $G_-$}
	\end{figure}
	We see that $c(v_0,v_3) < 0$ and this is also the only negative entry of $c$ (apart from $c(v_3,v_0)$).
\end{exa}

\begin{rem}
	In the example above, the energy form $\mathcal{E}$ of $G_-$ satisfies \ref{DF1} and \ref{DF2} but lacks the Markov property (cf. Definition \ref{dfn:DirichletForm}).
\end{rem}

\section{Resistance metrics}
\label{section:ResistanceMetrics}
We aim to apply Theorem \ref{theorem:CharacterizationFiniteERS} to the theory of \emph{resistance metrics} introduced by Kigami in \cite{Kigami2001}. For the convenience of the reader, we recall some definitions and results.

\begin{dfn}[Kigami]
	\label{dfn:DirichletForm}
	Let $V$ be a finite set. We denote by $l(V)$ the set of all functions $u: V \to\R$. A symmetric quadratic form $\F : l(V) \to \R$ is called a \emph{Dirichlet form} if the following conditions are met.
	\begin{enumerate}[label=(DF-\arabic*), align=left]
		\item \label{DF1}$\F(u) \geq 0$ for all $u \in l(V)$.
		\item \label{DF2}$\F(u) = 0$ if and only if $u$ is constant on $V$.
		\item \label{DF3}For $u \in l(V)$ we have $\F(\tilde{u}) \leq \F(u)$ where
		$$\tilde{u}(x) = \min\{1,\max\{u(x),0\}\}.$$
	\end{enumerate}
	Condition \ref{DF3} called the \emph{Markov property}.
\end{dfn}
A very accessible example for Dirichlet forms are \emph{energy forms} of connected graphs. For such a graph $G = (V,c)$, its energy form $\mathcal{E}_G$ is defined by
\begin{equation}
\mathcal{E}_G(u) := \frac{1}{2}\sum_{x,y \in V} c(x,y) (u(x)-u(y))^2 ~ , ~ u \in l(V).
\end{equation}
The following Lemma states the folkloric result that energy forms of connected graphs are in fact the only examples of Dirichlet forms on finite sets. We include it for future reference. A proof can be found in \cite{Kigami2001}.
\begin{lem}
	\label{lemma:DirichletFormsEqualEnergyFormsOfGraphs}
	Let $V$ be a finite set, $|V|\geq 2$. A symmetric quadratic form $\F : l(V) \to \R$ is a Dirichlet form on $V$ if and only if there exists a connected graph $G = (V,c)$ such that $\F$ is its energy form. In particular, for $x,y \in V$, $x \neq y$, we have
	\begin{equation}
	\label{eq:EdgeWeightsFromEnergyForm}
	c(x,y) = - \F(\mathds{1}_x, \mathds{1}_y).
	\end{equation}
\end{lem}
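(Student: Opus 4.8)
The statement I need to prove (Lemma~\ref{lemma:DirichletFormsEqualEnergyFormsOfGraphs}) is a biconditional, so the plan is to handle the two directions separately. The easy direction is that every energy form of a connected graph is a Dirichlet form. For this I would verify the three axioms directly from the formula $\mathcal{E}_G(u) = \frac{1}{2}\sum_{x,y} c(x,y)(u(x)-u(y))^2$. Condition \ref{DF1} is immediate since $c(x,y)\geq 0$ and $(u(x)-u(y))^2\geq 0$. For \ref{DF2}, $\mathcal{E}_G(u)=0$ forces $u(x)=u(y)$ whenever $c(x,y)>0$, i.e. across every edge; since $G$ is connected this propagates to give $u$ constant, and conversely a constant $u$ obviously gives energy zero. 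For the Markov property \ref{DF3} I would use the elementary pointwise estimate $|\tilde{u}(x)-\tilde{u}(y)| \leq |u(x)-u(y)|$, which holds because the truncation $t\mapsto \min\{1,\max\{t,0\}\}$ is $1$-Lipschitz; squaring and summing against the non-negative weights $c(x,y)$ yields $\mathcal{E}_G(\tilde u)\leq \mathcal{E}_G(u)$.

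The more interesting direction is the converse: given an abstract Dirichlet form $\mathcal{F}$, I must produce a connected graph whose energy form is $\mathcal{F}$. The natural move is to \emph{define} the candidate weights by \eqref{eq:EdgeWeightsFromEnergyForm}, namely $c(x,y) := -\mathcal{F}(\mathds{1}_x,\mathds{1}_y)$ for $x\neq y$ and $c(x,x):=0$, where $\mathcal{F}(\cdot,\cdot)$ denotes the symmetric bilinear form obtained from $\mathcal{F}$ by polarization. Symmetry of $c$ is inherited from symmetry of the bilinear form. I would then expand an arbitrary $u\in l(V)$ as $u=\sum_x u(x)\mathds{1}_x$ and use bilinearity to write $\mathcal{F}(u)=\sum_{x,y} u(x)u(y)\,\mathcal{F}(\mathds{1}_x,\mathds{1}_y)$; the goal is to rearrange this into the energy-form shape. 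Using \ref{DF2} applied to the constant function $\mathds{1}=\sum_y \mathds{1}_y$ gives the key relation $\sum_y \mathcal{F}(\mathds{1}_x,\mathds{1}_y)=0$ for each $x$ (since $\mathcal{F}(\mathds{1}_x,\mathbf{1})=0$), which lets me replace the diagonal terms $\mathcal{F}(\mathds{1}_x,\mathds{1}_x)$ by $-\sum_{y\neq x}\mathcal{F}(\mathds{1}_x,\mathds{1}_y)=\sum_{y\neq x} c(x,y)$ and complete the square, producing exactly $\frac{1}{2}\sum_{x,y} c(x,y)(u(x)-u(y))^2$.

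The genuine obstacle, and the step I would spend the most care on, is showing $c(x,y)\geq 0$ for $x\neq y$, i.e. that the weights are \emph{non-negative} — this is precisely where the Markov property \ref{DF3} must be used, as neither \ref{DF1} nor \ref{DF2} alone controls the sign of off-diagonal entries (the examples earlier in the paper show that $c$ can be negative when \ref{DF3} fails). The plan is to fix a pair $x\neq y$ and feed a carefully chosen test function into \ref{DF3}: take $u$ equal to $1$ on $x$, to some value in $(0,1)$ or $\{0\}$ on $y$, and $0$ elsewhere, so that the truncation $\tilde u$ differs from $u$ only in a controlled way, and compare $\mathcal{F}(\tilde u)$ with $\mathcal{F}(u)$ to extract an inequality forcing $-\mathcal{F}(\mathds{1}_x,\mathds{1}_y)\geq 0$. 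Concretely, evaluating $\mathcal{F}$ on $\mathds{1}_x + t\mathds{1}_y$ and comparing with its truncation as $t$ varies should isolate the cross term $\mathcal{F}(\mathds{1}_x,\mathds{1}_y)$ with the correct sign. Finally, once non-negativity is established, connectedness of $G=(V,c)$ follows from \ref{DF2}: if $G$ were disconnected, the indicator of a proper component would be a non-constant function with zero energy, contradicting \ref{DF2}; this simultaneously confirms that the constructed graph is connected and completes the identification $\mathcal{F}=\mathcal{E}_G$.
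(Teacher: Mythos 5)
Your outline is correct and follows the standard argument (the paper itself does not prove this lemma but defers to Kigami); in particular, defining $c(x,y)=-\F(\mathds{1}_x,\mathds{1}_y)$, completing the square via $\sum_y\F(\mathds{1}_x,\mathds{1}_y)=0$, and extracting the sign of the cross term by applying \ref{DF3} to $u=\mathds{1}_x+t\,\mathds{1}_y$ with $t<0$ (so that $\tilde u=\mathds{1}_x$, giving $0\le 2t\,\F(\mathds{1}_x,\mathds{1}_y)+t^2\F(\mathds{1}_y)$ and hence $\F(\mathds{1}_x,\mathds{1}_y)\le 0$ as $t\to 0^-$) is exactly the classical route. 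One small repair: the relation $\F(\mathds{1}_x,\mathbf{1})=0$ does not follow from \ref{DF2} alone, since a symmetric bilinear form can vanish at a vector without that vector lying in its radical; you also need \ref{DF1}, which makes $\F$ positive semidefinite so that Cauchy--Schwarz yields $\F(u,\mathbf{1})^2\le\F(u)\,\F(\mathbf{1})=0$ for every $u$.
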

For a Dirichlet form $\F$ on a finite set $V$, define
\begin{equation}
R_{\F}(x,y) =\left( \min \mengedef{\F(u)}{u \in l(V), u(x) = 1, u(y) = 0} \right)^{-1}.
\end{equation}
Let $G$ be a finite graph, $\mathcal{E}$ its energy form, $R$ its effective resistance and $\phi^{xy}$ the solution of the Dirichlet problem (\ref{eq:DirichletProblem}) associated to $G$. By \cite[Theorem 2.3]{JorgensenPearse2009}, we have the equality
\begin{equation}
\label{eq:EffectiveResistanceViaDirichletForm}
R_{\F}(x,y) = \F(\phi^{xy}) = R(x,y).
\end{equation}

By \cite[Theorem 2.1.12]{Kigami2001}, two Dirichlet forms $\F_1$ and $\F_2$ on $V$ satisfy $\F_1 = \F_2$ if and only if $R_{\F_1} = R_{\F_2}$. Together with (\ref{eq:EffectiveResistanceViaDirichletForm}), this implies that there is a bijection between all Dirichlet forms on $V$ and all effective resistance spaces on $V$.

\begin{dfn}[Kigami]
	Let $X$ be any set. A function $R: X \times X \to \R_{\geq 0}$ is called \emph{resistance metric} on $X$ if, for any finite $W \subseteq X$, there exists a Dirichlet form $\F_W$ such that
	$$R\restr_{W \times W} = R_{\F_W}.$$
\end{dfn}
In order to apply our characterization from Theorem \ref{theorem:CharacterizationFiniteERS} to resistance metrics, we need the notion of the \emph{trace} of a Dirichlet form. For a Dirichlet form $\F$ on a set $X$ and a finite subset $W\subseteq X$, \cite{Kasue2010} defines the trace $\F^W$ of $\F$ by
$$\F^W(w) := \min_{\substack{u: X \to \R\\u\restr_W = w}} \F(u) ~,~ w \in l(W).$$ 
By Lemma 1.7 and Lemma 1.9 in \cite{Kasue2010}, $\F^W$ is then a Dirichlet form on $W$ satisfying
$$R_{\F}(x,y) = R_{\F^W}(x,y) \all x,y \in W.$$
Using the trace of Dirichlet forms, we see that for a finite set $X$, $R$ is a resistance metric on $X$ if and only if there exists a graph $G = (X,c)$ with effective resistance $R$.
\begin{theorem}
	\label{theorem:CharResistanceMetrics}
	Let V be a countably infinite set $V$ and $(V_n)_{n\in\N}$ a finite exhaustion of $V$. Then, a function $R: V \times V \to \R_{\geq 0}$ is a resistance metric on $V$ if and only if $R_n := R \restr_{V_n \times V_n}$ satisfies the criterion of Theorem \ref{theorem:CharacterizationFiniteERS} for all $n \in \N$.
\end{theorem}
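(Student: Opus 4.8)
The plan is to reduce the statement entirely to the finite-case equivalence recorded just before it, namely that for a finite set the existence of a Dirichlet form realizing $R$ is equivalent (via Lemma~\ref{lemma:DirichletFormsEqualEnergyFormsOfGraphs} together with (\ref{eq:EffectiveResistanceViaDirichletForm})) to the existence of a graph with effective resistance $R$, which by Theorem~\ref{theorem:CharacterizationFiniteERS} is precisely the stated criterion. I would prove the two directions separately. For the forward direction, assume $R$ is a resistance metric on $V$ and fix $n$. Since $V_n$ is a finite subset of $V$, the definition of a resistance metric furnishes a Dirichlet form $\F_{V_n}$ on $V_n$ with $R_n = R_{\F_{V_n}}$. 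By Lemma~\ref{lemma:DirichletFormsEqualEnergyFormsOfGraphs} this form is the energy form of a connected graph $G_n = (V_n, c_n)$, and by (\ref{eq:EffectiveResistanceViaDirichletForm}) the effective resistance of $G_n$ equals $R_{\F_{V_n}} = R_n$. Hence $(V_n, R_n)$ is an ERS, so by Theorem~\ref{theorem:CharacterizationFiniteERS} it satisfies the criterion; as $n$ was arbitrary, this holds for every $n$.

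The converse is where the exhaustion structure and the trace are used, and is the only real content. Assume $R_n$ satisfies the criterion for every $n$, and let $W \subseteq V$ be an arbitrary finite set; I must produce a Dirichlet form $\F_W$ on $W$ with $R\restr_{W \times W} = R_{\F_W}$. Because $(V_n)_{n \in \N}$ is increasing with union $V$ and $W$ is finite, each point of $W$ lies in some $V_{n}$, so there is an $N$ with $W \subseteq V_N$. By the criterion and Theorem~\ref{theorem:CharacterizationFiniteERS} there is a graph $G_N = (V_N, c_N)$ with effective resistance $R_N$, and by Lemma~\ref{lemma:DirichletFormsEqualEnergyFormsOfGraphs} its energy form $\mathcal{E}_{G_N}$ is a Dirichlet form on $V_N$ with $R_{\mathcal{E}_{G_N}} = R_N$ by (\ref{eq:EffectiveResistanceViaDirichletForm}). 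I would then set $\F_W := \mathcal{E}_{G_N}^W$, the trace of $\mathcal{E}_{G_N}$ onto $W$. By Lemma~1.7 and Lemma~1.9 of \cite{Kasue2010}, $\F_W$ is a Dirichlet form on $W$ satisfying $R_{\F_W}(x,y) = R_{\mathcal{E}_{G_N}}(x,y)$ for all $x,y \in W$. Chaining the identities gives $R_{\F_W}(x,y) = R_N(x,y) = R(x,y)$ on $W$, so $\F_W$ is the required form and $R$ is a resistance metric on $V$.

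The main obstacle is precisely the mismatch between the hypothesis and the definition: the criterion is assumed only on the exhaustion sets $V_n$, whereas being a resistance metric demands a Dirichlet form for \emph{every} finite $W \subseteq V$, most of which are not among the $V_n$. The device that closes this gap is the observation that finiteness of $W$ forces $W \subseteq V_N$ for some $N$, after which the trace operation transports the Dirichlet form on the large set $V_N$ down to $W$ while leaving all pairwise resistance values unchanged. Notably, no compatibility between the forms obtained for different sets $W$ (nor independence from the choice of $N$) needs to be verified, since the definition requires only existence; were consistency desired, it would follow from the uniqueness recorded in Remark~\ref{rem:BijectionERSConnectedGraphs}.
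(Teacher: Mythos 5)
Your proposal is correct and follows essentially the same route as the paper: the forward direction via Lemma \ref{lemma:DirichletFormsEqualEnergyFormsOfGraphs} and (\ref{eq:EffectiveResistanceViaDirichletForm}), and the converse by enclosing an arbitrary finite $W$ in some $V_N$ and taking the trace $(\mathcal{E}_{G_N})^W$ using the cited lemmas of \cite{Kasue2010}. No gaps.
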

\begin{proof}
	Let $R$ be a resistance metric and $n \in \N$. By definition, $R_n = R_{\F_n}$ holds for some Dirichlet form $\F_n$ on $V_n$. By Lemma \ref{lemma:DirichletFormsEqualEnergyFormsOfGraphs} and (\ref{eq:EffectiveResistanceViaDirichletForm}), there exists a finite graph $G_n = (V_n, c_n)$ such that $R_{\F_n} = R_{G_n}$. Hence, $R_n$ is a finite ERS and thus satisfies the criterion of Theorem \ref{theorem:CharacterizationFiniteERS}.
	
	Now assume that $R_n = R\restr_{V_n\times V_n}$ satisfies the criterion of Theorem \ref{theorem:CharacterizationFiniteERS} for all $n \in \N$. Hence, $R_n$ is the effective resistance of a finite graph $G_n = (V_n, c_n)$ with energy form $\mathcal{E}_n$. Let $W \subseteq V$ be any finite subset of $V$. Since $\bigcup V_n = V$, there exists $n \in \N$ such that $W \subseteq V_n$. Then, $(\mathcal{E}_n)^W$ is a Dirichlet form on $W$ and satisfies
	$$R_{(\mathcal{E}_n)^W}(x,y) = R_{\mathcal{E}_n}(x,y) = R_n(x,y) = R(x,y)$$
	for all $x,y \in W$. It follows that $R$ is a resistance metric.\qed
\end{proof}

\section{The limit graph of countably infinite resistance metrics}
\label{section:LimitingProperties}
Let $V$ be a countably infinite set and $R$ be a resistance metric on $V$. If $(V_n)_{n \in \N}$ is a finite exhaustion of $V$, each $(V_n, R_n)$ with $R_n = R \restr_{V_n}$ is an effective resistance space. Hence, there exists $G_n = (V_n, c_n)$ with effective resistance $R_n$. For $x,y \in V$, we are now interested in the limiting behavior of the sequence $(c_n(x,y))_{n \in \N}$.

First, recall a well-known method of network reduction called the \emph{star-mesh transform} \cite[Exercise 2.69(e)]{LyonsPeres2015}. It is used to remove a node from an electrical network without affecting the effective resistance between all other nodes by possibly adding new resistors. In our context of weighted graphs, it can be formulated as follows.
\begin{lem}[Star-Mesh-Transform]
\label{lemma:StarMesh}
Let $(V,c)$ be a finite graph with effective resistance $R$ and fix $x_0 \in V$. Furthermore, let $R'$ denote the effective resistance of the network $(V \setminus \menge{x_0}, c')$ where
\begin{equation}
\label{eq:StarMeshEdgeWeights}
c'(x,y) := \begin{cases}
0 & , x = y\\
c(x,y) + \frac{c(x,x_0)c(x_0,y)}{c_{x_0}} & , x \neq y
\end{cases}~.
\end{equation}
Then, $R'(x,y) = R(x,y)$ for all $x,y \in V \setminus \menge{x_0}$.
\end{lem}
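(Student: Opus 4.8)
The plan is to verify the Star-Mesh-Transform by showing that the effective resistance between any two surviving nodes $x,y \in V \setminus \menge{x_0}$ is unaffected when $x_0$ is deleted and the new conductances $c'$ are introduced. Since effective resistance is fully determined by the energy form (recall \eqref{eq:EffectiveResistanceViaDirichletForm} together with Lemma \ref{lemma:DirichletFormsEqualEnergyFormsOfGraphs}), the cleanest route is to compare the minimization problems defining $R_{\mathcal{E}}$ on the two networks. Concretely, for fixed boundary data $u(x) = 1$, $u(y) = 0$ I would compare the energies $\mathcal{E}_G(u)$ and $\mathcal{E}_{G'}(u')$ and show the two minima coincide.

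\emph{First} I would observe that on the original network, for a function $u$ with prescribed values on $V \setminus \menge{x_0}$, the energy $\mathcal{E}_G(u)$ is minimized over the single free variable $u(x_0)$ by making $u$ harmonic at $x_0$; this is a one-variable quadratic minimization. The harmonic value is $u(x_0) = \sum_{z \neq x_0} \frac{c(x_0,z)}{c_{x_0}} u(z)$, i.e. the weighted average dictated by the Laplacian \eqref{eq:GraphLaplacian}. \emph{Second}, I would substitute this optimal $u(x_0)$ back into $\mathcal{E}_G(u)$ and expand. The edges incident to $x_0$ contribute $\frac{1}{2}\sum_{z} c(x_0,z)(u(x_0) - u(z))^2$, and after inserting the harmonic value and simplifying, the cross terms should reorganize exactly into the extra conductances $\frac{c(x,x_0)c(x_0,y)}{c_{x_0}}$ appearing in \eqref{eq:StarMeshEdgeWeights}. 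The upshot is the algebraic identity
$$
\min_{u(x_0)} \mathcal{E}_G(u) = \mathcal{E}_{G'}\big(u\restr_{V \setminus \menge{x_0}}\big),
$$
valid for every choice of boundary values, where $G' = (V \setminus \menge{x_0}, c')$.

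\emph{Third}, with this pointwise-in-boundary-data identity in hand, the conclusion about effective resistances is immediate. Using \eqref{eq:EffectiveResistanceViaDirichletForm} in the form $R(x,y) = \left(\min\{\mathcal{E}_G(u) : u(x)=1, u(y)=0\}\right)^{-1}$, I would note that minimizing $\mathcal{E}_G$ over all functions with $u(x) = 1$, $u(y) = 0$ can be performed in two stages: first minimize over $u(x_0)$ with the remaining values fixed, then minimize over those remaining values. The first stage yields $\mathcal{E}_{G'}$ by the identity above, so the overall minima — and hence $R(x,y)$ and $R'(x,y)$ — agree. One should also check that $c'$ genuinely defines a graph: symmetry and non-negativity of the new weights are clear from \eqref{eq:StarMeshEdgeWeights}, and since star-mesh preserves connectivity the reduced network admits a well-defined effective resistance.

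\emph{The main obstacle} I anticipate is purely the bookkeeping in the second step: carefully expanding $\frac{1}{2}\sum_{z}c(x_0,z)(u(x_0)-u(z))^2$ at the harmonic value and confirming that the resulting quadratic form in the surviving variables matches $\mathcal{E}_{G'}$ term by term, including the diagonal contributions $c'_x$ that the new cross-edges induce. This is a finite, elementary computation, but it is the place where a sign error or a mishandled factor of $c_{x_0}$ could creep in, so I would organize it by isolating the coefficient of each product $u(x)u(y)$ on both sides rather than manipulating the whole sum at once.
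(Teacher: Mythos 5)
Your proposal is correct. Note that the paper itself does not prove this lemma: it is recalled as a known network-reduction fact with a citation to Lyons--Peres, and the only hint at a proof is the remark immediately following it, which observes that the energy form of $(V\setminus\{x_0\},c')$ is exactly the trace $\mathcal{E}^{V\setminus\{x_0\}}$ of $\mathcal{E}_G$. Your argument is precisely the computation behind that remark. The key identity checks out: writing $a_z=c(x_0,z)$ and $C=c_{x_0}$, the one-variable minimization of $\sum_z a_z(t-u(z))^2$ at the harmonic value $t^*=C^{-1}\sum_z a_z u(z)$ yields
\begin{equation*}
\sum_z a_z u(z)^2-\frac{1}{C}\Bigl(\sum_z a_z u(z)\Bigr)^2
=\frac{1}{2}\sum_{w,z\neq x_0}\frac{a_w a_z}{C}\,(u(w)-u(z))^2,
\end{equation*}
which is exactly the energy of the added mesh edges, so $\min_{u(x_0)}\mathcal{E}_G(u)=\mathcal{E}_{G'}(u\restr_{V\setminus\{x_0\}})$ and the two-stage minimization together with (\ref{eq:EffectiveResistanceViaDirichletForm}) gives $R'=R$ on $V\setminus\{x_0\}$. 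Two minor points you should make explicit rather than wave at: connectivity of $G'$ follows because any two neighbours $w,z$ of $x_0$ acquire the edge weight $a_wa_z/C>0$, so every path through $x_0$ survives; and $c_{x_0}>0$ (hence the formula is well defined) because $G$ is connected with $|V|\geq 2$. Compared with leaving the lemma as a citation, your route has the advantage of being self-contained and of making the ``trace'' remark after the lemma a corollary of the proof rather than a separate assertion.
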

By Remark \ref{rem:BijectionERSConnectedGraphs}, the graph defined by (\ref{eq:StarMeshEdgeWeights}) is the \textbf{only} graph with vertex set $V \setminus \menge{x_0}$ inducing this effective resistance. Hence, the star-mesh transform is the only way to remove a vertex without changing the effective resistances between all other vertices. Observe that (\ref{eq:StarMeshEdgeWeights}) also implies an explicit formula for the sum of edge weights at a given vertex $x$, namely
\begin{align}
\label{eq:StarMeshVertexWeights}
c'_x & = c_x - c(x,x_0) + \sum_{w \neq x, x_0} \frac{c(x,x_0)c(x_0,w)}{c_{x_0}} \\
& = c_x + c(x,x_0) \lb -1 + \frac{c_{x_0} - c(x_0,x)}{c_{x_0}} \rb = c_x - \frac{c(x_0,x)^2}{c_{x_0}}~.\notag
\end{align}
\begin{rem}[Probabilistic interpretation]
	Let $\P_x$ and $\P'_x$ denote the random walk on $(V,c)$ and $(V\setminus\menge{x_0},c')$, respectively, and let $y \in V\setminus\menge{x_0,x}$. Then,
	\begin{align*}
	\P'_x[\omega_1 = y] & = \frac{c'(x,y)}{c'_x} = \frac{c_x}{c'_x} \left(\frac{c(x,y)}{c_x} + \frac{c(x,x_0)c(x_0,y)}{c_xc_{x_0}}\right)\\
	& = \left(1 - \frac{c(x,x_0)c(x_0,x)}{c_xc_{x_0}}\right)^{-1} \left( \P_x[\omega_1 = y] + \P_x[\omega_1 = x_0, \omega_2 = y]\right)\\
	& = \left( \P_x[(\omega_1, \omega_2) \neq (x_0,x)] \right)^{-1} \cdot \left( \P_x[\omega_1 = y] + \P_x[\omega_1 = x_0, \omega_2 = y]\right)\\
	& = \P_x[\omega_1 = y \vee \omega_1 = x_0, \omega_2 = y ~ | ~ (\omega_1, \omega_2) \neq (x_0,x)].
	\end{align*}
	The behavior of $\P_x'$ is very similar to that of $\P_x$ with only two differences. First, the removal of $x_0$ from $(V,c)$ is compensated by adding a \emph{shortcut} from $x$ to $y$, enabling $\P'_x$ to jump directly from $x$ to $y$ whenever $\P_x$ would have taken a detour over $x_0$. Second, the transition probabilities of $\P_x'$ are those of $\P_x$ conditioned on $(\omega_1,\omega_2) \neq (x_0,x)$. This is due to the fact that our version of the star-mesh transform stays within our class of weighted graphs; more precisely, graphs which do not admit self-loops. Hence, the shorted random walk forgets about the possibility of going to $x_0$ and back to $x$ again.
\end{rem}
\begin{rem}[Trace of Energy form]
	Let $\mathcal{E}$ be the energy form of $(V,c)$ and $W := V \setminus\menge{x_0}$. Then, the trace $\mathcal{E}^{V \setminus\menge{x_0}}$ of $\mathcal{E}$ is exactly the energy form of $(V \setminus\menge{x_0}, c')$.
\end{rem}
\begin{cor}
\label{cor:MonotonyOfEdgeWeights}
Let $V_1, V_2$ be two finite sets such that $V_1 \subseteq V_2$ and let $(V_1,c_1)$ and $(V_2,c_2)$ be two graphs with effective resistances $R_1$ and $R_2$. If $R_1(x,y) = R_2(x,y)$ for all $x,y \in V_1$, then
$$c_1(x,y) \geq c_2(x,y) ~ \all x,y \in V_1$$
and
$$(c_1)_x \leq (c_2)_x \all x \in V_1.$$
In particular,
$$\frac{c_1(x,y)}{(c_1)_x} \geq \frac{c_2(x,y)}{(c_2)_x} \all x,y \in V_1.$$
\end{cor}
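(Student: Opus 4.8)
The plan is to reconstruct $(V_1, c_1)$ from $(V_2, c_2)$ by successively eliminating the finitely many vertices of $V_2 \setminus V_1$ via the star-mesh transform (Lemma \ref{lemma:StarMesh}) and to track how the edge and vertex weights change along the way. Since $V_2 \setminus V_1$ is finite, write it as $\{x_1, \ldots, x_m\}$ and define a chain of graphs $(W_k, c^{(k)})$ with $W_0 = V_2$, $c^{(0)} = c_2$, $W_k = W_{k-1} \setminus \{x_k\}$, where $c^{(k)}$ arises from $c^{(k-1)}$ by the star-mesh transform at $x_k$. By Lemma \ref{lemma:StarMesh}, each step leaves the effective resistance between the surviving vertices unchanged, so the final graph $(W_m, c^{(m)}) = (V_1, c^{(m)})$ has effective resistance $R_2\restr_{V_1} = R_1$. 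By the uniqueness statement in Remark \ref{rem:BijectionERSConnectedGraphs}, this forces $c^{(m)} = c_1$.

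First I would establish the edge-weight inequality. Formula (\ref{eq:StarMeshEdgeWeights}) shows that at each elimination step, for surviving vertices $x \neq y$,
$$c^{(k)}(x,y) = c^{(k-1)}(x,y) + \frac{c^{(k-1)}(x,x_k)\,c^{(k-1)}(x_k,y)}{(c^{(k-1)})_{x_k}} \geq c^{(k-1)}(x,y),$$
because all weights are non-negative. (Here $(c^{(k-1)})_{x_k} > 0$: each intermediate graph is connected, being the effective-resistance-preserving reduction of a connected graph, and every vertex of a connected graph on at least two vertices has positive weight sum.) Chaining these inequalities for $k = 1, \ldots, m$ yields $c_1(x,y) = c^{(m)}(x,y) \geq c_2(x,y)$ for all $x,y \in V_1$.

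Next I would treat the vertex-weight inequality in the same manner, now using formula (\ref{eq:StarMeshVertexWeights}):
$$(c^{(k)})_x = (c^{(k-1)})_x - \frac{c^{(k-1)}(x_k,x)^2}{(c^{(k-1)})_{x_k}} \leq (c^{(k-1)})_x.$$
Composing over all $m$ steps gives $(c_1)_x \leq (c_2)_x$ for every $x \in V_1$. The ratio inequality is then a direct consequence of the two: since $c_1(x,y) \geq c_2(x,y) \geq 0$ and $0 < (c_1)_x \leq (c_2)_x$, we have
$$\frac{c_1(x,y)}{(c_1)_x} \geq \frac{c_2(x,y)}{(c_1)_x} \geq \frac{c_2(x,y)}{(c_2)_x}.$$

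The only real subtlety — and the step I expect to require the most care — is the identification $c^{(m)} = c_1$. This hinges on the hypothesis $R_1 = R_2\restr_{V_1}$ together with the uniqueness of the graph realizing a given effective resistance (Remark \ref{rem:BijectionERSConnectedGraphs}); without uniqueness the reduced weights would merely realize $R_2\restr_{V_1}$ rather than being forced to equal $c_1$. Everything else is a transparent composition of the non-negativity built into the star-mesh formulas, and in particular the order in which the vertices of $V_2 \setminus V_1$ are eliminated is immaterial, since the effective resistance among survivors is preserved at every stage regardless of the choice.
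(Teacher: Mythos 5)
Your proof is correct and follows exactly the paper's argument: successive application of the star-mesh transform to the vertices of $V_2 \setminus V_1$, reading off the monotonicity from formulas (\ref{eq:StarMeshEdgeWeights}) and (\ref{eq:StarMeshVertexWeights}), with the identification of the reduced graph as $(V_1,c_1)$ resting on the uniqueness in Remark \ref{rem:BijectionERSConnectedGraphs}. You merely spell out details the paper leaves implicit.
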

\begin{proof}
	The claim follows from (\ref{eq:StarMeshEdgeWeights}) and (\ref{eq:StarMeshVertexWeights}) by successively applying the star-mesh transform to each vertex in $V_2 \setminus V_1$ .\qed
\end{proof}
\begin{theorem}
	\label{theorem:KantenLimesEx}
	Let $R$ be a resistance metric on a countably infinite set $V$ and let $(V_n)_{n \in \N}$ be a finite exhaustion of $V$. Denote by $G_n = (V_n, c_n)$ the unique graph with effective resistance $R(x,y)$ for all $x,y \in V_n$. Then,
	$$\lim_{n \to \infty} c_n(x,y) \in [0,+\infty)$$
	exists for all $x,y \in V$ and is independent of the choice of $(V_n)_{n \in \mathbb N}$. Furthermore,
	$$\lim_{n \to \infty} (c_n)_x \in (0,+\infty]$$
	exists for all $x \in V$ and is independent of the choice of $(V_n)_{n \in \mathbb N}$.
\end{theorem}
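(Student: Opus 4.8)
**

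The plan is to reduce everything to Corollary~\ref{cor:MonotonyOfEdgeWeights}, which already gives the monotonicity needed for convergence. The key observation is that for fixed $x,y \in V$, the quantity $c_n(x,y)$ is only defined once both $x$ and $y$ lie in $V_n$, so I would first fix $N$ large enough that $x,y \in V_N$ and consider the sequence only for $n \geq N$. For such $n$, we have $V_N \subseteq V_n \subseteq V_{n+1}$ and, since all graphs $G_n$ induce the same effective resistance $R$ on their common vertices, Corollary~\ref{cor:MonotonyOfEdgeWeights} applies with $V_1 = V_n$ and $V_2 = V_{n+1}$.

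First I would establish the monotonicity statements. Applying Corollary~\ref{cor:MonotonyOfEdgeWeights} to the pair $V_n \subseteq V_{n+1}$ yields $c_n(x,y) \geq c_{n+1}(x,y)$ and $(c_n)_x \leq (c_{n+1})_x$ for all $x,y \in V_n$. Thus, for fixed $x,y$, the sequence $(c_n(x,y))_{n \geq N}$ is monotonically decreasing and bounded below by $0$ (edge weights are non-negative), so it converges to a limit in $[0,+\infty)$. Dually, $((c_n)_x)_{n \geq N}$ is monotonically increasing; it converges in $(0,+\infty]$, where the lower bound is strict because $(c_N)_x > 0$ (no isolated vertices, as noted after Proposition~\ref{prop:Symmetry}) and the sequence is increasing. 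A monotone sequence always has a limit in the extended reals, so existence is immediate in both cases.

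The remaining—and only genuinely nontrivial—point is independence of the exhaustion. Let $(V_n)$ and $(W_n)$ be two finite exhaustions, with associated edge weights $c_n$ and $\tilde c_n$, producing limits $\ell(x,y)$ and $\tilde\ell(x,y)$. I would fix $x,y$ and exploit that both exhaustions cover $V$: given any $m$ with $x,y \in V_m$, interleave the two exhaustions by choosing $n$ large with $V_m \subseteq W_n$, then choose $m'$ with $W_n \subseteq V_{m'}$, and so on. Since $G_m$, $\tilde G_n$, $G_{m'}$ all realize the same effective resistance $R$ on their shared vertices, Corollary~\ref{cor:MonotonyOfEdgeWeights} applied along the chain $V_m \subseteq W_n \subseteq V_{m'}$ gives $c_m(x,y) \geq \tilde c_n(x,y) \geq c_{m'}(x,y)$. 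Taking limits traps $\ell(x,y)$ and $\tilde\ell(x,y)$ between each other, forcing $\ell(x,y) = \tilde\ell(x,y)$; the identical interleaving argument with the reversed inequalities handles $(c_n)_x$.

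I expect the interleaving step to be the main obstacle, though it is more bookkeeping than depth: one must verify that a subset relation $V_m \subseteq W_n$ between members of distinct exhaustions can always be arranged (which follows from $\bigcup_k W_k = V$ applied to the finite set $V_m$) and that Corollary~\ref{cor:MonotonyOfEdgeWeights} is genuinely invariant under which exhaustion produced the graph—this is where Remark~\ref{rem:BijectionERSConnectedGraphs} is essential, since it guarantees that the graph on a given vertex set with prescribed effective resistance is \emph{unique}, so $c_n$ depends only on the set $V_n$ and not on the index $n$ or the exhaustion. Once uniqueness is invoked, the squeeze argument closes without further difficulty.
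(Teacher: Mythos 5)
Your proposal is correct and follows essentially the same route as the paper: monotonicity from Corollary \ref{cor:MonotonyOfEdgeWeights} gives existence of the limits, and independence of the exhaustion is obtained by interleaving two exhaustions (the paper phrases this as passing to a merged exhaustion containing infinitely many members of each, which is the same squeeze you describe). Your explicit remarks on the strict positivity of $\lim_n (c_n)_x$ and on the role of uniqueness from Remark \ref{rem:BijectionERSConnectedGraphs} are correct and only make explicit what the paper leaves implicit.
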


\begin{proof}[Proof of Theorem \ref{theorem:KantenLimesEx}]
Let $x,y \in V_m$. By Corollary \ref{cor:MonotonyOfEdgeWeights}, we have
$$c_n(x,y) \geq c_{n+1}(x,y) \all n \geq m$$
Since $c_n(x,y)\geq 0$ for all $n \geq m$, we see that $\lim_{n \to\infty} c_n(x,y)$ does indeed exist. Its independence regarding the choice of $(V_n)_{n\in\N}$ is a generic argument. For any two exhaustions $\mathcal{V} = (V_n)_{n\in\N}$ and $\mathcal{V}' = (V_n')_{n\in \N}$ with limits $c(x,y)$ and $c'(x,y)$, there exists an exhaustion which contains infinitely many members of $\mathcal{V}$ as well as of $\mathcal{V}'$. Since the limit exists for this new exhaustion, $c(x,y) = c'(x,y)$ follows.

The second statement follows analogously since Corollary \ref{cor:MonotonyOfEdgeWeights} also states that $(c_n)_x \leq (c_{n+1})_x$ for all $n \geq m$. The only difference is that we do not have an upper bound for the sequence and hence it may be possible that $(c_n)_x \to \infty$. \qed
\end{proof}
\begin{dfn}
	\label{definition:LimitGraph}
	Using the notation of Theorem \ref{theorem:KantenLimesEx}, we define the \emph{limit graph} of $R$ to be $G_R := (V,c)$ with
	$$c(x,y) := \lim_{n \to \infty} c_n(x,y) ~ , ~ x,y \in V.$$
\end{dfn}
\begin{rem}
	The definition of $G_R$ does not depend on the choice of the exhaustion $(V_n)_{n\in\N}$.
	 
	Furthermore, $G_R$ is a weighted graph as defined in Section \ref{section:Introduction}. Indeed, for $x,y \in V$, we have $c(x,x) = \lim_{n\to \infty} c_n(x,x) = 0$ and $c(x,y) = \lim_{n \to\infty} c_n(x,y) = \lim_{n \to \infty} c_n(y,x) = c(y,x)$.
\end{rem}
It seems natural to wonder about the connection between $G_R$ and the metric $R$. One would hope that $R$ is the effective resistance of $G_R$. On infinite graphs there are two prominent versions of effective resistance called \emph{free} and \emph{wired} effective resistance. See \cite[Section 2]{JorgensenPearse2009} or \cite[Section 9]{LyonsPeres2015} for definitions. Both notions require the infinite graph to satisfy $c_x < \infty$ for every $x \in V$ which unfortunately is not true in general for $G_R$. Indeed, the following examples show that $G_R$ may not allow for a well-defined notion of its Laplacian.

\begin{exa}[Discrete metric on $\N$]
\label{ex:LimitGraphDegenerate}	
Consider $V = \N$ and the discrete metric $d_{\text{disc}}$ on $V$, i.e. $d_{\text{disc}}(x,y) = 1$ for all $x \neq y$. Now take any finite exhaustion $(V_n)_{n\in\N}$ of $V$. Then, $(V_n, d_{\text{disc}} \restr_{V_n})$ is an ERS with associated graph $(V_n, c_n)$ where
$$c_n(x,y) = (1-\delta_{xy}) \frac{2}{|V_n|}.$$
It follows that $c(x,y) = 0$ for all $x,y \in V$ since $|V_n| \to \infty$. Thus, $G_{d_{\text{disc}}}$ is completely disconnected and its Laplacian is not well-defined.
\end{exa}

\begin{exa}[Star metric on $\N$]
	\label{ex:LimitGraphNoLaplacian}	
	Consider $V = \N$ equipped with the star metric $d_{\bigstar}$, i.e.,
	$$d_{\bigstar}(x,y) = \begin{cases}
	0 & , ~ x = y\\
	1 & , ~ x = 1 \text{ or } y = 1\\
	2 & , ~ x \neq 1 \neq y
	\end{cases}.$$
	Furthermore, consider the finite exhaustion $(V_n)_{n \in \N}$, $V_n = \menge{1,\ldots, n}$, of $V$. Then, $d_{\bigstar} \restr_{V_n}$ is the effective resistance of $(V_n, c_n)$ where 
	$$c_n(x,y) = \begin{cases}
	1 & , ~ x = 1, y \neq 1 \text{ or } x \neq 1, y = 1\\
	0 & , \text{ otherwise}
	\end{cases}.$$
	This is due to the fact that $(V_n, c_n)$ is a tree (it contains no cycles) and on a tree the effective resistance equals the geodesic metric, see \cite[Lemma 4.3]{JorgensenPearse2009}. Hence, $G_{d_{\bigstar}}$ is connected but since $c_1 = \sum_{n \in \N} c(1,n) = \infty$, its Laplacian is not well-defined.	
\end{exa}

If the resistance metric $R$ is induced by a graph $G = (V,c)$, the following proposition shows that the limit graph of $R$ is exactly $G$. To formalize what it means for $R$ to be \emph{induced by $G$}, we will use the notion of a \emph{resistance form}. However, as the precise definition is not needed for the statement itself, we will omit it and refer the interested reader to \cite[Definition 2.3.1]{Kigami2001} or \cite[Definition 1.2]{Kasue2010}. 

For a graph $G = (V,c)$ with energy form $\mathcal{E}$, we define $\domE$ and $\Fin$ similar to \cite{JorgensenPearse2009}, namely
\begin{equation}
\dom \mathcal{E} := \mengedef{u: V \to \R}{\mathcal{E}(u) < \infty}
\end{equation}
and
\begin{equation}
\Fin := \mengedef{u: V \to \R}{ \ex k \in \R, W \subseteq V : |W| < \infty \text{ and } u\restr_{V \setminus W} \equiv k}.
\end{equation}
\begin{prop}
	\label{prop:LimitGraphConsistency}
	Let $G = (V,c)$ be a countably infinite graph with energy form $\mathcal{E}$. Furthermore, let $\Fin \subseteq \mathcal{D} \subseteq \domE$ such that $(\mathcal{E}, \mathcal{D})$ is a resistance form with corresponding resistance metric $R$, i.e.
	\begin{equation}
	R(x,y) = \lb\inf \mengedef{\mathcal{E}(u)}{u(x) = 1, u(y) = 0, u \in \mathcal{D}}\rb^{-1}.
	\end{equation}
	Then, $G$ is the limit graph of $R$.
\end{prop}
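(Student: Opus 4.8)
The plan is to express each reduced edge weight $c_n(x,y)$ through the energy form $\mathcal{E}$ and then to identify its limit as $c(x,y) = -\mathcal{E}(\mathds{1}_x,\mathds{1}_y)$ by exploiting the Hilbert space structure that the resistance form $(\mathcal{E},\mathcal{D})$ carries modulo constants. Since Theorem \ref{theorem:KantenLimesEx} already guarantees that $\lim_n c_n(x,y)$ exists, it suffices to compute its value; I would then compare it with the edge weights of $G$ and invoke Definition \ref{definition:LimitGraph}.

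First I would set up the algebraic identity for $c_n$. For each $n$ the trace $\mathcal{E}^{V_n}$ of $(\mathcal{E},\mathcal{D})$ is a Dirichlet form on the finite set $V_n$ with $R_{\mathcal{E}^{V_n}} = R\restr_{V_n}$; by Lemma \ref{lemma:DirichletFormsEqualEnergyFormsOfGraphs}, (\ref{eq:EffectiveResistanceViaDirichletForm}) and the uniqueness in Remark \ref{rem:BijectionERSConnectedGraphs} it is precisely the energy form of $G_n$. Hence (\ref{eq:EdgeWeightsFromEnergyForm}) gives $c_n(x,y) = -\mathcal{E}^{V_n}(\mathds{1}_x,\mathds{1}_y)$ for $x,y \in V_n$. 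Writing $P_n$ for the energy-minimizing harmonic extension operator, one has $\mathcal{E}^{V_n}(v,w) = \mathcal{E}(P_n v, P_n w)$; and since $\mathds{1}_x \in \Fin \subseteq \mathcal{D}$ is an admissible global extension of its own restriction to $V_n$, the minimizer $P_n\mathds{1}_x$ is exactly the energy-orthogonal projection of $\mathds{1}_x$ onto $\mathcal{N}_n^{\perp}$, where $\mathcal{N}_n := \{v \in \mathcal{D} : v\restr_{V_n} \equiv 0\}$. Thus $c_n(x,y) = -\mathcal{E}(P_n\mathds{1}_x, P_n\mathds{1}_y)$, while a direct computation from the definition of $\mathcal{E}$ gives $c(x,y) = -\mathcal{E}(\mathds{1}_x,\mathds{1}_y)$ (only the summands indexed by the pair $\{x,y\}$ survive, and $\mathcal{E}(\mathds{1}_x) = c_x < \infty$, so all quantities are finite).

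The analytic core is then a statement about orthogonal projections in the Hilbert space $\mathcal{D}/\text{const}$. The $\mathcal{N}_n$ are closed — via the resistance estimate $|u(x)-u(y)|^2 \le R(x,y)\,\mathcal{E}(u)$, energy convergence controls differences of vertex values, so vanishing on $V_n$ is preserved under energy limits — they decrease as $V_n$ grows, and $\bigcap_n \mathcal{N}_n = \{v : v\restr_V \equiv 0\} = \{0\}$ because $\bigcup_n V_n = V$. Consequently $\mathcal{N}_n^{\perp}$ increases with dense union, so $P_n \to I$ strongly and $P_n\mathds{1}_x \to \mathds{1}_x$ in the energy norm. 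Continuity of the bilinear form $\mathcal{E}$ (Cauchy–Schwarz) then yields $c_n(x,y) \to -\mathcal{E}(\mathds{1}_x,\mathds{1}_y) = c(x,y)$ for all $x,y \in V$, which is exactly the assertion $G_R = G$.

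The main obstacle I anticipate is the bookkeeping at the interface between Kigami's resistance-form framework and the concrete graph quantities $c_n$: one must justify that the minimizer defining the trace lies in $\mathcal{D}$ and depends linearly on the boundary data, that it coincides with the orthogonal projection onto $\mathcal{N}_n^{\perp}$, and that these subspaces are closed — each of which leans on the resistance-form axioms (completeness of $\mathcal{D}/\text{const}$ and continuity of point differences) that are used here but whose full statement is deferred to \cite{Kigami2001,Kasue2010}. Once the subspaces are correctly identified, the Hilbert space convergence itself is routine.
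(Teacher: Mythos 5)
Your proposal is correct and its skeleton coincides with the paper's: both identify the energy form of $G_n$ with the trace $\mathcal{E}^{V_n}$ via Lemma \ref{lemma:DirichletFormsEqualEnergyFormsOfGraphs}, (\ref{eq:EffectiveResistanceViaDirichletForm}) and the uniqueness of Remark \ref{rem:BijectionERSConnectedGraphs}, so that $c_n(x,y) = -\mathcal{E}^{V_n}(\mathds{1}_x,\mathds{1}_y)$ and $c(x,y) = -\mathcal{E}(\mathds{1}_x,\mathds{1}_y)$, and then pass to the limit in $n$. The one place where you genuinely diverge is the limit step: the paper simply cites \cite[Lemma~1.10]{Kasue2010} for $\mathcal{E}(u,v) = \lim_{n\to\infty}\mathcal{E}^{V_n}(u\restr_{V_n},v\restr_{V_n})$, whereas you re-derive this for $u = \mathds{1}_x$, $v = \mathds{1}_y$ by realizing the trace as $\mathcal{E}(P_n u, P_n v)$ with $P_n$ the orthogonal projection onto $\mathcal{N}_n^{\perp}$ and showing $P_n \to I$ strongly because the closed subspaces $\mathcal{N}_n = \{v \in \mathcal{D} : v\restr_{V_n}\equiv 0\}$ decrease to $\{0\}$. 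This buys a self-contained argument at the cost of invoking the resistance-form axioms (completeness of $\mathcal{D}$ modulo constants and the estimate $|u(x)-u(y)|^2 \le R(x,y)\mathcal{E}(u)$, which gives closedness of $\mathcal{N}_n$) that the paper deliberately leaves unstated; you correctly flag that one must fix a basepoint or work modulo constants so that ``vanishing on $V_n$'' is meaningful, and that $\mathds{1}_x \in \Fin \subseteq \mathcal{D}$ with $\mathcal{E}(\mathds{1}_x) = c_x < \infty$ makes all quantities finite. I see no gap; your Hilbert-space argument is essentially a proof of the lemma the paper cites.
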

\begin{proof}
	For $x,y \in V$, $x \neq y$, we have $c(x,y) = -\mathcal{E}(\mathds{1}_x, \mathds{1}_y)$.	Let $(V_n)_{n \in \N}$ be a finite exhaustion of $V$ and $G_n = (V_n, c_n)$ be the unique graph such that $R_{G_n} = R \restr_{V_n}$. 
	By Lemma \ref{lemma:DirichletFormsEqualEnergyFormsOfGraphs} and the uniqueness of $G_n$, it follows that $\mathcal{E}_{G_n}$ is the trace of $(\mathcal{E}, \mathcal{D})$ on $V_n$, i.e., $\mathcal{E}_{G_n} = \mathcal{E}^{V_n}$. By \cite[Lemma 1.10]{Kasue2010}, we have for $u,v \in \mathcal{D}$,
	$$\mathcal{E}(u,v) = \lim_{n \to \infty} \mathcal{E}^{V_n}(u\restr_{V_n}, v\restr_{V_n}).$$
	Hence, for $x,y \in V$, $x \neq y$, we have
	\begin{align*}
	c(x,y) & = - \mathcal{E}(\mathds{1}_x, \mathds{1}_y) = -\lim_{n \to \infty} \left( \mathcal{E}^{V_n}(\mathds{1}_x, \mathds{1}_y)\right) \\
	& = -\lim_{n \to \infty} \left(\mathcal{E}_{G_n}(\mathds{1}_x, \mathds{1}_y)\right) = \lim_{n\to \infty} c_n(x,y) = c_R(x,y)
	\end{align*}
	It follows that $c = c_R$ and therefore $G = G_R$.\qed
\end{proof}
\begin{rem}
	Note that the free and wired effective resistance $R^F$ and $R^W$ are covered by the statement of Proposition \ref{prop:LimitGraphConsistency} since
	\begin{equation}
	R^F(x,y) = \lb\inf \mengedef{\mathcal{E}(u)}{u(x) = 1, u(y) = 0, u \in \domE}\rb^{-1}
	\end{equation}
	and
	\begin{equation}
	R^W(x,y) = \lb\inf \mengedef{\mathcal{E}(u)}{u(x) = 1, u(y) = 0, u \in \Fin}\rb^{-1}
	\end{equation}
	by Theorems 2.14 and 2.20 in \cite{JorgensenPearse2009}.
\end{rem}

\section{Weak convergence of random walks}
\label{section:WeakLimitRandomWalk}

As in Section \ref{section:LimitingProperties}, we are given a resistance metric $R$ on a countably infinite set $V$ and a finite exhaustion $(V_n)_{n\in \N}$ of $V$. We denote by $G_n = (V_n, c_n)$ the unique finite graph with effective resistance $R \restr_{V_n}$ and by $G_R = (V,c)$ the associated limit graph. For $x \in V$, let $\P^n_x$ be the random walk on $G_n$ starting in $x$. We will now investigate under which conditions the sequence $(\P^n_x)$ has a weak limit point.

Prohorov's theorem states that on a separable and complete metric space $\Omega$, a sequence $(\mu_n)_{n \in \N}$ of probability measures on the Borel-$\sigma$-algebra is sequentially compact with respect to weak convergence in the space of probability measures on $\Omega$ if and only if the sequence is tight, i.e.,
\begin{equation}
\label{eq:Tightness}
\all \varepsilon > 0 \ex K_{\varepsilon} \subset \subset \Omega \all n \in \N: \mu_n(\Omega \setminus K_{\varepsilon}) < \varepsilon.
\end{equation}
Let $\Omega = V^{\N_0}$ and $d_{\text{disc}}$ be the discrete metric on $V$, i.e., $d_{\text{disc}}(x,y) = 1$ for $x\neq y$. We equip $\Omega$ with the corresponding product topology which can be achieved by choosing the metric
$$\rho(\omega, \omega') = \sum_{k=0}^{\infty} 2^{-k} \cdot d_{\text{disc}}(\omega_k, \omega'_k).$$
We leave the verification that $(\Omega, \rho)$ is complete and separable as an exercise. Although $\P^n_x$ is originally defined on $(V_n)^{\N_0}$, we can interpret it as a Borel-measure on $\Omega$ since it is uniquely defined on all cylinder sets if we think of the vertices in $V \setminus V_n$ as unreachable.

Since every subset of $V$ is open, the product topology is generated by the set of all cylinder sets
$$\mathcal{C} = \mengedef{A_1 \times \ldots \times A_n \times V^{\N}}{n \in \N, A_i \subseteq V}.$$
Note the following properties of $\mathcal{C}$.
\begin{equation}
\label{eq:CylinderSetsClosedUnderIntersection}
\all A,B \in \mathcal{C} : A \cap B  \in \mathcal{C}
\end{equation}
\begin{equation}
\label{eq:CylinderSetsComplement}
\all A \in \mathcal{C} \ex n \in \N, B_1,\ldots, B_n \in \mathcal{C}: \Omega \setminus A = B_1 \cup \ldots \cup B_n
\end{equation}
In particular, $\mathds{1}_A$ is bounded and continuous for all $A \in \mathcal{C}$.

We begin our investigation of when $(\P^n_x)_{n \in\N}$ is weakly convergent by observing a necessary condition.
\begin{lem}
	\label{lemma:InfiniteCxImliesGraveyard}
	Let $x \in V$. If $\lim_{n \to \infty} (c_n)_x = \infty$, then $\lim_{n\to \infty} \P^n_x[\omega_1 = y] = 0$ for all $y \in V$.
\end{lem}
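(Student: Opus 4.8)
The plan is to prove the claim directly from the explicit formula for one-step transition probabilities, using the monotonicity results from Section \ref{section:LimitingProperties}. The key observation is that for each fixed $y \in V$ and large enough $n$, we have
$$\P^n_x[\omega_1 = y] = \frac{c_n(x,y)}{(c_n)_x},$$
so the statement amounts to showing that the numerator stays bounded while the denominator diverges.

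First I would fix $y \in V$ and choose $m$ large enough that $x,y \in V_m$, so that the formula above makes sense for all $n \geq m$. By Theorem \ref{theorem:KantenLimesEx}, the sequence $c_n(x,y)$ is monotonically decreasing in $n$ and converges to the finite limit $c(x,y) = \lim_{n\to\infty} c_n(x,y) \in [0,\infty)$. In particular, the numerator is bounded above: $c_n(x,y) \leq c_m(x,y) < \infty$ for all $n \geq m$. Simultaneously, by assumption we have $(c_n)_x \to \infty$ as $n \to \infty$. Combining these two facts, I would conclude
$$0 \leq \lim_{n \to \infty} \P^n_x[\omega_1 = y] = \lim_{n\to \infty} \frac{c_n(x,y)}{(c_n)_x} \leq \lim_{n \to\infty} \frac{c_m(x,y)}{(c_n)_x} = 0,$$
where the last equality uses that the numerator is now a fixed constant and the denominator tends to infinity. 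This gives the desired limit of zero.

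The argument is essentially a boundedness-over-divergence estimate, so there is no serious obstacle; the only point requiring care is ensuring the transition-probability formula $\P^n_x[\omega_1 = y] = c_n(x,y)/(c_n)_x$ is legitimately being applied, i.e.\ that both $x$ and $y$ lie in $V_n$, which is handled by passing to $n \geq m$. One subtlety worth noting is the case $y = x$: here $c_n(x,x) = 0$ by definition of a weighted graph, so the ratio is identically zero and the conclusion is trivially true. With these observations, the proof reduces to the squeeze estimate displayed above.
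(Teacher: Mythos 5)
Your proof is correct and follows essentially the same route as the paper: write $\P^n_x[\omega_1=y]$ as $c_n(x,y)/(c_n)_x$, bound the numerator uniformly in $n$, handle $y=x$ separately via $c_n(x,x)=0$, and let the denominator diverge. The only cosmetic difference is that the paper obtains the uniform bound $c_n(x,y)\le R(x,y)^{-1}$ from the inequality $R(x,y)\le c_n(x,y)^{-1}$ for adjacent vertices, whereas you invoke the monotonicity of $c_n(x,y)$ from Theorem \ref{theorem:KantenLimesEx}; both are valid.
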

\begin{proof}
	Let $y \in V$, $y \neq x$, and $n \in \N$. If $c_n(x,y) > 0$, we have $R(x,y) \leq c_n(x,y)^{-1}$ because $R$ is the effective resistance of $G_n$ (cf. \cite[Lemma 4.3]{JorgensenPearse2009}). Hence, we always have $c_n(x,y) \leq R(x,y)^{-1}$ and thus
	$$\P^n_x[\omega_1 = y] = \frac{c_n(x,y)}{(c_n)_x} \leq \frac{R(x,y)^{-1}}{(c_n)_x}~.$$
	It follows that $\lim_{n \to \infty} \P^n_x[\omega_1 = y] = 0$.
	
	In the case of $x = y$, we have $c_n(x,x) = 0$ and thus $\P^n_x[\omega_1 = x] = 0$ for all $n \in \N$.\qed
\end{proof}
Lemma \ref{lemma:InfiniteCxImliesGraveyard} shows that $(\P^n_x)_{n\in \N}$ can not have a weak limit if $(c_n)_x \to \infty$. Indeed, suppose that $\P_x$ is a weak limit of $(\P^n_x)_{n \in \N}$. Since $\menge{\omega_1 = y}, \menge{\omega_1\in V} \in \mathcal{C}$, it follows that $\mathds{1}_{\menge{\omega_1 = y}}$ and $\mathds{1}_{\menge{\omega_1\in V}}$ are bounded and continuous with respect to the topology of $\Omega$. Since $\mu(A) = \int \mathds{1}_A ~ \text{d}\mu$ for any measure $\mu$, we have
$$1 = \lim_{n \to \infty} \P^n_x[\omega_1 \in V] = \P_x[\omega_1 \in V] = \sum_{y \in V} \P_x[\omega_1 = y] = \sum_{y \in V}  \lim_{n \to \infty} \P^n_x[\omega_1 = y] = 0$$
which is an obvious contradiction. It follows that the following condition (\ref{eq:A}) is necessary for the tightness of $(\P_x^n)_{n \in \N}$ and we will from now on assume that it is satisfied.
\begin{equation*}
\label{eq:A}
\tag{C}
\lim_{n\to \infty} (c_n)_x < \infty \all x \in V
\end{equation*}
\begin{rem}
	Note that (\ref{eq:A}) implies that the metric space $(V,R)$ must not contain limit points. Indeed, suppose that $R(x,y_n) \to 0$ for some sequence $(y_n)_{n \in\N}$ of vertices and $x \in V$. By the probabilistic representation (\ref{eq:EffectiveResistanceProabilistic}) of $R$ in $(V_n, c_n)$, we have
	$$0 \leq \frac{1}{(c_n)_x} \leq \frac{1}{(c_n)_x} \E_x^n\left[\sum_{k=0}^{\tau_{y_n}-1} \mathds{1}_x(\omega_k)\right] = R(x,y_n) \to 0$$
	which implies $(c_n)_x \to \infty$.
\end{rem}
\begin{lem}
	\label{lemma:TightnessEquivalentT1}
	Let $x_0 \in V$. The sequence $(\P^n_{x_0})_{n \in \N}$ is tight if and only if
	\begin{equation*}
	\tag{T1}
	\label{eq:T1}
	\all \varepsilon > 0 \ex m \in \N \all n \in \N : \sum_{y \notin V_m} \frac{c_n(x,y)}{(c_n)_x} < \varepsilon
	\end{equation*}
	holds for all $x \in V$.
\end{lem}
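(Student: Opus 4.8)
The plan is to prove the equivalence by working directly with the definition of tightness on the trajectory space $\Omega = V^{\N_0}$, exploiting the product structure of the topology. The key realization is that the transition probabilities $\P^n_{x_0}[\omega_{k+1} = y \mid \omega_k = x] = c_n(x,y)/(c_n)_x$ are exactly the quantities appearing in \ref{eq:T1}. Since we assume condition \ref{eq:A} throughout, every $(c_n)_x$ stays bounded, so the sum $\sum_{y \notin V_m} c_n(x,y)/(c_n)_x$ measures the one-step probability, started at $x$, of jumping outside the finite set $V_m$.

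First I would make the topological bookkeeping precise. A compact subset $K \subseteq \Omega$ can, by Tychonoff, be taken of the form $K = \prod_{k=0}^\infty F_k$ where each $F_k \subseteq V$ is finite (the product of finite sets is compact in the product topology, and these form a cofinal family of compacta). Thus tightness of $(\P^n_{x_0})$ is equivalent to: for every $\varepsilon > 0$ there exist finite sets $F_0, F_1, F_2, \ldots \subseteq V$ with $\P^n_{x_0}[\ex k : \omega_k \notin F_k] < \varepsilon$ for all $n$. Since $\P^n_{x_0}[\omega_0 = x_0] = 1$, we may take $F_0 = \menge{x_0}$; the content is controlling the later coordinates uniformly in $n$.

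For the direction \ref{eq:T1} $\Rightarrow$ tightness, I would build the compact set coordinate by coordinate, allocating an $\varepsilon$-budget $\varepsilon 2^{-k-1}$ to the event $\menge{\omega_k \notin F_k}$. The crucial estimate is a union bound over the first exit: by the Markov property,
$$\P^n_{x_0}[\omega_{k+1} \notin V_m] = \sum_{x} \P^n_{x_0}[\omega_k = x] \sum_{y \notin V_m} \frac{c_n(x,y)}{(c_n)_x} \leq \sup_{x \in F_k} \sum_{y \notin V_m} \frac{c_n(x,y)}{(c_n)_x},$$
provided the walk is known to be in the finite set $F_k$ at time $k$. Applying \ref{eq:T1} at each of the finitely many vertices $x \in F_k$ lets me pick $m = m(k)$ uniform in $n$, and setting $F_{k+1} = V_{m(k)}$ makes the one-step leakage small; summing the geometric budget gives total leakage below $\varepsilon$. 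One must iterate carefully so that each $F_{k+1}$ is chosen after $F_k$ is fixed, which is exactly what the nested exhaustion permits.

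For the converse, tightness $\Rightarrow$ \ref{eq:T1}, I would fix $x \in V$ and note that \ref{eq:A} guarantees $(c_n)_x$ is bounded, so if \ref{eq:T1} failed at $x$ there would be $\varepsilon > 0$ and a subsequence along which $\sum_{y \notin V_m} c_n(x,y)/(c_n)_x \geq \varepsilon$ for arbitrarily large $m$; by the strong Markov property at the hitting time $\tau_x$, this forces $\P^n_{x_0}[\omega_{\tau_x + 1} \notin V_m] \geq \varepsilon \cdot \P^n_{x_0}[\tau_x < \infty]$, contradicting the uniform concentration on a compact (hence coordinatewise-finite) set guaranteed by tightness, once one checks $x$ is visited with nonvanishing probability. \textbf{The main obstacle} I expect is precisely this last point: relating the \emph{one-step} escape from a vertex $x$ to the \emph{trajectory-level} tightness requires knowing that $\P^n_{x_0}$ assigns non-negligible, $n$-uniform mass to trajectories that reach $x$ and then take one more step; for vertices far from $x_0$ this demands a lower bound on the hitting probability of $x$, and the clean way to obtain it is again via the effective-resistance bound $c_n(x,y) \leq R(x,y)^{-1}$ from Lemma \ref{lemma:InfiniteCxImliesGraveyard} together with \ref{eq:A}, which keeps the relevant transition probabilities bounded away from degeneracy along the path $x_0 \to x$.
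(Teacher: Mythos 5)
Your forward direction ((\ref{eq:T1}) $\Rightarrow$ tightness) is essentially the paper's argument: both of you build the compact set as a product $\prod_k A_k$ of finite sets chosen recursively with a geometric error budget $\varepsilon 2^{-k}$, and both control the first-exit event via the Markov property and the finitely many one-step escape probabilities $\sum_{y \notin V_m} c_n(x,y)/(c_n)_x$ for $x$ ranging over the previous coordinate set. That half is fine.

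The converse is where you diverge, and where your sketch has a genuine gap. The paper's converse is elementary: if (\ref{eq:T1}) fails, it compares $\P^n_{x_0}(\Omega\setminus K)$ with $\P^n_{x_0}[\omega_1 \notin V_m]$ for $V_m \supseteq \pi_1(K)$, i.e.\ it only ever uses the \emph{first} step of the walk (and, as written, really treats the failure of (\ref{eq:T1}) at the starting vertex $x_0$ itself). You instead try to detect a failure at an arbitrary $x$ by stopping at $\tau_x$ and taking one more step, and two things go wrong. First, the tool you cite for the needed lower bound on the hitting probability, $c_n(x,y) \le R(x,y)^{-1}$ from Lemma \ref{lemma:InfiniteCxImliesGraveyard}, is an \emph{upper} bound on transition probabilities and gives nothing in the direction you need; the correct device would be the escape-probability identity $\P^n_{x_0}[\tau_x \le \tau_{x_0}^+] = ((c_n)_{x_0} R(x_0,x))^{-1}$, which together with (\ref{eq:A}) does yield $\inf_n \P^n_{x_0}[\tau_x < \infty] > 0$. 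Second, and more seriously, even with that bound the event $\menge{\tau_x < \infty,\ \omega_{\tau_x+1} \notin V_m}$ does not contradict tightness: a compact set is contained in a product $\prod_k F_k$ of finite sets, but $\bigcup_k F_k$ need not lie in any $V_m$, so a trajectory can escape $V_m$ at the random time $\tau_x + 1$ while remaining inside $K$. To repair this you would have to localize to $\menge{\tau_x \le T}$ for a deterministic $T$ with $\inf_n \P^n_{x_0}[\tau_x \le T] > 0$, and such a uniform bound is not available from the estimates at hand, since the one-step probabilities $c_n(u,v)/(c_n)_u$ along any fixed path may tend to $0$ as $n \to \infty$ (cf.\ Example \ref{ex:LimitGraphDegenerate}). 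So your converse does not close as sketched; the paper's first-coordinate argument is the route to take.
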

\begin{proof}
	First, assume that (\ref{eq:T1}) holds for all $x \in V$. Then, for $\delta > 0$ and $x \in V$, there exists $m = m(x) \in \N$ such that
	$$\sup_{n \in \N} \sum_{y \notin V_m} \frac{c_n(x,y)}{(c_n)_x} < \delta ~.$$
	We define $N(x, \delta) := V_m$ where $m$ is the smallest number satisfying the above inequality. For a finite set $F \subseteq V$, we define
	$$N(F, \delta) := \bigcup_{x \in F} N(x, \nicefrac{\delta}{|F|}).$$
	Then, 
	$$\sup_{n \in \N} \sum_{x \in F} \sum_{y \notin N(F, \delta)} \frac{c_n(x,y)}{(c_n)_x} \leq  \sum_{x \in F} \left( \sup_{n \in \N} \sum_{y \notin N(x, \delta)} \frac{c_n(x,y)}{(c_n)_x} \right) < \delta.$$
	Now let $\varepsilon > 0$ and fix $x_0 \in V$. Furthermore, let $A_0 := \menge{x_0}$ and for $n \in \N_0$
	$$A_{n+1} = N(A_n, \nicefrac{\varepsilon}{2^{n+1}}).$$
	Finally, set $K_{\varepsilon} := \prod_{n\in \N_0} A_n$. Since all $A_n$ are finite, $K_{\varepsilon}$ is compact in $\Omega$. Using the Law of total probability we compute
	\begin{align*}
	\P^n_{x_0}(\Omega \setminus K_{\varepsilon}) & = \sum_{k=0}^{\infty} \P^n_{x_0}(A_0 \times \ldots \times A_{k-1} \times (V \setminus A_k) \times V^{\N})\\
	& = \sum_{k=1}^{\infty} \prod_{m=1}^{k-1} \P^n_{x_0}[\omega_m \in A_m ~ | ~ \omega_{m-1} \in A_{m-1},\ldots, \omega_0 \in A_0] \\
	& \hspace{1.5cm}\cdot \P^n_{x_0}[\omega_k \notin A_k ~ | ~ \omega_{k-1} \in A_{k-1}, \ldots, \omega_0 \in A_0]\\
	& \leq \sum_{k=1}^{\infty} \P^n_{x_0}[\omega_k \notin A_k ~ | ~ \omega_{k-1} \in A_{k-1}, \ldots, \omega_0 \in A_0]\\
	& \leq \sum_{k=1}^{\infty} \sum_{x \in A_{k-1}} \P^n_{x_0}[\omega_k \notin A_k~ | ~ \omega_{k-1} = x ]\\
	& = \sum_{k=1}^{\infty} \sum_{x \in A_{k-1}} \sum_{y \notin A_k} \frac{c_n(x,y)}{(c_n)_x}\\
	& < \sum_{k=1}^{\infty} \frac{\varepsilon}{2^k} = \varepsilon 
	\end{align*}
	It follows that $(\P^n_{x_0})_{n\in \N}$ is tight. 
	
	We use contraposition to prove that tightness implies (\ref{eq:T1}), i.e. assume that
	$$\ex \varepsilon > 0 \all m \in \N \ex n \in \N : \sum_{y \notin V_m} \frac{c_n(x,y)}{(c_n)_x} \geq \varepsilon$$
	for some $x \in V$. Furthermore let $K \subset\subset \Omega$ be any compact set. Since $\Omega = V^{\N_0}$ is equipped with the product topology of the discrete topology on $V$, the projection $\pi_1 : \Omega \to V$, $\omega\mapsto \omega_1$ is continuous. Hence, $\pi_1(K) \subseteq V$ is compact and thus finite. It follows that there exists $m \in \N$ such that $\pi_1(K) \subseteq V_m$. By our assumption above there exists $n \in \N$ such that
	$$\sum_{y \notin V_m} \frac{c_n(x,y)}{(c_n)_x} \geq \varepsilon$$. Hence,
	\begin{align*}
	\P^n_{x_0}(\Omega\setminus K) & \geq \P^n_{x_0} [\omega_1 \notin \pi_1(K)] \geq \P^n_{x_0}[\omega_1 \notin V_m]\\
	& = \sum_{y \notin V_m} \P^n_{x_0}[\omega_1 = y] = \sum_{y \notin V_m} \frac{c_n(x,y)}{(c_n)_x} \geq \varepsilon.
	\end{align*}
	This implies that the sequence $(\P^n_{x_0})_{n \in \N}$ is not tight.\qed
\end{proof}
\begin{lem}
	\label{lemma:T1equivalentT2}
	Let $x \in V$. Under the assumption (\ref{eq:A}), the condition (\ref{eq:T1}) is equivalent to 
	\begin{equation*}
	\tag{T2}
	\label{eq:T2}
	\lim_{n \to \infty} (c_n)_x = c_x.
	\end{equation*}
\end{lem}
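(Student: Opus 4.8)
The plan is to fix $x \in V$ and translate both conditions into statements about a single two-parameter tail quantity. Write $L := \lim_{n\to\infty}(c_n)_x$, which is finite by (C), and recall that by Theorem \ref{theorem:KantenLimesEx} and Corollary \ref{cor:MonotonyOfEdgeWeights} the relevant sequences are monotone: for each fixed $y$ the edge weight $c_n(x,y)$ decreases to $c(x,y)$, while $(c_n)_x$ increases to $L$. Since $c_n(x,y) = 0$ whenever $y \notin V_n$, the quantity appearing in (T1) is
$$
S_{m,n} := \sum_{y \notin V_m} \frac{c_n(x,y)}{(c_n)_x} = \frac{N_{m,n}}{(c_n)_x}, \qquad N_{m,n} := (c_n)_x - \sum_{y \in V_m} c_n(x,y),
$$
where $N_{m,n} = \sum_{y \in V_n \setminus V_m} c_n(x,y)$ for $n \ge m$ and $S_{m,n} = 0$ for $n \le m$ (as $V_n \subseteq V_m$ forces the sum to vanish). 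I would first record the elementary bound $c_x \le L$ (let $n \to \infty$ in $\sum_{y\in V_m} c_n(x,y) \le (c_n)_x$, then $m \to \infty$) and observe that $N_{m,n}$ is \emph{increasing} in $n$, since $(c_n)_x$ increases while the finite sum $\sum_{y \in V_m} c_n(x,y)$ decreases.

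Next I would compute the two relevant limits. Letting $n \to \infty$ with $m$ fixed, the finite sum $\sum_{y\in V_m} c_n(x,y)$ converges to $\sum_{y \in V_m} c(x,y)$, so $N_{m,n} \uparrow T_m := L - \sum_{y\in V_m} c(x,y)$ and hence $\lim_{n\to\infty} S_{m,n} = T_m/L$. Letting $m \to\infty$ gives $T_m \downarrow L - c_x$, with $T_m \ge L - c_x \ge 0$ for every $m$, so that (T2), namely $L = c_x$, is equivalent to $\lim_{m} T_m = 0$.

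For the implication (T2) $\Rightarrow$ (T1) I would pass to a bound that is uniform in $n$: using $N_{m,n} \le T_m$ together with $(c_n)_x \ge (c_m)_x > 0$ for $n \ge m$, one obtains $\sup_{n} S_{m,n} \le T_m/(c_m)_x$. Under (T2) we have $T_m \to 0$ while $(c_m)_x \to L > 0$, so this upper bound tends to $0$; given $\varepsilon > 0$, any $m$ with $T_m/(c_m)_x < \varepsilon$ then yields $S_{m,n} < \varepsilon$ for all $n$, which is (T1). For the converse I would argue by contraposition: if (T2) fails then $L > c_x$, so $T_m \ge L - c_x > 0$ for all $m$, whence $\lim_{n\to\infty} S_{m,n} = T_m/L \ge (L-c_x)/L > 0$; thus with $\varepsilon := (L-c_x)/(2L)$, every $m$ admits some large $n$ with $S_{m,n} \ge \varepsilon$, negating (T1).

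The main obstacle is exactly the mismatch between the quantifier structure of (T1) — which demands smallness of $S_{m,n}$ \emph{uniformly over all $n$} — and the fact that all of the limiting data ($c(x,y)$, $c_x$, $L$, and the pointwise limit $T_m/L$ of $S_{m,n}$) are only limits in $n$. The technical point that resolves this is the uniform estimate $\sup_n S_{m,n} \le T_m/(c_m)_x$, which works only because the two monotonicities act simultaneously in the favorable directions: the numerator never exceeds its limit $T_m$, while the denominator stays above the positive, increasing value $(c_m)_x$. Once this uniform bound is available, both directions are routine.
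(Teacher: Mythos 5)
Your proof is correct. The forward direction (T2) $\Rightarrow$ (T1) is in substance the paper's argument: both exploit the monotonicities from Corollary \ref{cor:MonotonyOfEdgeWeights} to get a bound on the tail sum that is uniform in $n$ and tends to $0$ as $m \to \infty$ (the paper bounds $1 - \sum_{y \in V_m} \nicefrac{c_n(x,y)}{(c_n)_x}$ by $1 - \sum_{y \in V_m} \nicefrac{c(x,y)}{c_x}$ using monotonicity of the normalized quotients, while you bound $N_{m,n}/(c_n)_x$ by $T_m/(c_m)_x$ using the two unnormalized monotonicities separately; these are interchangeable). The converse is where you genuinely diverge: the paper argues directly, with an $\varepsilon/3$-decomposition over $V_m$, $V_{m_1}$ and $V_{m_2}$, that $\sum_{y \in V_n} c_n(x,y) \to \sum_{y \in V} c(x,y)$, whereas you contrapose and compute the exact limit $\lim_{n} S_{m,n} = T_m/L \geq (L - c_x)/L > 0$ when $L > c_x$, which violates (T1) with the explicit $\varepsilon = (L-c_x)/(2L)$. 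Your contrapositive version is shorter and avoids the quantifier juggling of the direct estimate; the paper's version has the mild advantage of exhibiting the convergence $\lim_n \sum_{y \in V_n} c_n(x,y) = \sum_{y\in V}c(x,y)$ explicitly, which is the form in which (T2) is invoked later (e.g.\ in Theorem \ref{theorem:WeakConvergence}). Both you and the paper implicitly restrict to $m,n$ large enough that $x \in V_m \cap V_n$ so that $(c_m)_x$ and $\P^n_x$ are defined; this is harmless.
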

\begin{proof}
	Let $\varepsilon > 0$ and assume that (\ref{eq:T2}) holds. Then, 
	$$\lim_{n \to \infty} \frac{c_n(x,y)}{(c_n)_x} = \frac{c(x,y)}{c_x}$$
	and there exists $m \in \N$ such that 
	$$1 - \sum_{y \in V_m} \frac{c(x,y)}{c_x} < \varepsilon.$$
	Let $n \in \N$. For $n \leq m$ we have $V_n \subseteq V_m$ and thus
	$$\sum_{y \notin V_m} \frac{c_n(x,y)}{(c_n)_x} = 1 - \sum_{y \in V_m} \frac{c_n(x,y)}{(c_n)_x} = 0.$$
	For $n > m$ we use the monotony of the quotients (see Corollary \ref{cor:MonotonyOfEdgeWeights}) and get
	$$\sum_{y \notin V_m} \frac{c_n(x,y)}{(c_n)_x} = 1 - \sum_{y \in V_m} \frac{c_n(x,y)}{(c_n)_x} \leq 1 - \sum_{y \in V_m} \frac{c(x,y)}{c_x} < \varepsilon.$$
	Hence, we have proven that (\ref{eq:T1}) holds.
	
	Now assume that (\ref{eq:T1}) holds. By Corollary \ref{cor:MonotonyOfEdgeWeights} and (\ref{eq:A}), we have
	$$c_x = \lim_{n \to \infty} \sum_{y \in V_n} c(x,y) \leq \lim_{n\to \infty} \sum_{y \in V_n} c_n(x,y) = \lim_{n \to \infty} (c_n)_x <\infty.$$
	Let $\varepsilon > 0$. Then the convergence of the series $\sum_{y \in V} c(x,y)$ implies that there exists $m_1 \in \N$, such that
	$$\sum_{y \notin V_{m_1}} c(x,y)  < \frac{\varepsilon}{3}~.$$
	Furthermore, let $D_x := \lim_{n \to \infty} (c_n)_x$ and choose $m_2 \in \N$ such that 
	$$\sum_{y \notin V_{m_2}} \frac{c_n(x,y)}{(c_n)_x} < \frac{\varepsilon}{3D_x}$$
	for all $n \in \N$. Now, let $m = \max(m_1,m_2)$ and let $n \in \N$ such that 
	$$\left\vert \sum_{y \in V_m} \lb c_n(x,y) - c(x,y) \rb\right\vert < \frac{\varepsilon}{3}~.$$
	This is possible due to the fact that $c_n(x,y) \to c(x,y)$ and that $V_m$ is finite. It follows that
	\begin{align*}
	\left\vert \sum_{y \in V_n} \lb c_n(x,y) - c(x,y) \rb \right\vert & \leq \left\vert \sum_{y \in V_m} \lb c_n(x,y) - c(x,y) \rb \right\vert + \left\vert \sum_{y \notin V_m} \lb c_n(x,y) - c(x,y) \rb \right\vert\\
	& < \frac{\varepsilon}{3} + \left\vert \sum_{y \notin V_{m_2}} c_n(x,y) \right\vert + \left\vert \sum_{y \notin V_{m_1}} c(x,y)\right\vert\\
	& < \frac{\varepsilon}{3} +(c_n)_x \cdot \frac{\varepsilon}{3D_x} + \frac{\varepsilon}{3} \leq \varepsilon
	\end{align*}
	The last inequality is due to Corollary \ref{cor:MonotonyOfEdgeWeights} which implies that 
	$$D_x = \lim_{m \to \infty} (c_m)_x \geq (c_n)_x$$ 
	for all $n \in \N$. Hence, we have shown that
	\begin{equation*}
	\lim_{n\to \infty} (c_n)_x = \lim_{n \to \infty} \sum_{y \in V_n} c_n(x,y) = \lim_{n\to \infty} \sum_{y \in V_n} c(x,y) = \sum_{y \in V} c(x,y). \tag*{\qed}
	\end{equation*}
\end{proof}

\begin{theorem}
\label{theorem:WeakConvergence}
	Fix $x_0 \in V$. The sequence of random walks $(\P^n_{x_0})_{n \in \N}$ on $(V_n, c_n)$ starting in $x_0$ is sequentially compact in the set of probability measures on $V^{\N_0}$ if and only if
	\begin{equation}
	\label{eq:TightnessCondition}
	\sum_{y \in V} \lim_{n \to \infty} c_n(x,y) = \lim_{n \to \infty} \sum_{y \in V_n} c_n(x,y) < \infty
	\end{equation}
	for all $x \in V$.
\end{theorem}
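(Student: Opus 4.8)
The plan is to combine Prohorov's theorem with the chain of equivalences already established for tightness and then translate the conditions (\ref{eq:A}) and (\ref{eq:T2}) into the edge-weight form of (\ref{eq:TightnessCondition}). Since $(\Omega,\rho)$ with $\Omega = V^{\N_0}$ is complete and separable, Prohorov's theorem tells us that $(\P^n_{x_0})_{n\in\N}$ is sequentially compact in the space of probability measures on $\Omega$ if and only if it is tight. By Lemma \ref{lemma:TightnessEquivalentT1}, tightness is in turn equivalent to condition (\ref{eq:T1}) holding at every vertex $x\in V$. The theorem therefore reduces to the single equivalence: (\ref{eq:T1}) holds for all $x\in V$ if and only if (\ref{eq:TightnessCondition}) holds for all $x\in V$.

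First I would set up the dictionary between the two formulations. By Definition \ref{definition:LimitGraph} the left-hand side of (\ref{eq:TightnessCondition}) equals $\sum_{y\in V} c(x,y) = c_x$, while the right-hand side equals $\lim_{n\to\infty}(c_n)_x$, a limit that exists in $(0,+\infty]$ by the monotonicity in Corollary \ref{cor:MonotonyOfEdgeWeights}. Hence (\ref{eq:TightnessCondition}) at $x$ is precisely the assertion $c_x = \lim_{n\to\infty}(c_n)_x < \infty$, that is, the conjunction of (\ref{eq:A}) and (\ref{eq:T2}) at $x$. The implication from (\ref{eq:TightnessCondition}) to (\ref{eq:T1}) is then immediate: assuming (\ref{eq:TightnessCondition}) for all $x$ supplies (\ref{eq:A}) and (\ref{eq:T2}) at every vertex, and Lemma \ref{lemma:T1equivalentT2} converts (\ref{eq:T2}) into (\ref{eq:T1}).

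The reverse implication is where I expect the main obstacle, since Lemma \ref{lemma:T1equivalentT2} is only available once the standing hypothesis (\ref{eq:A}) is in force, whereas here (\ref{eq:A}) is not assumed but must be extracted from (\ref{eq:T1}) itself. The crucial claim is that (\ref{eq:T1}) at $x$ already forces $\lim_{n\to\infty}(c_n)_x < \infty$. I would argue this by contraposition: if $\lim_{n\to\infty}(c_n)_x = \infty$, then Lemma \ref{lemma:InfiniteCxImliesGraveyard} gives $c_n(x,y)/(c_n)_x = \P^n_x[\omega_1 = y] \to 0$ for each fixed $y$, so for any fixed $m$ the finite sum $\sum_{y\in V_m} c_n(x,y)/(c_n)_x \to 0$, whence $\sum_{y\notin V_m} c_n(x,y)/(c_n)_x \to 1$ as $n\to\infty$; choosing $\varepsilon = \tfrac12$ shows that no $m$ can satisfy (\ref{eq:T1}), so (\ref{eq:T1}) fails at $x$. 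Consequently, if (\ref{eq:T1}) holds at every vertex then so does (\ref{eq:A}); Lemma \ref{lemma:T1equivalentT2} then upgrades (\ref{eq:T1}) to (\ref{eq:T2}), i.e. $c_x = \lim_{n\to\infty}(c_n)_x$, and together with the finiteness just obtained this is exactly (\ref{eq:TightnessCondition}). Assembling the two implications with the Prohorov and Lemma \ref{lemma:TightnessEquivalentT1} reduction yields the theorem.
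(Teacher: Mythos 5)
Your proof is correct and follows the same route as the paper, whose entire proof is the one-line invocation of Prohorov's theorem together with Lemma \ref{lemma:TightnessEquivalentT1} and Lemma \ref{lemma:T1equivalentT2}. Your additional step extracting (\ref{eq:A}) from (\ref{eq:T1}) via Lemma \ref{lemma:InfiniteCxImliesGraveyard} makes explicit a point the paper covers only through its standing assumption of (\ref{eq:A}) after the preceding discussion, so if anything your write-up is the more self-contained one.
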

\begin{proof}
	The claim follows by Prokhorov's theorem, Lemma \ref{lemma:TightnessEquivalentT1} and Lemma \ref{lemma:T1equivalentT2}. \qed
\end{proof}

The following example shows that the condition (\ref{eq:TightnessCondition}) for tightness is not automatically satisfied and thus can not be omitted.

\begin{exa}
	\label{ex:NoWeakConvergence}
	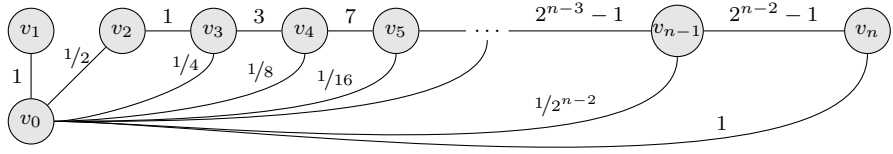
\begin{figure}
		\centering
		\tikzstyle{perp} = [out=0, in=270, looseness = 0.5]
		\tikzstyle{place} = [above, pos = 0.7]
		\begin{tikzpicture}[-,auto, node distance = 1.2cm, every loop/.style={}, vertex/.style={draw, circle, fill=black!10, inner sep=0mm, minimum size=6mm}]
		\node[vertex] (A) {$v_0$};
		\node[vertex, above of= A] (B) {$v_1$};
		\node[vertex,right of =B] (C) {$v_2$};
		\node[vertex,right of=C] (D) {$v_3$};
		\node[vertex,right of=D] (E) {$v_4$};
		\node[vertex,right of=E] (F) {$v_5$};
		\node[right of=F] (G) {$\ldots$};
		\node[vertex, right of=G, node distance = 2.5cm] (H) {$v_{n-1}$};
		\node[vertex, right of=H, node distance = 2.5cm] (I) {$v_n$};
		
		\path 	(A) edge node {$1$} (B)
		(A) edge node[above] {$\nicefrac{1}{2}$} (C)
		(A) edge [perp] node [place] {$\nicefrac{1}{4}$} (D)
		(A) edge [perp] node [place] {$\nicefrac{1}{8}$} (E)
		(A) edge [perp] node [place] {$\nicefrac{1}{16}$} (F)
		(A) edge [perp] (G)
		(A) edge [perp] node [place] {$\nicefrac{1}{2^{n-2}}$} (H)
		(A) edge [perp] node [place] {$1$} (I)
		(C) edge node {$1$} (D)
		(D) edge node {$3$} (E)
		(E) edge node {$7$} (F)
		(F) edge (G)
		(G) edge node {$2^{n-3} - 1$} (H)
		(H) edge node {$2^{n-2} - 1$} (I);
		\end{tikzpicture}
		\caption{\label{fig:ExampleTightness}A sequence of graphs not satisfying condition (\ref{eq:TightnessCondition}).}
	\end{figure}
	Consider the following sequence of graphs, denoted by $G_n = (V_n, c_n)$ where $c_n$ is given by
	\begin{align*}
	c_n(v_0,v_k) & = c_n(v_k, v_0) =  \begin{cases} \nicefrac{1}{2^{k-1}} & , ~ 1 \leq k < n\\
	1 & , ~ k = n
	\end{cases},\\
	c_n(v_k,v_{k+1}) & = c(v_{k+1}, v_k) = 2^{k-1} - 1 \text{ for } 1 \leq k \leq n-1
	\end{align*}
	and $c_n(x,y) = 0$ otherwise, see Figure \ref{fig:ExampleTightness}. Let $R_n$ be the effective resistance of $G_n$. We verify that $R_n$ agrees with $R_{n-1}$ on $V_{n-1}$ by applying the star-mesh transform (Lemma \ref{lemma:StarMesh}) to $v_n$ in $G_n$. This yields a graph $(V_{n-1}, c')$ with effective resistance $R_n \restr_{V_{n-1}}$ and
	$$c'(x,y) = c_n(x,y) + \frac{c_n(x,v_n)c_n(v_n,y)}{c_{v_n}}.$$
	Hence, for all $x,y \in V_{n-1}$ such that $\menge{x,y} \neq \menge{v_0,v_{n-1}}$, we have $c'(x,y) = c_n(x,y) = c_{n-1}(x,y)$. Furthermore,
	$$c'(v_0, v_{n-1}) = \frac{1}{2^{n-2}} + \frac{1 \cdot (2^{n-2} - 1)}{1 + 2^{n-2} - 1} = \frac{2^{n-2}}{2^{n-2}} = 1 = c_{n-1}(v_0, v_{n-1}).$$
	It follows that $c' = c_n$ and thus $R_{n-1} = R_n \restr_{V_{n-1}}$.
	
	Now we will see that $(G_n)_{n \in \N}$ does not satisfy (\ref{eq:TightnessCondition}). We have
	$$c(v_0, v_k) = \lim_{n \to\infty} c_n(v_0, v_k) = \frac{1}{2^{k-1}}$$
	for all $k \in \N$ and thus
	$$c_{v_0} = \sum_{y \in V} c(v_0,y) = \sum_{k = 1}^{\infty} \frac{1}{2^{k-1}} = 2.$$
	However, 
	$$\lim_{n \to \infty} (c_n)_{v_0} =\lim_{n \to \infty}  \sum_{y \in V_n} c_n(v_0,y) = \lim_{n \to \infty}  \left( 1 + \sum_{k=1}^{n-1} \frac{1}{2^{k-1}}\right) = 3.$$ 
\end{exa}
\begin{rem}
	Note that Example \ref{ex:NoWeakConvergence} can be modified such that $c_n(v_0, v_n) = n$ and 
	$$c_n(v_k, v_{k+1}) = \frac{2^{k-1}(k+1)^2}{2^{k-1}+1} - (k+1).$$ 
	This will still yield a sequence of graphs with invariant effective resistances and $c_{v_0} = 2$. However, in this case $\lim_{n \to\infty} (c_n)_{v_0} \geq n \to \infty$.
\end{rem}

In the case of tightness, Prohorov's theorem only implies the existence of a weak limit point of $(\P_x^n)_{n \in \N}$. In our context, we can obtain the weak convergence of the whole sequence and identify the weak limit as random walk of the limit graph $G_R$.
\begin{theorem}
	\label{theorem:WeakLimitIsRandomWalkOnLimitGraph}
	If $c_x = \lim_{n \to \infty} (c_n)_x < \infty$ for all $x \in V$, then $(\P_x^n)_{n \in\infty}$ converges weakly to the random walk $\P_x$ of the limit graph $G_R$ for all $x \in V$.
\end{theorem}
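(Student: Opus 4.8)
The plan is to leverage the tightness already established in Theorem \ref{theorem:WeakConvergence}, combine it with convergence of finite-dimensional distributions, and upgrade to weak convergence of the full sequence by a subsequence argument. First I would record that under the present hypothesis the limit random walk $\P_x$ on $G_R$ is genuinely well defined: by Theorem \ref{theorem:KantenLimesEx} we have $c_x = \lim_n (c_n)_x \in (0,\infty)$, so the one-step law $p(y,z) := c(y,z)/c_y$ is a bona fide probability distribution on $V$ for each $y$ (it sums to $1$ since $\sum_z c(y,z) = c_y$), and $\P_x$ is the associated Markov measure on $\Omega = V^{\N_0}$ started at $x$. The elementary but crucial observation is that transition probabilities converge: for fixed $y,z \in V$ and $n$ large enough that $y \in V_n$, one has $p_n(y,z) := c_n(y,z)/(c_n)_y \to c(y,z)/c_y = p(y,z)$, using $c_n(y,z) \to c(y,z)$ from Theorem \ref{theorem:KantenLimesEx}, $(c_n)_y \to c_y$ from the hypothesis, and $c_y > 0$.

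Next I would establish convergence of finite-dimensional distributions. For a single finite path $B = \menge{\omega_0 = x_0, \ldots, \omega_k = x_k}$ and $n$ large enough that $\menge{x_0,\ldots,x_k} \subseteq V_n$, the Markov property gives
$$\P^n_x[B] = \delta_x(x_0) \prod_{i=0}^{k-1} p_n(x_i, x_{i+1}),$$
which by the previous step converges to $\delta_x(x_0)\prod_{i=0}^{k-1} p(x_i,x_{i+1}) = \P_x[B]$.

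Now I would bring in tightness and identify the limit. The hypothesis $c_x = \lim_n (c_n)_x < \infty$ for all $x$ is precisely condition (\ref{eq:TightnessCondition}), so by Theorem \ref{theorem:WeakConvergence} (through Lemma \ref{lemma:T1equivalentT2}) the sequence $(\P^n_x)_{n\in\N}$ is tight, hence sequentially compact by Prohorov's theorem. Let $\mu$ be any subsequential weak limit, $\P^{n_j}_x \to \mu$. Each such thin cylinder $B$ lies in $\mathcal{C}$, so $\mathds{1}_B$ is bounded and continuous, and weak convergence together with the finite-dimensional convergence above yields $\mu(B) = \lim_j \P^{n_j}_x(B) = \P_x(B)$. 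The thin cylinders, together with $\emptyset$ and $\Omega$, form a $\pi$-system (closedness under intersection as in (\ref{eq:CylinderSetsClosedUnderIntersection})) generating the Borel $\sigma$-algebra of $\Omega$; since $\mu$ and $\P_x$ are both probability measures they agree on $\Omega$, so Dynkin's uniqueness theorem forces $\mu = \P_x$. Thus every subsequential limit equals $\P_x$, and as $\Omega$ is separable the space of Borel probability measures is metrizable for weak convergence, so a tight sequence with a unique subsequential limit converges; hence $\P^n_x \to \P_x$ weakly.

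The main obstacle is guaranteeing that no probability mass escapes to infinity in finite time, which is exactly what the hypothesis secures via tightness; without it the finite-dimensional limits need not reassemble into a probability measure, as the coordinate-escape phenomenon of Lemma \ref{lemma:InfiniteCxImliesGraveyard} shows. The remaining care is bookkeeping: the quotients $p_n(y,z)$ are only defined once $y \in V_n$, so all convergence statements are read for large $n$, and the identification of the limit must proceed through the $\pi$-system argument rather than through naive term-by-term agreement on all of $\mathcal{C}$, whose cylinders may constrain coordinates to infinite sets.
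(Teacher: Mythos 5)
Your argument is correct and follows essentially the same route as the paper: tightness via Theorem \ref{theorem:WeakConvergence} and Prohorov, identification of every subsequential limit with $\P_x$ by computing finite-dimensional distributions on thin cylinders, and the standard upgrade from unique subsequential limit to convergence of the whole sequence. The only cosmetic differences are that you identify the limit directly on the $\pi$-system of thin cylinders via Dynkin's theorem (where the paper first extends to all of $\mathcal{C}$ by countable disjoint unions) and that you invoke metrizability of weak convergence where the paper uses the sub-subsequence criterion; both variants are sound.
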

\begin{proof}
	Let $x \in V$. By Theorem \ref{theorem:WeakConvergence}, $(\P_x^n)_{n \in \N}$ is tight. Hence, there exists a subsequence $(\P_x^{n_k})_{k \in \N}$ which weakly converges to a measure $\mu$. For $x_0, \ldots, x_n \in V$, let $A := \menge{x_0} \times \ldots\times \menge{x_n} \times V^{\N} \in \mathcal{C}$. Then, $\mathds{1}_A$ is bounded and continuous and it follows that
	\begin{align*}
	\mu[\omega_0 = x_0, \ldots, \omega_n = x_n] & = \int \mathds{1}_A ~ \text{d}\mu = \lim_{k\to\infty} \int \mathds{1}_A ~ \text{d}\P_x^{n_k}\\
	& = \lim_{n \to \infty} \P_x^{n_k}[\omega_0 = x_0, \ldots,\omega_n = x_n] \\
	& = \lim_{n \to \infty} \mathds{1}_{x}(x_0) \cdot \prod_{l=0}^{n-1} \frac{c_{n_k}(x_l, x_{l+1})}{(c_{n_k})_{x_l}} \\
	& = \mathds{1}_{x}(x_0) \cdot \prod_{l=0}^{n-1} \frac{c(x_l, x_{l+1})}{c_{x_l}} = \P_x[\omega_0 = x_0, \ldots,\omega_n = x_n].
	\end{align*}
	For $C \in \mathcal{C}$, there exist $A_0,\ldots, A_n \subseteq V$ such that $C = A_0\times \ldots\times A_n \times V^{\N}$. Note that $A_0 \times \ldots \times A_n$ is at most countably infinite. Since
	$$C = \bigcup_{(x_0,\ldots,x_n) \in A_0 \times \ldots \times A_n} \menge{x_0} \times \ldots \times \menge{x_n} \times V^{\N}$$
	and this is a disjoint union, the $\sigma$-additivity of $\mu$ and $\P_x$ implies that $\mu(C) = \P_x(C)$. Hence, $\mu$ and $\P_x$ agree on $\mathcal{C}$.
	Since $(\Omega, \rho)$ is a countable product of separable metric spaces, its Borel-$\sigma$-algbra $\mathcal{B}$ satisfies
	$$\mathcal{B} = \mathcal{B}(V) \otimes \mathcal{B}(V) \otimes \ldots = \sigma(\mathcal{C}).$$
	By (\ref{eq:CylinderSetsClosedUnderIntersection}), $\mathcal{C}$ is closed under finite intersections. It is a standard result that $\mu \restr_{\mathcal{C}} = \P_x\restr_{\mathcal{C}}$ implies $\mu = \P_x$. 
	
	Hence, we have shown that every weakly convergent subsequence of $(\P_x^n)_{n \in \N}$ converges to $\P_x$. Since every subsequence of a tight sequence is again tight, we obtain the following statement. Every subsequence of $(\P_x^n)_{n \in \N}$ contains a subsequence converging to $\P_x$. This is equivalent to $(\P_x^n)_{n \in \N}$ converging to $\P_x$.\qed
\end{proof}
By (\ref{eq:EffectiveResistanceProabilistic}), we have for $x,y \in V_n$,
\begin{equation}
\label{eq:EffectiveResistanceProbabilisticSequence}
R(x,y) = \frac{1}{(c_n)_x} \int \sum_{k=0}^{\tau_y-1} \mathds{1}_x(\omega_k) ~ \text{d} \P_x^n
\end{equation}
for all $n \in \N$. Since $(c_n)_x \to c_x$ and $\P^n_x$ converges weakly to $\P_x$, one may hope to get an analogous equation in terms of $c_x$ and $\P_x$ which would then hold for all $x,y \in V$. The following example will show that this is in general false.

\begin{exa}
	We consider two sequences of graphs $G_n = (V_n, c_n)$ and $H_n = (V_n, \hat{c}_n)$ with $V_n = \mengedef{z \in \Z}{|z| \leq n}$, 
	$$c_n(x,y) = \begin{cases}
	0 & , |x-y| \neq 1\\
	2^{\min(|x|,|y|)} & , |x-y| = 1
	\end{cases},$$
	see Figure \ref{fig:ExampleWeakConvergenceFail1}, and $\hat{c}_n(x,y) = c_n(x,y) + \mathds{1}_{\menge{-n,n}}(\menge{x,y}) \cdot 2^{n-2}$, see Figure \ref{fig:ExampleWeakConvergenceFail2}. 
	\begin{figure}
	\centering
	\begin{tikzpicture}[-,auto, node distance = 1.4cm, every loop/.style={}, vertex/.style={draw, circle, fill=black!10, inner sep=0mm, minimum size=7mm}]
		\node[vertex] (A) {$-n$};
		\node[right of=A] (B) {$\ldots$};
		\node[vertex, right of=B] (C) {$-2$};
		\node[vertex, right of=C] (D) {$-1$};
		\node[vertex, right of=D] (E) {$0$};
		\node[vertex, right of=E] (F) {$1$};
		\node[vertex, right of=F] (G) {$2$};
		\node[right of=G] (H) {$\ldots$};
		\node[vertex, right of=H] (I) {$n$};
		
		\path (A) edge node {$2^{n-1}$} (B)
			(B) edge node {$4$} (C)
			(C) edge node {$2$} (D)
			(D) edge node {$1$} (E)			
			(E) edge node {$1$} (F)
			(F) edge node {$2$} (G)
			(G) edge node {$4$} (H)			
			(H) edge node {$2^{n-1}$} (I);		
	\end{tikzpicture}
	\caption{\label{fig:ExampleWeakConvergenceFail1}The graph $G_n$.}
\end{figure}
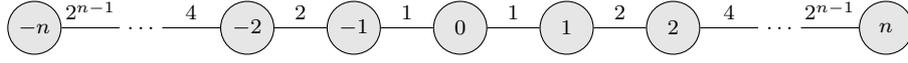
\begin{figure}
	\centering
	\begin{tikzpicture}[-,auto, node distance = 1.4cm, every loop/.style={}, vertex/.style={draw, circle, fill=black!10, inner sep=0mm, minimum size=7mm}]
	\node[vertex] (A) {$-n$};
	\node[right of=A] (B) {$\ldots$};
	\node[vertex, right of=B] (C) {$-2$};
	\node[vertex, right of=C] (D) {$-1$};
	\node[vertex, right of=D] (E) {$0$};
	\node[vertex, right of=E] (F) {$1$};
	\node[vertex, right of=F] (G) {$2$};
	\node[right of=G] (H) {$\ldots$};
	\node[vertex, right of=H] (I) {$n$};
	
	\path (A) edge node {$2^{n-1}$} (B)
	(B) edge node {$4$} (C)
	(C) edge node {$2$} (D)
	(D) edge node {$1$} (E)			
	(E) edge node {$1$} (F)
	(F) edge node {$2$} (G)
	(G) edge node {$4$} (H)			
	(H) edge node {$2^{n-1}$} (I)
	(A) edge[out=65, in=115, looseness = 0.2] node {$2^{n-2}$} (I);		
	\end{tikzpicture}
	\caption{\label{fig:ExampleWeakConvergenceFail2}The graph $H_n$.}
\end{figure}
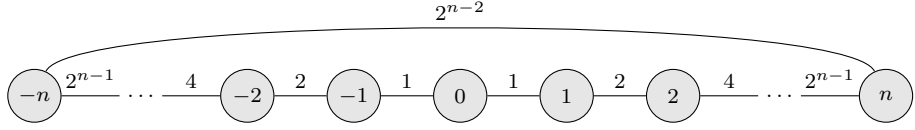
Let $n \in \N$. Note that $G_n$ and $H_n$ are respectively obtained from $G_{n+1}$ and $H_{n+1}$ by applying the star-mesh transform (see Lemma \ref{lemma:StarMesh}) to the vertices $n+1$ and $-(n+1)$ successively. Hence, $R_{G_{n+1}}(x,y) = R_{G_n}(x,y)$ and $R_{H_{n+1}}(x,y) = R_{H_n}(x,y)$ for all $x,y \in V_n$. It follows that $R_{G_n}$ and $R_{H_n}$ are finite restrictions of resistance metrics $R_1$ and $R_2$ on $V$. 

Since $G_n$ is a tree, its effective resistance equals its geodesic metric. Let $f(x) := \sum_{k=0}^{|x|-1} 2^{-k}$. Then, for $x,y \in V_n$, we have
$$R_1(x,y) = R_{G_n}(x,y) = \begin{cases}
f(x) + f(y) & , ~ x \cdot y \leq 0\\
f(\max(|x|, |y|)) - f(\min(|x|,|y|)) & , ~ x \cdot y  > 0
\end{cases}.$$
Note that for $x > 0$, $R_1(x,n) = R_1(0,n) - R_1(0,x) = 2 - 2^{-(n-1)} - R_1(0,x)$. 

Let $x,y \in V_n$ and w.l.o.g. assume that $x < y$. Using basic network reduction rules for parallel and series circuits, one obtains
\begin{align*}
R_2(x,y)  & = \lb \frac{1}{R_1(x,y)} + \frac{1}{R_1(x,-n) + 2^{-(n-2)} + R_1(y,n)} \rb^{-1} \\
& = \lb \frac{1}{R_1(x,y)} + \frac{1}{4 - R_1(x,y)} \rb^{-1} = \frac{1}{4}R_1(x,y)(4-R_1(x,y)).\\
\end{align*}
Hence, $R_1$ and $R_2$ are different resistance metrics. However, both limit graphs are the same, namely $G = (\Z, c)$ where $c(x,y) = c_n(x,y)$ if $x,y \in V_n$. It follows that both sequences of random walks on $G_n$ and on $H_n$ have the same weak limit with respect to our chosen topology. Hence, it is impossible to write both $R_1$ and $R_2$ in terms of the random walk on $G$ as in (\ref{eq:EffectiveResistanceProbabilisticSequence}).
\end{exa}
\begin{rem}
	$R_1$ and $R_2$ are the free and wired effective resistance of $G$, respectively. In view of Proposition \ref{prop:LimitGraphConsistency}, the above example is not surprising since $R_1 \neq R_2$ but both are resistance metrics induced by the same graph. Hence, it is impossible to find a probabilistic representation which fits $R_1$ and $R_2$ simultaneously but only uses the random walk on $G$. 
	
	Note that for the free and wired effective resistance not to be equal, it is necessary that the graph is transient. As we will see below, one can produce a probabilistic representation for $R$ if its limit graph is recurrent.
\end{rem}

From now on assume that $\P^n_x$ converges weakly to $\P_x$. We use the notation $\P^n_x \Rightarrow \P_x$. By definition, this gives us
$$\lim_{n \to \infty} \E^n_x[ f  ]  = \E_x[ f ]$$
for all continuous and bounded $f: \Omega \to \R$. Note that, by the Portmanteau theorem, this can be extended to measurable and bounded $f: \Omega \to \R$ whose set of points of discontinuity is a null set of $\P_x$.
\begin{dfn}
	For $x,y \in V$, $x\neq y$, let $\Phi^{xy}: V^{\N_0} \to \R$ be defined by
	$$\Phi^{xy}(\omega) = \sum_{k=0}^{\tau_y-1} \mathds{1}_x(\omega_k).$$
	For $x = y$, let $\Phi^{xy} \equiv 0$.
\end{dfn}
$\Phi^{xy}$ is continuous only on $\menge{\tau_y < \infty}$ and not bounded. Hence, we can not directly apply weak convergence to obtain $\E^n_x[\Phi^{xy}] \to \E_x[\Phi^{xy}]$. To deal with the unboundedness, we first compute the distribution of $\Phi^{xy}$ with respect to $\P^n_x$.
\begin{lem}
	\label{lemma:PhiGeometricDistribution}
	For $x,y \in V$, $x \neq y$, and $n$ large enough such that $x,y \in V_n$, there exists $p_n \in [0,1]$ such that 
	$$\P_x^n[\Phi^{xy} = 0] = 0$$
	and
	$$\P^n_x[\Phi^{xy} = k] = (1-p_n)p_n^{k-1}$$
	for all $k \in \N^+$. More precisely, we have $p_n = \P^n_x[\tau_x^+ < \tau_y]$.
\end{lem}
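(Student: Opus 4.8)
The plan is to recognize $\Phi^{xy}$ as the number of visits the walk pays to $x$ strictly before it reaches $y$, and then to exploit the strong Markov property at the successive returns to $x$, which turns these visits into a sequence of independent Bernoulli trials.

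First I would dispose of the claim $\P^n_x[\Phi^{xy} = 0] = 0$. Since $\P^n_x[\omega_0 = x] = 1$ and $x \neq y$ forces $\tau_y \geq 1$, the term $\mathds{1}_x(\omega_0) = 1$ always contributes to the defining sum, so $\Phi^{xy} \geq 1$ almost surely and the mass at $0$ vanishes.

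Next I would introduce the successive visit times to $x$, namely $\sigma_0 := 0$ and $\sigma_{j+1} := \inf\{k > \sigma_j : \omega_k = x\}$, and observe that $\Phi^{xy}$ is precisely the number of indices $j \geq 0$ with $\sigma_j < \tau_y$. Consequently $\{\Phi^{xy} \geq k\} = \{\sigma_{k-1} < \tau_y\}$. Writing $p_n := \P^n_x[\tau^+_x < \tau_y]$, the crucial step is to prove $\P^n_x[\sigma_{k-1} < \tau_y] = p_n^{\,k-1}$ by induction on $k$. The base case $k = 1$ is immediate because $\sigma_0 = 0 < \tau_y$. For the inductive step I would condition on the $\mathcal{F}_{\sigma_{k-1}}$-measurable event $\{\sigma_{k-1} < \tau_y\}$ and apply the strong Markov property at the stopping time $\sigma_{k-1}$: on this event the walk sits at $x$ and has not yet seen $y$, so the reset property identifies the conditional probability of a further visit to $x$ before $y$ with $\P^n_x[\tau^+_x < \tau_y] = p_n$. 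This gives $\P^n_x[\sigma_k < \tau_y] = p_n \cdot \P^n_x[\sigma_{k-1} < \tau_y]$ and closes the induction. Taking the difference then yields $\P^n_x[\Phi^{xy} = k] = p_n^{\,k-1} - p_n^{\,k} = (1-p_n)p_n^{\,k-1}$, the asserted geometric law on $\N^+$.

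I expect the only delicate point to be the rigorous application of the strong Markov property at the random times $\sigma_{k-1}$, in particular verifying that $\{\sigma_{k-1} < \tau_y\}$ is determined by the trajectory up to time $\sigma_{k-1}$ so that the post-$\sigma_{k-1}$ path is, conditionally, an independent copy of the walk started at $x$. Since $G_n$ is finite and connected, recurrence guarantees $\tau_y < \infty$ almost surely and hence $p_n < 1$, so that the resulting distribution is a genuine probability distribution summing to one.
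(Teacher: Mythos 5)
Your argument is correct and matches the route the paper indicates: the paper's proof is only the remark that the claim follows by ``induction over $k$ and the Markov property of $\P^n_x$'', and your induction on the successive visit times $\sigma_j$ to $x$, with the strong Markov property applied at $\sigma_{k-1}$ to get $\P^n_x[\sigma_k < \tau_y] = p_n \cdot \P^n_x[\sigma_{k-1} < \tau_y]$, is precisely that computation written out in full. The identification $\{\Phi^{xy} \geq k\} = \{\sigma_{k-1} < \tau_y\}$ and the final differencing are sound, so nothing is missing.
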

\begin{proof}
	The proof is a straightforward computation using induction over $k$ and the Markov property of $\P^n_x$. \qed
\end{proof}
Knowing the exact distribution of $\Phi^{xy}$, we can compute its first and second moment with respect to $\P^n_x$. Because all graphs $(V,c_n)$ are connected and finite, we have $p_n < 1$ and $\P_x^n[\tau_y < \infty] = 1$ for all $n \in \N$. Hence, we see that
\begin{equation}
\label{eq:FirstMomentPhi}
\E^n_x[ \Phi^{xy} ] = \sum_{k=1}^{\infty} k (1-p_n)p_n^{k-1} = \frac{1}{1-p_n}~.
\end{equation}
For the second moment, we have
\begin{equation}
\label{eq:SecondMomentPhi}
\E_x^n\left[ (\Phi^{xy})^2 \right] = \sum_{k=1}^{\infty} k^2 (1-p_n)p_n^{k-1} = \frac{1+p_n}{(1-p_n)^2}~.
\end{equation}
\begin{rem}
If we inserts (\ref{eq:FirstMomentPhi}) into (\ref{eq:EffectiveResistanceProbabilisticSequence}), we get 
$$R_n(x,y) = \frac{1}{(c_n)_x \P^n_x[\tau_y \leq \tau_x^+]}~.$$
This is a well-known formula for the effective resistance of finite graphs (see \cite[Section 2.3]{LyonsPeres2015}) and is often written in the form
$$R_n(x,y) = \frac{1}{c_x \P^n_x[x \to y]}~.$$
\end{rem}
\begin{theorem}	
	\label{theorem:ProbabilisticRepresentationMetricInfinite}
	Let $(V,R)$ be a countably infinite resistance metric space, $(V_n)_{n \in \N}$ a finite exhaustion of $V$, $G_n = (V_n, c_n)$ finite graphs with effective resistance $R \restr_{V_n}$ and $G_R = (V, c)$ be the associated limit graph. If
	$$c_v = \lim_{n \to \infty} (c_n)_v < \infty$$ 
	for all $v \in V$, and the random walk of $G_R$ is recurrent, then, for $x,y \in V$, we have
	$$R(x,y) = \frac{1}{c_x} \E_x\left[\sum_{k=0}^{\tau_y - 1} \mathds{1}_x(\omega_k) \right].$$
\end{theorem}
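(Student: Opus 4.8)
The plan is to pass to the limit $n \to \infty$ in the finite-graph representation (\ref{eq:EffectiveResistanceProbabilisticSequence}). Fix $x,y \in V$; the case $x = y$ is trivial since $\Phi^{xy}\equiv 0$ and $R(x,x)=0$, so assume $x \neq y$. For all $n$ large enough that $x,y \in V_n$, (\ref{eq:EffectiveResistanceProbabilisticSequence}) reads $R(x,y) = (c_n)_x^{-1}\,\E_x^n[\Phi^{xy}]$, and since the left-hand side does not depend on $n$ while $(c_n)_x \to c_x$ by assumption, I first deduce $\E_x^n[\Phi^{xy}] = (c_n)_x R(x,y) \to c_x R(x,y) < \infty$. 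The goal is thus reduced to showing $\E_x^n[\Phi^{xy}] \to \E_x[\Phi^{xy}]$, from which $R(x,y) = c_x^{-1}\E_x[\Phi^{xy}]$ is immediate.

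By Theorem \ref{theorem:WeakLimitIsRandomWalkOnLimitGraph} we have $\P_x^n \Rightarrow \P_x$, but $\Phi^{xy}$ is unbounded and continuous only on $\menge{\tau_y < \infty}$, so weak convergence cannot be applied to it directly; this is the main obstacle. Recurrence of the random walk on $G_R$ is what resolves it: recurrence forces $\P_x[\tau_y < \infty] = 1$, so $\Phi^{xy}$ is $\P_x$-a.s.\ finite, and on $\menge{\tau_y < \infty}$ the functional is locally constant---if $\omega'$ agrees with $\omega$ on the coordinates $0,\ldots,\tau_y(\omega)$, then $\omega'_k \neq y$ for $k < \tau_y(\omega)$ and $\omega'_{\tau_y(\omega)} = y$, whence $\tau_y(\omega') = \tau_y(\omega)$ and $\Phi^{xy}(\omega') = \Phi^{xy}(\omega)$---and therefore continuous there. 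Hence the discontinuity set of $\Phi^{xy}$ lies inside the $\P_x$-null set $\menge{\tau_y = \infty}$, and the continuous mapping theorem for almost-everywhere continuous maps yields that the law of $\Phi^{xy}$ under $\P_x^n$ converges weakly to its law under $\P_x$.

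To upgrade convergence in distribution to convergence of expectations I would establish uniform integrability through a uniform second-moment bound. From (\ref{eq:FirstMomentPhi}) and the first paragraph, $(1-p_n)^{-1} = \E_x^n[\Phi^{xy}] \to c_x R(x,y) < \infty$, so $p_n \to p := 1 - (c_x R(x,y))^{-1}$, and recurrence entering a second time guarantees $p < 1$. Substituting into (\ref{eq:SecondMomentPhi}) gives $\E_x^n[(\Phi^{xy})^2] = (1+p_n)(1-p_n)^{-2} \to (1+p)(1-p)^{-2} < \infty$, so the second moments are bounded uniformly in $n$. A uniform second-moment bound implies that the family $\Phi^{xy}$ (under $\P_x^n$) is uniformly integrable, and uniform integrability together with the weak convergence of laws from the previous paragraph gives $\E_x^n[\Phi^{xy}] \to \E_x[\Phi^{xy}]$ by the Vitali-type convergence criterion.

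Combining the two computed limits of $\E_x^n[\Phi^{xy}]$ forces $\E_x[\Phi^{xy}] = c_x R(x,y)$, which is precisely the asserted identity $R(x,y) = c_x^{-1}\,\E_x[\sum_{k=0}^{\tau_y-1}\mathds{1}_x(\omega_k)]$. Beyond routine bookkeeping, the one point demanding care is the use of recurrence to secure $\P_x[\tau_y < \infty] = 1$, since this single fact underlies both the $\P_x$-a.s.\ continuity of $\Phi^{xy}$ and the strict inequality $p < 1$ that keeps the limiting second moment finite; everything else is a standard application of the continuous mapping theorem and uniform integrability.
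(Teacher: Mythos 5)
Your argument is correct and is essentially the paper's proof: both rest on the geometric law of $\Phi^{xy}$ giving the uniform second-moment bound $\E_x^n[(\Phi^{xy})^2]\leq K$, on recurrence forcing $\P_x[\tau_y<\infty]=1$ so that $\Phi^{xy}$ is $\P_x$-a.s.\ continuous, and on combining these with $\P_x^n\Rightarrow\P_x$; your appeal to uniform integrability is just a repackaging of the paper's explicit truncation $\Phi^T=\min(\Phi,T)$ and $\nicefrac{\varepsilon}{3}$-argument. One small mislabel: the bound $p<1$ does not come from recurrence but simply from $c_xR(x,y)<\infty$, which you had already established from the hypothesis $c_x<\infty$ and the finiteness of the metric $R$.
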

\begin{proof}
	For $x = y$, both $R(x,y)$ and the equation's right-hand side vanish. 
	
	Now assume $x \neq y$. By Theorem \ref{theorem:WeakLimitIsRandomWalkOnLimitGraph}, we know that $\P_x^n \Rightarrow \P_x$.
	For simplicity, let $\Phi:= \Phi^{xy}$. Throughout this proof assume that all $n$ are large enough such that $x,y \in V_n$. By (\ref{eq:EffectiveResistanceProbabilisticSequence}) and (\ref{eq:FirstMomentPhi}), we have
	$$(c_n)_x \cdot R(x,y) = \E_x^n [ \Phi ] = \frac{1}{1-p_n}~.$$
	Since $\lim_{n \to \infty} (c_n)_x < \infty$, it follows that $p_n$ is uniformly bound away from $1$ and thus
	$$\E^n_x[ \Phi^2 ] = \frac{1+p_n}{(1-p_n)^2} \leq K$$
	for some $K > 0$ and all sufficiently large $n$. Hence, for $T > 0$, we have
	$$\int_{\menge{\Phi > T}} \Phi ~ \text{d}\P^n_x \leq \frac{1}{T} \E_x^n[ \Phi^2 ] \leq \frac{K}{T}~.$$
	
	Let $\varepsilon > 0$. We define $\Phi^T(\omega) = \min(\Phi(\omega), T))$. Then, $\Phi^T \nearrow \Phi$ point-wise as $T \to \infty$. By the Theorem of monotone convergence, there exists $T_1$ such that 
	$$\E_x\left[\Phi - \Phi^T\right] \leq \frac{\varepsilon}{3}$$
	for all $T \geq T_1$. For $T \geq \max(T_1, \nicefrac{3K}{\varepsilon})$, we then also have
	$$\E_x^n\left[ \Phi - \Phi^T \right] = \int_{\menge{\Phi > T}} \Phi - T ~ \text{d}\P^n_x \leq \int_{\menge{\Phi > T}} \Phi ~ \text{d}\P^n_x \leq \frac{K}{T} \leq \frac{\varepsilon}{3}$$
	for all $n$. Since $\P_x$ is recurrent, we have $\P_x[\tau_y < \infty] = 1$. Hence, $\Phi^T$ is continuous $\P_x$-almost everywhere and bounded. It follows that $\E_x^n[\Phi^T] \rightarrow \E_x[\Phi^T]$ by $\P_x^n \Rightarrow \P_x$ and thus there exists $n_0 \in \N$ (dependent on $T$) such that
	$$\left\lvert \E_x\left[ \Phi^T \right]- \E^n_x\left[ \Phi^T \right] \right\rvert \leq \frac{\varepsilon}{3}$$
	for all $n \geq n_0$. Hence, 
	$$\left\lvert \E_x [\Phi] - \E^n_x[\Phi] \right\rvert \leq \left\lvert \E_x[\Phi - \Phi^T] \right\rvert + \left\lvert \E_x[\Phi^T] - \E^n_x[\Phi^T]\right\rvert  + \left\lvert \E^n_x[\Phi - \Phi^T ]\right\rvert \leq \varepsilon.$$
	It follows that $\E^n_x[\Phi] \to \E_x[\Phi]$ as $n \to \infty$. The claim follows from $\lim_{n\to\infty} (c_n)_x = c_x$ and
	$$R(x,y) = \frac{1}{(c_n)_x} \E^n_x\left[ \Phi^{xy} \right]$$
	by (\ref{eq:PotentialTetali}).\qed
\end{proof}
\begin{rem}
Combining Theorem 1.31 and Lemma 2.61 in \cite{Barlow2017}, it follows that the effective resistance of $G_R$ equals
\begin{equation}
\label{eq:ProbalisticRepresentationBarlow}
R_{G_R}(x,y) =  \frac{1}{c_x \P_x[\tau_y \leq \tau_x^+]} = \E_x\left[\sum_{k=0}^{\tau_y - 1} \mathds{1}_x(\omega_k) \right]
\end{equation}
if $G_R$ is recurrent. This differs from the above statement because it is not known that $R$ is the effective resistance of $G_R$. However, it can be used to identify $R$ as the effective resistance of $G_R$.
\end{rem}
\begin{cor}
\label{cor:EffResOfRecurrentLimitGraph}
Assume the same situation as in Theorem \ref{theorem:ProbabilisticRepresentationMetricInfinite}. If
$$c_v = \lim_{n \to \infty} (c_n)_v < \infty$$
holds for all $v \in V$, and $G_R$ is recurrent, then $R$ is the effective resistance of $G_R$.
\end{cor}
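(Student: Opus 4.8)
The plan is to observe that, under the hypotheses of the corollary, both $R$ and the effective resistance of $G_R$ admit the \emph{same} probabilistic representation in terms of the random walk $\P_x$ on the limit graph $G_R$; the identity $R = R_{G_R}$ then reduces to matching two formulas that are already available.

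First I would apply Theorem \ref{theorem:ProbabilisticRepresentationMetricInfinite}, whose assumptions are precisely those imposed here, namely $c_v = \lim_{n\to\infty}(c_n)_v < \infty$ for all $v \in V$ together with the recurrence of $G_R$. This gives, for all $x,y \in V$,
$$R(x,y) = \frac{1}{c_x}\E_x\left[\sum_{k=0}^{\tau_y-1}\mathds{1}_x(\omega_k)\right],$$
where $\P_x$ is the random walk of $G_R$ identified as the weak limit of $(\P_x^n)_{n\in\N}$ in Theorem \ref{theorem:WeakLimitIsRandomWalkOnLimitGraph}. Next I would invoke the representation recorded in the remark preceding the corollary, which is obtained from \cite{Barlow2017}: since $G_R$ is recurrent and locally finite (the latter guaranteed by $c_x < \infty$), its effective resistance satisfies
$$R_{G_R}(x,y) = \frac{1}{c_x\,\P_x[\tau_y \leq \tau_x^+]} = \frac{1}{c_x}\E_x\left[\sum_{k=0}^{\tau_y-1}\mathds{1}_x(\omega_k)\right],$$
the second equality following because $\Phi^{xy}$ is geometrically distributed under $\P_x$ exactly as in Lemma \ref{lemma:PhiGeometricDistribution}, so that its mean equals $\P_x[\tau_y \leq \tau_x^+]^{-1}$. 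Comparing the two displays, and noting that for $x=y$ both quantities vanish, yields $R(x,y) = R_{G_R}(x,y)$ for all $x,y\in V$; that is, $R$ is the effective resistance of $G_R$.

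The one point that genuinely uses recurrence, and which I would flag explicitly, is that ``the effective resistance of $G_R$'' must be an unambiguous object in the first place. On an infinite graph the free and wired effective resistances $R^F$ and $R^W$ can differ, and the earlier example of the graph on $\Z$ shows this actually happens when $G_R$ is transient; for a recurrent graph the two coincide, so recurrence is exactly what makes $R_{G_R}$ well defined and the comparison above meaningful. Beyond this I expect no serious obstacle: the substantive analytic work, the weak-convergence limit passage, the uniform integrability supplied by the second moment of $\Phi^{xy}$, and the identification of the weak limit as the walk on $G_R$, has all been front-loaded into Theorems \ref{theorem:WeakLimitIsRandomWalkOnLimitGraph} and \ref{theorem:ProbabilisticRepresentationMetricInfinite}, so the corollary is a direct combination of these two results with the cited formula of \cite{Barlow2017}.
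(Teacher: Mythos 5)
Your proposal is correct and follows exactly the route the paper intends: the corollary is left as an immediate consequence of Theorem \ref{theorem:ProbabilisticRepresentationMetricInfinite} combined with the representation of $R_{G_R}$ for recurrent graphs from \cite{Barlow2017} recorded in the preceding remark, which is precisely the comparison you carry out. Your additional observations (the geometric distribution giving $\E_x[\Phi^{xy}] = \P_x[\tau_y \leq \tau_x^+]^{-1}$, and recurrence forcing $R^F = R^W$ so that ``the'' effective resistance of $G_R$ is unambiguous) are consistent with the paper and only make explicit what it leaves implicit.
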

\begin{exa}
Our last example will show that even if $R^F = R^W$ on a transient graph, one can not expect to find a probabilistic representation of the effective resistance as in (\ref{eq:ProbalisticRepresentationBarlow}). This seems to contradict the statement of Corollary 3.13 combined with Corollary 3.15 in \cite{JorgensenPearse2009} which claims that
$$R^F(x,y) = \frac{1}{c_x \P_x[\tau_y < \tau_x^+]}~$$
on any transient graph.

Consider the graph $\mathcal{T}$ shown in Figure \ref{fig:TGraph}. It is transient and has only constant harmonic functions which implies $R^F = R^W$. Furthermore, we have $R^F(B,T) = 2$. However, 
\begin{align*}
\P_B[\tau_T < \tau_B^+] & = \P_0[\tau_T < \tau_B]\\
& = 1 - \P_0[\tau_B \leq \tau_T]\\
& = 1 - \P_0[\tau_B < \tau_T] - \P_0[\tau_B = \tau_T = \infty].
\end{align*}
Due to the symmetry of $\mathcal{T}$ we have $ \P_0[\tau_B < \tau_T] =  \P_0[\tau_T < \tau_B]$. Together with the transience of $\mathcal{T}$, this implies
$$\P_B[\tau_T < \tau_B^+] = \P_0[\tau_T < \tau_B] = \frac{1 -  \P_0[\tau_B = \tau_T= \infty]}{2} < \frac{1}{2}$$
and
$$\P_B[\tau_T \leq \tau_B^+] = \P_0[\tau_T \leq \tau_B] = \frac{1 +  \P_0[\tau_B = \tau_T= \infty]}{2} > \frac{1}{2}.$$
More precisely, one can compute 
$$\P_B[\tau_T < \tau_B^+] = \frac{2}{5} \text{ and } \P_B[\tau_T \leq \tau_B^+] = \frac{3}{5}~.$$
Hence,
$$\frac{1}{c_B \P_B[\tau_T < \tau_B^+]} \neq R^F(B,T)$$
and
$$\frac{1}{c_B}\E_B\left[\sum_{k=0}^{\tau_T - 1} \mathds{1}_B(\omega_k) \right] = \frac{1}{c_B\P_B[\tau_T \leq \tau_B^+]} \neq R^F(B,T).$$
\begin{figure}
	\centering
	\begin{tikzpicture}[-,auto, node distance = 1.2cm, every loop/.style={}, vertex/.style={draw, circle, fill=black!10, inner sep=0mm, minimum size=6mm}]
	\node[vertex] (B1) {$B$};
	\node[vertex, above of= B1] (A) {$0$};
	\node[vertex, above of= A] (T) {$T$};
	\node[vertex, right of =A] (B) {$1$};
	\node[vertex, right of=B] (C) {$2$};
	\node[vertex, right of=C] (D) {$3$};
	\node[vertex, right of=D] (E) {$4$};
	\node[right of=E] (F) {$\ldots$};
	
	\path 	(B1) edge node {$1$} (A)
	(T) edge node[left] {$1$} (A)
	(A) edge node {$1$} (B)
	(B) edge node {$2$} (C)
	(C) edge node {$4$} (D)
	(D) edge node {$8$} (E)
	(E) edge node {$16$} (F);
	\end{tikzpicture}
	\caption{\label{fig:TGraph}The transient graph $\mathcal{T}$}
\end{figure}
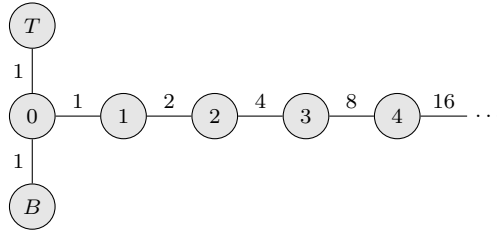	
\end{exa}

\section{Acknowledgements}
The author would like to thank Max von Renesse, Stefan Bachmann, Stefan Dück and Erik Paul for many helpful discussions.

\bibliographystyle{abbrv}
\bibliography{refs_graphs1}

\begin{thebibliography}{10}

\bibitem{AldousFill2002}
D.~Aldous and J.~Fill.
\newblock {\em Reversible Markov chains and random walks on graphs}.
\newblock Berkeley, 2002.

\bibitem{Barlow2017}
M.~T. Barlow.
\newblock {\em Random walks and heat kernels on graphs}, volume 438 of {\em
  London Mathematical Society Lecture Note Series}.
\newblock Cambridge University Press, Cambridge, 2017.

\bibitem{Chandra1997}
A.~K. Chandra, P.~Raghavan, W.~L. Ruzzo, R.~Smolensky, and P.~Tiwari.
\newblock The electrical resistance of a graph captures its commute and cover
  times.
\newblock {\em Computational Complexity}, 6(4):312--340, 1997.

\bibitem{Chung1997}
F.~R.~K. Chung.
\newblock {\em {Spectral graph theory}}, volume~92 of {\em {CBMS Regional
  Conference Series in Mathematics}}.
\newblock American Mathematical Society, Providence, RI, 1997.

\bibitem{DorflerBullo2013}
F.~Dorfler and F.~Bullo.
\newblock Kron reduction of graphs with applications to electrical networks.
\newblock {\em IEEE Transactions on Circuits and Systems I: Regular Papers},
  60(1):150--163, Jan 2013.

\bibitem{DoyleSnell1984}
P.~G. Doyle and J.~L. Snell.
\newblock {\em Random walks and electric networks}, volume~22 of {\em Carus
  Mathematical Monographs}.
\newblock Mathematical Association of America, Washington, DC, 1984.

\bibitem{JorgensenPearse2009}
P.~E.~T. Jorgensen and E.~P.~J. Pearse.
\newblock A {H}ilbert space approach to effective resistance metric.
\newblock {\em Complex Analysis and Operator Theory}, 4(4):975--1013, nov 2009.

\bibitem{Kasue2010}
A.~Kasue.
\newblock Convergence of metric graphs and energy forms.
\newblock {\em Rev. Mat. Iberoam.}, 26(2):367--448, 2010.

\bibitem{Kigami2001}
J.~Kigami.
\newblock {\em Analysis on fractals}, volume 143 of {\em Cambridge Tracts in
  Mathematics}.
\newblock Cambridge University Press, Cambridge, 2001.

\bibitem{Klein1993}
D.~J. Klein and M.~Randi{\'{c}}.
\newblock Resistance distance.
\newblock {\em Journal of Mathematical Chemistry}, 12(1):81--95, 1993.

\bibitem{Kumagai2014}
T.~Kumagai.
\newblock {\em Random walks on disordered media and their scaling limits},
  volume 2101 of {\em Lecture Notes in Mathematics}.
\newblock Springer, Cham, 2014.
\newblock Lecture notes from the 40th Probability Summer School held in
  Saint-Flour, 2010, \'Ecole d'\'Et\'e de Probabilit\'es de Saint-Flour.

\bibitem{LyonsPeres2015}
R.~Lyons and Y.~Peres.
\newblock {\em Probability on Trees and Networks}.
\newblock Cambridge University Press, New York, 2016.
\newblock Available at \url{http://pages.iu.edu/~rdlyons/}.

\bibitem{Soardi1994}
P.~M. Soardi.
\newblock {\em Potential Theory on Infinite Networks}.
\newblock Springer, 1994.

\bibitem{Spielman2007}
D.~A. Spielman.
\newblock Spectral graph theory and its applications.
\newblock In {\em Foundations of Computer Science, 2007. FOCS'07. 48th Annual
  IEEE Symposium on}, pages 29--38. IEEE, 2007.

\bibitem{Tetali1991}
P.~{Tetali}.
\newblock {Random walks and the effective resistance of networks.}
\newblock {\em {J. Theor. Probab.}}, 4(1):101--109, 1991.

\bibitem{Tutte2001}
W.~T. Tutte.
\newblock {\em Graph Theory}.
\newblock Cambridge University Press, 2001.

\end{thebibliography}

\end{document}